\documentclass[12pt]{amsart}

\usepackage{amsmath}
\usepackage{amsmath,amsfonts,amssymb,amsthm}
\usepackage{graphicx,color}

\usepackage{hyperref}
\DeclareMathRadical{\sqrtsign}{symbols}{"70}{largesymbols}{"70}
%

%
%
%
%


%
\providecommand{\abs}[1]{\lvert#1\rvert}

%
%
\newlength{\figboxwidth}             
\setlength{\figboxwidth}{5.8in}

%

\setlength{\textwidth}{6.0in}
\setlength{\oddsidemargin}{0.25in}
\setlength{\evensidemargin}{0.25in}


%

%




\newcommand{\grad}{\nabla}
\newcommand{\infinity}{\infty}










\def\@ifundefined#1#2#3%
  {\expandafter\ifx\csname#1\endcsname\relax#2\else#3\fi}

\@ifundefined{theoremstyle}{
}{
\theoremstyle{plain} 
}
\newtheorem{theorem}{Theorem}[section]

\newtheorem{proposition}[theorem]{Proposition}
\newtheorem{lemma}[theorem]{Lemma}

\newtheorem{corollary}[theorem]{Corollary}

\@ifundefined{theoremstyle}{
}{
\theoremstyle{definition} 
}
\newtheorem{definition}[theorem]{Definition}

\newtheorem{remark}[theorem]{Remark}


%

\mathchardef\GG="321D
%



%


\newcommand{\mcc}[1]{{}}

\numberwithin{equation}{section}


\title[Concentration of dimension in the Lagrange spectrum]
{Concentration of dimension in the Lagrange spectrum}

\author{Christian Camilo Silva Villamil}

\address{Christian Camilo Silva Villamil: SUSTech International Center for Mathematics, P. R. China.
}
\email{ccsilvav@sustech.edu.cn}

\keywords{Hausdorff dimension, horseshoes, Lagrange spectrum, surface diffeomorphisms}

\begin{document}

\begin{abstract}
Let $\varphi$ be a smooth conservative diffeomorphism of a compact surface $S$ and let $\Lambda$ be a mixing horseshoe of $\varphi$. Given a smooth real function $f$ defined on $S$, we define for points $\eta$ in the unstable Cantor set of the pair $(\varphi,\Lambda)$, a generalization, $k_{\varphi,\Lambda,f}(\eta)$, of the best constant of Diophantine approximation for irrational numbers. We study the set of points $\eta$ for which the sets $k_{\varphi,\Lambda,f}^{-1}((-\infty,\eta])$ and $k_{\varphi,\Lambda,f}^{-1}(\eta)$ have the same Hausdorff dimension and when the Hausdorff dimension of $\Lambda$ is less than one, we describe generically the local Hausdorff dimension of the dynamical Lagrange spectrum, $\mathcal{L}_{\varphi,\Lambda,f}$, restricted to this set of points. Finally, we recover the same results for the classical Lagrange spectra.     

\end{abstract}

\maketitle

\tableofcontents

\section{Introduction}
\subsection{Classical spectra}\label{ss.classical-Markov-Lagrange}
The classical Lagrange and Markov spectra are closed subsets of the real line related to Diophantine approximations. They arise naturally in the study of rational approximations of irrational numbers and of indefinite binary quadratic forms, respectively. 

Given $\alpha\in\mathbb R\setminus\mathbb Q$, set
\begin{eqnarray*}
	k(\alpha)&=&\sup \left\{k>0:\left|\alpha -\frac{p}{q}\right|<\frac{1}{kq^2} \ \mbox{has infinitely many rational solution} \ \frac{p}{q}\right \}\\&
	=&\limsup_{p\in \mathbb{Z},q\in \mathbb{N}, p,q\to \infty}|q(q\alpha-p)|^{-1}\in \mathbb{R}\cup \{\infty\}
\end{eqnarray*}
for the best constant of Diophantine approximations of $\alpha$. The {\it classical Lagrange spectrum} is the set 
$$\mathcal{L}=\{k(\alpha): \alpha \in \mathbb{R}\setminus\mathbb{Q}\ \text{and}\ k(\alpha)<\infty\}.$$
The study of the structure of $\mathcal{L}$ is a classical subject which began with
Markov that showed in \cite{M79} that $\mathcal{L}\cap(-\infty, 3)=\{\sqrt{9-4/z_n^2}:n\in \mathbb{N}\}$ where $z_n$ are the \emph{Markov numbers}, that is, the largest coordinate of a triple $(x_n,y_n,z_n)\in \mathbb{N}^3$ verifying the Markov equation $x_n^2+y_n^2+z_n^2=3x_ny_nz_n.$ And then that $3$ is the first accumulation point of the Lagrange spectrum.

Similarly, given a real quadratic form $q(x,y)=ax^2+bxy+cy^2$, let $\Delta(q)=b^2-4ac$ its discriminant. Another interesting set is the {\it classical Markov spectrum}, defined by
$$\mathcal{M}:=\left\{\frac{\sqrt{\Delta(q)}}{\inf\limits_{(x,y)\in\mathbb{Z}^2\setminus\{(0,0)\}} |q(x,y)|} < \infty: q\ \text{is indefinite and}\ \Delta(q)>0\right\}.$$ 

Both the Lagrange and Markov spectra have a dynamical interpretation given by Perron in \cite{P}. This fact is an important motivation for our work: Given a bi-infinite sequence $\theta=(\theta_n)_{n\in\mathbb{Z}}\in\mathbb{N}^{\mathbb{Z}}$, let $\sigma(\theta) = (\theta_{n+1})_{n\in\mathbb{Z}}$ and 
$$f(\theta)=[0;\theta_1,\theta_2,\dots]+\theta_0+[0;\theta_{-1}, \theta_{-2},\dots].$$ 
If the Markov value $m(\theta)$ of $\theta$ is $m(\theta)=\sup\limits_{i\in\mathbb{Z}} f(\sigma^i(\theta))$ and the Lagrange value $\ell(\theta)$ is $\ell(\theta)=\limsup \limits_{i\to \infty} f(\sigma^i(\theta)).$ Then the Lagrange spectrum is the set
$$\mathcal{L}=\{\ell(\theta)<\infty: \theta\in\mathbb{N}^{\mathbb{Z}}\}$$
and the Markov spectrum is the set
$$\mathcal{M}=\{m(\theta)<\infty: \theta\in\mathbb{N}^{\mathbb{Z}}\}.$$

It follows from these characterizations that $\mathcal{L}$ and $\mathcal{M}$ are closed subsets of $\mathbb R$ and that $\mathcal{L}\subset \mathcal{M}$. This last interpretation, in terms of $\sigma$ and $f$, of the Lagrange and Markov spectra, admits a natural generalization in the context of hyperbolic dynamics as we will see in the next section.

Moreira in \cite{M3} proved several results on the geometry of the Markov and Lagrange spectra, for example, that the map $D:\mathbb{R} \rightarrow [0,1)$, given by
$$D(\eta)=HD(k^{-1}(-\infty,\eta))=HD(k^{-1}(-\infty,\eta])$$
is continuous, surjective and $\max\{t\in \mathbb{R}:D(t)=0\}=3$. Also that 
\begin{equation}\label{e2}
L(\eta):=HD(\mathcal{L}\cap(-\infty,\eta))= HD(\mathcal{M}\cap(-\infty,\eta))=\min \{1,2D(\eta)\}
\end{equation}
and
$$\lim_{\eta\rightarrow \infty}HD(k^{-1}(\eta))=1.$$
In the mentioned paper, it is also asked if the restriction of the function $L^{\text{loc}}:\mathcal{L}\rightarrow \mathbb{R}$ given by $L^{\text{loc}}(t)=\lim \limits_{\epsilon\to 0^{+}}HD(\mathcal{L}\cap (t-\epsilon,t+\epsilon))$ to $\mathcal{L}^{'}$ is non-decreasing.
\subsection{Dynamical spectra}
Let $\varphi:S\rightarrow S$ be a diffeomorphism of a $C^{\infty}$ compact surface $S$ with a mixing horseshoe $\Lambda$ and let $f:S\rightarrow \mathbb{R}$ be a differentiable function. Following the above characterization of the classical spectra, we define the maps $\ell_{\varphi,f}: \Lambda \rightarrow \mathbb{R}$ and $m_{\varphi,f}: \Lambda \rightarrow \mathbb{R}$ given by $\ell_{\varphi,f}(x)=\limsup\limits_{n\to \infty}f(\varphi^n(x))$ and $m_{\varphi,f}(x)=\sup\limits_{n\in\mathbb{Z}}f(\varphi^n(x))$ for $x\in \Lambda$ and call $\ell_{\varphi,f}(x)$ the \textit{Lagrange value} of $x$ associated to $f$ and $\varphi$ and also $m_{\varphi,f}(x)$ the \textit{Markov value} of $x$ associated to $f$ and $\varphi$. The sets
$$\mathcal{L}_{\varphi,\Lambda,f}=\ell_{\varphi,f}(\Lambda)=\{\ell_{\varphi,f}(x):x\in \Lambda\}$$
and
$$\mathcal{M}_{\varphi,\Lambda, f}=m_{\varphi,f}(\Lambda)=\{m_{\varphi,f}(x):x\in \Lambda\}$$
are called \textit{Lagrange Spectrum} of $(\varphi,\Lambda,f)$ and \textit{Markov Spectrum} of $(\varphi,\Lambda,f)$.

In order to announce our main theorem, let us first fix a Markov partition $\{R_a\}_{a\in \mathcal{A}}$ with sufficiently small diameter consisting of rectangles $R_a \sim I_a^u \times I_a^s$ delimited by compact pieces $I_a^s$, $I_a^u$, of stable and unstable manifolds of certain points of $\Lambda$. The set $\mathcal{B}\subset \mathcal{A}^{2}$ of admissible transitions consist of pairs $(a,b)$ such that $\varphi(R_a)\cap R_{b}\neq \emptyset$; so, we can define the transition matrix $B$ by
$$b_{ab}=1 \ \ \text{if} \ \  \varphi(R_a)\cap R_b\neq \emptyset \ \ \text{and}  \ b_{ab}=0 \ \ \ \text{otherwise, for $(a,b)\in \mathcal{A}^{2}$.}$$

Let $\Sigma_{\mathcal{A}}=\left\{\underline{a}=(a_{n})_{n\in \mathbb{Z}}:a_{n}\in \mathcal{A} \ \text{for all} \ n\in \mathbb{Z}\right\}$ and consider the homeomorphism of $\Sigma_{\mathcal{A}}$, the shift, $\sigma:\Sigma_{\mathcal{A}}\to\Sigma_{\mathcal{A}}$ defined by $\sigma(\underline{a})_{n}=a_{n+1}$. Let $\Sigma_{\mathcal{B}}=\left\{\underline{a}\in \Sigma_{\mathcal{A}}:b_{a_{n}a_{n+1}}=1\right\}$, this set is closed and $\sigma$-invariant subspace of $\Sigma_{\mathcal{A}}$. Still denote by $\sigma$ the restriction of $\sigma$ to $\Sigma_{\mathcal{B}}$, the pair $(\Sigma_{\mathcal{B}},\sigma)$ is a subshift of finite type, see \cite{{Shub}} chapter 10. The dynamics of $\varphi$ on $\Lambda$ is topologically conjugate to the sub-shift $\Sigma_{\mathcal{B}}$, namely, there is a homeomorphism $\Pi: \Lambda \to \Sigma_{\mathcal{B}}$ such that $\varphi\circ \Pi=\Pi\circ \sigma$.

Recall that the stable and unstable manifolds of $\Lambda$ can be extended to locally invariant $C^{1+\alpha}$ foliations in a neighborhood of $\Lambda$ for some $\alpha>0$. Using these foliations it is possible define projections $\pi^u_a: R_a\rightarrow I^s_a\times \{i^u_a\}$ and $\pi^s_a: R_a\rightarrow \{i^s_a\} \times I^u_a$ of the rectangles into the connected components $I^s_a\times \{i^u_a\}$ and $\{i^s_a\} \times I^u_a$ of the stable and unstable boundaries of $R_a$, where $i^u_a\in \partial I^u_a$ and $i^s_a\in \partial I^s_a$ are fixed arbitrarily. 

Set $\pi^s(x)=\pi^u_a(x)$ and $\pi^u(x)=\pi^s_a(x)$ if $x\in R_a$. In this way, we have the stable and unstable Cantor sets
$$K^s:=\pi^s(\Lambda)=\bigcup_{a\in \mathcal{A}}\pi^u_a(\Lambda\cap R_a)$$ 
and
$$K^u:=\pi^u(\Lambda)=\bigcup_{a\in \mathcal{A}}\pi^s_a(\Lambda\cap R_a).$$
In fact $K^s$ and $K^u$ are $C^{1+\alpha}$ dynamically defined, associated to the expanding maps $\psi_s$ and $\psi_u$ defined by
$$\psi_s(\pi^s(y))=\pi^s(\varphi^{-1}(y))$$
and 
$$\psi_u(\pi^u(z))=\pi^u(\varphi(z)).$$
Usually, we will consider some subsets of $\Lambda$ through its projections on the unstable Cantor set $K^u$. Indeed, we will consider $\pi^u(X)$, where $X\subset \Lambda$ is compact and $\varphi$-invariant. 

Note that if $x_1,x_2\in \Lambda$ are such that $\pi^u(x_1)=\pi^u(x_2)$, then $x_1$ and $x_2$ belong to the same stable manifold, therefore  $\lim \limits_{n \to \infty} d(\varphi^{n}(x), \varphi^{n}(y))=0$ where $d$ is the metric on $S$, and then one has $\ell_{\varphi,f}(x_1)=\ell_{\varphi,f}(x_2)$ for any $f:S\rightarrow \mathbb{R}$ differentiable. Thus, given $x\in K^u$ we can define its {\it Lagrange value} as
$$k_{\varphi,\Lambda,f}(x):=\ell_{\varphi,f}(\tilde{x})=\limsup\limits_{n\to \infty}f(\varphi^n(\tilde{x}))$$
where $\tilde{x}\in \Lambda$ is arbitrary such that $\pi^u(\tilde{x})=x.$ It follows from the definition that $k_{\varphi,\Lambda,f}(K^u)=\mathcal{L}_{\varphi,\Lambda,f}.$ 

In this article, we are mainly interested in the study of the function $D_{\varphi,\Lambda,f}$ defined by
\begin{eqnarray*}
  D_{\varphi,\Lambda,f}:\mathbb{R} &\rightarrow& \mathbb{R} \\
   t &\mapsto& HD(k^{-1}_{\varphi,\Lambda,f}(-\infty,t)).
\end{eqnarray*}
and its relation with the functions given for $t\in \mathbb{R}$ by
\begin{enumerate}
    \item $L_{\varphi,\Lambda,f}(t)=HD(\mathcal{L}_{\varphi,\Lambda, f}\cap(-\infty, t)),$
    \item $M_{\varphi,\Lambda,f}(t)=HD(\mathcal{M}_{\varphi,\Lambda, f}\cap(-\infty, t)),$
    \item $L^{\text{loc}}_{\varphi,\Lambda,f}(t)=\lim\limits_{\epsilon\rightarrow0^+}HD(\mathcal{L}_{\varphi,\Lambda,f}\cap (t-\epsilon,t+\epsilon)).$ 
\end{enumerate}
The study of the continuity of the mentioned functions is closely related to the study of the behavior of the family of sets $\{\Lambda_t\}_{t\in \mathbb{R}}$, where for $t\in \mathbb{R}$
$$\Lambda_t=m_{\varphi, f}^{-1}((-\infinity,t])=\bigcap\limits_{n\in\mathbb{Z}}\varphi^{-n}(f|_{\Lambda}^{-1}((-\infinity,t])) = \{x\in\Lambda: \forall n\in \mathbb{Z}, \ f(\varphi^n(x))\leq t\}.$$
Define then, $R_{\varphi,\Lambda,f}(t)=HD(\Lambda_t).$

It turns out that dynamical Markov and Lagrange spectra associated to hyperbolic dynamics are closely related to the classical Markov and Lagrange spectra. Several results on the Markov and Lagrange dynamical spectra associated to horseshoes in dimension 2 which are analogous to previously known results on the classical spectra were obtained recently. We refer the reader to the book \cite{LMMR} for more information.

In our present work, it is important to mention that in \cite{GCD}, in the context of {\it conservative} diffeomorphism it is proven (as a generalization of the results in \cite{CMM16}) that for typical choices of the dynamic and of the function, the intersections of the corresponding dynamical Markov and Lagrange spectra with half-lines $(-\infty,t)$ have the same Hausdorff dimensions, and this defines a continuous function of $t$ whose image is $[0,\min \{1,\tau\}]$, where $\tau$ is the Hausdorff dimension of the horseshoe. 

Finally, in \cite{LM2} is showed that, for any $n\ge 2$ with $n\neq 3$, the initial segments of the classical spectra until $\sqrt{n^2+4n}$ (i.e., the intersection of the spectra with $(-\infty,\sqrt{n^2+4n}]$) are dynamical Markov and Lagrange spectra associated to a horseshoe $\Lambda(n)$ of some smooth conservative diffeomorphism $\varphi_n$ of $\mathcal{S}^2$ and to some smooth real function $f_n$. Also, in the same article is showed that they are naturally associated to continued fractions with coefficients bounded by $n$. Using this, in \cite{GC} it is proven that for any $t$ that belongs to the closure of the interior of the classical Markov and Lagrange spectra $D(t)=HD(k^{-1}(t))$ and that $D$ is strictly increasing when is restricted to the interior of the spectra. 

Here, in the dynamical setting (with applications to the classical one), we try to characterize the set of points $t\in \mathcal{L}_{\varphi,\Lambda,f}$ that verify $D_{\varphi,\Lambda,f}(t)=HD(k_{\varphi,\Lambda,f}^{-1}(t))$ and describe the properties of $D_{\varphi,\Lambda,f}$ when it is restricted to this set. Some results related with the function $L^{\text{loc}}_{\varphi,\Lambda,f}$ are also given.

\subsection{Statement of the main theorems} 

We write $\text{Diff}^{2}_{\omega}(S)$ for the set of conservative diffeomorphisms of $S$ with respect to a volume form $\omega$. Let $\varphi\in \text{Diff}^{2}_{\omega}(S)$ with a mixing horseshoe $\Lambda$ and for $x\in \Lambda$, let $e^s_x$ and $e^u_x$ unit vectors in the stable and unstable directions of $T_xS$. Given $r\geq 2$, set
$$\mathcal{R}^r_{\varphi, \Lambda}=\{f\in C^r(S,\mathbb{R}): \grad f(x) \textrm{ is not perpendicular neither to } e^s_x \textrm{  nor } e^u_x \textrm{ for all } x\in\Lambda\}$$ 
in other terms, $\mathcal{R}^r_{\varphi, \Lambda}$ is the open set of $C^r$-functions $f:S\to\mathbb{R}$ that are locally monotone along stable and unstable directions. 

Using the notations of the previous subsection, our main theorems are the following
 \begin{theorem}\label{T1}
   Let $\varphi\in \text{Diff}^{2}_{\omega}(S)$ with a mixing horseshoe $\Lambda$. For any $r\ge 2$ and $f\in \mathcal{R}^r_{\varphi,\Lambda}$, the function $D_{\varphi,\Lambda,f}$ is continuous with $D_{\varphi,\Lambda,f}(\mathbb{R})=[0,\frac{HD(\Lambda)}{2}]$   
   and one has a decomposition 
   $$\{t\in \mathcal{L}_{\varphi,\Lambda,f}:D_{\varphi,\Lambda,f}(t)>0\}=\mathcal{J}_{\varphi,\Lambda,f}\cup \mathcal{F}_{\varphi,\Lambda,f}\cup \tilde{\mathcal{J}}_{\varphi,\Lambda,f}$$ 
that satisfies 
\begin{itemize}
    \item $\mathcal{J}_{\varphi,\Lambda,f}\subset \mathcal{L}^{'}_{\varphi,\Lambda,f}$
    \item $D_{\varphi,\Lambda,f}|_{\mathcal{J}_{\varphi,\Lambda,f}}$ is strictly increasing,
    \item $D_{\varphi,\Lambda,f}(\mathcal{J}_{\varphi,\Lambda,f})=(0,\frac{HD(\Lambda)}{2}],$
    \item $D_{\varphi,\Lambda,f}(t)=HD(k_{\varphi,\Lambda,f}^{-1}(t))$ for every $t\in \mathcal{J}_{\varphi,\Lambda,f}$,
    \item $\mathcal{F}_{\varphi,\Lambda,f}$ is countable,
    \item $D_{\varphi,\Lambda,f}(t)\neq HD(k_{\varphi,\Lambda,f}^{-1}(t))$ for every $t\in \tilde{\mathcal{J}}_{\varphi,\Lambda,f}$.
\end{itemize}
Indeed, given $\eta\in(0,\frac{1}{2}HD(\Lambda)]$, define $\eta^-:=\min\{t\in \mathbb{R}:D_{\varphi,\Lambda,f}(t)=\eta\}$. Then, we can set
$$\mathcal{J}_{\varphi,\Lambda,f}=\{\eta^-:\eta\in(0,\frac{1}{2}HD(\Lambda)]\}.$$
\end{theorem}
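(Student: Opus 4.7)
The plan is to verify that
\[
\mathcal{J}_{\varphi,\Lambda,f}:=\{\eta^- : \eta\in(0,\tfrac12 HD(\Lambda)]\},\qquad \eta^-=\min\{t\in\mathbb{R}:D_{\varphi,\Lambda,f}(t)=\eta\},
\]
has the six bulleted properties, and then to define $\tilde{\mathcal{J}}_{\varphi,\Lambda,f}$ as $\{t\in\mathcal{L}_{\varphi,\Lambda,f}:D_{\varphi,\Lambda,f}(t)>0,\ D_{\varphi,\Lambda,f}(t)\neq HD(k_{\varphi,\Lambda,f}^{-1}(t))\}$ and to let $\mathcal{F}_{\varphi,\Lambda,f}$ be the remaining points; then the three sets automatically decompose $\{t\in\mathcal{L}_{\varphi,\Lambda,f}:D_{\varphi,\Lambda,f}(t)>0\}$.

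I first set up the monotone structure. The function $D:=D_{\varphi,\Lambda,f}$ is non-decreasing because $k^{-1}(-\infty,t)$ grows with $t$, and by the continuity assertion of the theorem (proved separately) it is continuous with image $[0,\tfrac12 HD(\Lambda)]$. Hence for every $\eta\in(0,\tfrac12 HD(\Lambda)]$ the level set $D^{-1}(\eta)$ is a closed interval and $\eta^-$ is well defined; strict monotonicity of $D|_{\mathcal{J}}$ and $D(\mathcal{J})=(0,\tfrac12 HD(\Lambda)]$ follow at once. To get $\mathcal{J}\subset\mathcal{L}'$ I argue by contradiction: if $\eta^-$ were isolated from the left in $\mathcal{L}_{\varphi,\Lambda,f}$, say $\mathcal{L}_{\varphi,\Lambda,f}\cap(\eta^--\delta,\eta^-)=\emptyset$, then $k^{-1}(-\infty,\eta^-)=k^{-1}(-\infty,\eta^--\delta)$, yielding $D(\eta^-)=D(\eta^--\delta)<\eta$ and contradicting the minimality defining $\eta^-$.

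The main step is the concentration identity $HD(k^{-1}(\eta^-))=\eta$. The upper bound is soft: since $k^{-1}(\eta^-)\subset\bigcap_{t>\eta^-}k^{-1}(-\infty,t)$, monotonicity of Hausdorff dimension and continuity of $D$ from the right give $HD(k^{-1}(\eta^-))\leq\inf_{t>\eta^-}D(t)=\eta$. The lower bound is the technical heart of the proof. Because $D(\eta^-)=\eta$ while $D(\eta^--\delta)<\eta$ for every $\delta>0$, each annular piece $A_\delta:=k^{-1}[\eta^--\delta,\eta^-)$ has Hausdorff dimension exactly $\eta$; yet $\bigcap_\delta A_\delta=\emptyset$, so a naive limit loses the dimension. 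To recover it, my plan is to apply the concentration-of-dimension machinery for dynamically defined Cantor sets developed earlier in the paper (extending the scheme of Moreira in \cite{M3} and of \cite{GCD}): the points of $A_\delta$ are parametrized by forward itineraries in a subshift of $(\Sigma_{\mathcal{B}},\sigma)$ whose admissible transitions become more constrained as $\delta\downarrow 0$, and compactness of the symbolic space together with the uniform expansion of $\psi_u$ will let me pass to a limit itinerary whose forward orbit satisfies $\limsup_n f(\varphi^n(\cdot))=\eta^-$ exactly; a mass-distribution estimate on the limiting sub-Cantor configuration will then give $HD(k^{-1}(\eta^-))\geq\eta$.

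Once this is in place, the countability of $\mathcal{F}_{\varphi,\Lambda,f}$ is bookkeeping: any $t\in\mathcal{F}$ satisfies $D(t)=HD(k^{-1}(t))=\eta$ for some $\eta$ but $t\neq\eta^-$, so $t$ lies in a maximal closed interval of constancy of $D$, and there are at most countably many such intervals because $D$ is monotone with bounded image; the concentration argument applied at the right endpoint of each such interval will show that at most one $t>\eta^-$ inside it can have $HD(k^{-1}(t))=\eta$, so $\mathcal{F}$ is countable. The hardest part, which I expect will absorb most of the work, is the lower bound in the concentration identity: transferring the dimension of $k^{-1}(-\infty,\eta^-)$, which is carried by a decreasing family of sets with empty intersection, onto the single level set $k^{-1}(\eta^-)$. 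Everything else reduces to the monotone/continuous structure of $D_{\varphi,\Lambda,f}$ already established.
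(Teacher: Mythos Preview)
Your proposal has two genuine gaps.

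\textbf{The lower bound $HD(k^{-1}(\eta^-))\geq\eta$.} Your plan to ``pass to a limit itinerary'' by compactness will not produce what you need. Limits of points in $A_\delta=k^{-1}[\eta^--\delta,\eta^-)$ can easily have $k$-value strictly below $\eta^-$, since $\limsup$ is only upper semicontinuous; and even if you obtain one point with $k=\eta^-$, you need an entire Cantor set of dimension arbitrarily close to $\eta$ inside that single level set. The paper's method is constructive and quite different: one first produces (using the subhorseshoe approximation of \cite{GCD} and a connection argument) a nested sequence of mixing subhorseshoes $\tilde\Lambda^0\subset\tilde\Lambda^1\subset\cdots$ with $\max f|_{\tilde\Lambda^n}\nearrow\eta^-$ and $HD(\tilde\Lambda^n)\to 2\eta$. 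Then one builds an explicit injection $\Theta:K^u(\tilde\Lambda^0)\to k^{-1}(\eta^-)$ by inserting, at factorially-spaced positions of each itinerary, short words that force the orbit to visit neighborhoods of the maxima of the $\tilde\Lambda^n$; the factorial spacing makes $\Theta^{-1}$ H\"older with exponent $1-\rho$ for every $\rho>0$, whence $HD(k^{-1}(\eta^-))\geq HD(K^u(\tilde\Lambda^0))$. A mass-distribution argument on a vague ``limiting sub-Cantor configuration'' does not substitute for this construction; the real issue is manufacturing points whose $\limsup$ hits $\eta^-$ \emph{exactly}, and that requires the nested-horseshoe scaffolding.

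\textbf{Countability of $\mathcal{F}$.} With your definitions, $\mathcal{F}=\{t:D(t)>0,\ D(t)=HD(k^{-1}(t))\}\setminus\mathcal{J}$, and you assert that in each plateau $[\eta^-,\eta^+]$ of $D$ there is at most one $t>\eta^-$ with $HD(k^{-1}(t))=\eta$. This is not justified, and there is no reason it should hold: nothing prevents many level sets $k^{-1}(t)$ with $t\in(\eta^-,\eta^+)$ from having full dimension $\eta$. The paper avoids this by defining an auxiliary set $\mathcal{J}^0$ via finite-type decompositions: $t\in\mathcal{J}^0$ iff for every $\epsilon>0$ the subhorseshoes of $M(t,\epsilon)$ with dimension $\geq R(t)$ have Lagrange values near $t$. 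One shows directly that outside $\mathcal{J}^0$ the concentration identity fails (so $\tilde{\mathcal{J}}:=\{D>0\}\setminus\mathcal{J}^0$), and one takes $\mathcal{F}$ to be the points of $\mathcal{J}^0$ that are isolated on the left in $\mathcal{J}^0$, which is automatically countable. The identification $\mathcal{J}^0\setminus\mathcal{F}=\mathcal{J}$ then comes from proving that $D$ is strictly increasing on $\mathcal{J}^0\setminus\mathcal{F}$ (again using the nested-horseshoe construction). Note that the theorem makes no claim about whether points of $\mathcal{F}$ satisfy the concentration identity, so your stronger definition of $\mathcal{F}$ is not required.
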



Now, fix $\varphi_0\in \text{Diff}^{2}_{\omega}(S)$ with a mixing horseshoe $\Lambda_0$ and let $\mathcal{U}$ a $C^{2}$-neighbourhood of $\varphi_0$ in $\text{Diff}^{2}_{\omega}(S)$ such that $\Lambda_0$ admits a continuation $\Lambda (= \Lambda(\varphi))$ for every $\varphi \in \mathcal{U}$.
\begin{remark}
    It is possible to show (see proposition $3.7$ of \cite{C1}) that if $\mathcal{U}\subset\textrm{Diff}^{2}(S)$ is sufficiently small, then there exists a residual subset $\mathcal{U}^{*}\subset \mathcal{U}$ with the property that for every $\varphi\in\mathcal{U}^{*}$ and any $r\geq2$, there exists a $C^r$-residual set $\mathcal{S}^r_{\varphi,\Lambda}\subset \mathcal{R}^r_{\varphi,\Lambda}$ such that given $f\in \mathcal{S}^r_{\varphi,\Lambda}$ one has
    $$\{t\in \mathcal{L}_{\varphi,\Lambda,f}:D_{\varphi,\Lambda,f}(t)>0\}=\mathcal{L}_{\varphi,\Lambda,f}\cap (c_{\varphi,\Lambda,f},\infinity)$$
    where $c_{\varphi,\Lambda,f}=\min \mathcal{L}^{'} _{\varphi,\Lambda,f}$. 
\end{remark}

On the other hand, if the mixing horseshoe $\Lambda_0$ satisfies that $HD(\Lambda_0)<1$ and $\mathcal{U}$ is sufficiently small such that $HD(\Lambda)<1$ for every $\varphi \in \mathcal{U}$, then we have 

\begin{theorem}\label{T2}
The set $\mathcal{R}^r_{\varphi,\Lambda}$ is $C^r$-open and dense and there exists a residual set $\tilde{\mathcal{U}}\subset \mathcal{U}$ such that for every $\varphi\in \tilde{\mathcal{U}}$ and $f\in\mathcal{R}^r_{\varphi,\Lambda}$, one has 
\begin{itemize}
    \item $L^{\text{loc}}_{\varphi,\Lambda,f}(t)=L_{\varphi,\Lambda,f}(t)$ for every $t\in \mathcal{J}_{\varphi,\Lambda,f}$,
    \item $L^{\text{loc}}_{\varphi,\Lambda,f}(t)<L_{\varphi,\Lambda,f}(t)$ for every $t\in \tilde{\mathcal{J}}_{\varphi,\Lambda,f}$.
\end{itemize}
\end{theorem}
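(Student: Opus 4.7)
Openness of $\mathcal{R}^r_{\varphi,\Lambda}$ is immediate from compactness of $\Lambda$ and continuous dependence of $\nabla f$, $e^s_x$ and $e^u_x$ on $x$. For density, one uses $C^r$-small bump perturbations supported near each $x_0 \in \Lambda$ at which perpendicularity holds: the perpendicularity conditions impose a positive-codimension constraint on the $1$-jet of $f$, so a generic perturbation destroys them, and a finite subcover of $\Lambda$ yields an $f'$ arbitrarily $C^r$-close to $f$ in $\mathcal{R}^r_{\varphi,\Lambda}$.

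For the core statement, I will use the identity $L_{\varphi,\Lambda,f}(t) = 2\, D_{\varphi,\Lambda,f}(t)$, which holds here because $2D(t) \le HD(\Lambda) < 1$ by Theorem \ref{T1}; this follows from the Moreira-type formula for Hausdorff dimensions of sums of dynamically defined Cantor sets, established for dynamical spectra in \cite{GCD} and \cite{LMMR}. In particular, $L_{\varphi,\Lambda,f}$ inherits continuity from $D_{\varphi,\Lambda,f}$. Combined with Theorem \ref{T1}, $t \in \mathcal{J}_{\varphi,\Lambda,f}$ forces $D(t-\epsilon) < D(t)$ for all $\epsilon > 0$, while $t \in \tilde{\mathcal{J}}_{\varphi,\Lambda,f}$ forces $D$ to be flat on some left neighborhood of $t$ and, since $HD(k^{-1}(-\infty,t+\epsilon)) = D(t+\epsilon) \to D(t)$ from the right, $HD(k^{-1}(t)) < D(t)$.

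For $t \in \mathcal{J}_{\varphi,\Lambda,f}$, the disjoint decomposition
\[
\mathcal{L}_{\varphi,\Lambda,f} \cap (-\infty, t) = \bigl(\mathcal{L}_{\varphi,\Lambda,f} \cap (-\infty, t-\epsilon)\bigr) \,\sqcup\, \bigl(\mathcal{L}_{\varphi,\Lambda,f} \cap [t-\epsilon, t)\bigr)
\]
together with countable stability of $HD$ gives $L(t) = \max\bigl(L(t-\epsilon), HD(\mathcal{L}_{\varphi,\Lambda,f} \cap [t-\epsilon, t))\bigr)$. Since $L(t-\epsilon) = 2D(t-\epsilon) < 2D(t) = L(t)$, the second term must equal $L(t)$; hence $HD(\mathcal{L}_{\varphi,\Lambda,f} \cap (t-2\epsilon, t+\epsilon)) \ge L(t)$ for every $\epsilon > 0$, and $L^{\text{loc}}_{\varphi,\Lambda,f}(t) \ge L_{\varphi,\Lambda,f}(t)$. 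The reverse bound is immediate from $\mathcal{L}_{\varphi,\Lambda,f} \cap (t-\epsilon, t+\epsilon) \subset \mathcal{L}_{\varphi,\Lambda,f} \cap (-\infty, t+\epsilon)$ and continuity of $L$ at $t$.

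For $t \in \tilde{\mathcal{J}}_{\varphi,\Lambda,f}$, I construct $\tilde{\mathcal{U}}$ as a residual subset of $\mathcal{U}$ enforcing the upper semi-continuity
\[
\limsup_{\epsilon \to 0^+} HD\bigl(k_{\varphi,\Lambda,f}^{-1}((t-\epsilon, t+\epsilon))\bigr) \,\le\, HD\bigl(k_{\varphi,\Lambda,f}^{-1}(t)\bigr)
\]
for every $t \in \tilde{\mathcal{J}}_{\varphi,\Lambda,f}$ and every $f \in \mathcal{R}^r_{\varphi,\Lambda}$; combined with the $L = 2D$ identity applied to short windows, this yields $L^{\text{loc}}(t) \le 2\,HD(k^{-1}(t)) < 2D(t) = L(t)$. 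The residual set is built by intersecting, over a countable combinatorial basis of symbolic ``windows'' in the Markov coding of $\Lambda$, the open dense sets of conservative diffeomorphisms supplied by the perturbation machinery of \cite{GCD} and \cite{C1}, ruling out the existence of fresh periodic orbits with large unstable Cantor dimension whose Markov values accumulate at $t$ from outside the fiber $k^{-1}(t)$. The main obstacle here is to reduce the potentially uncountable family of points $t \in \tilde{\mathcal{J}}_{\varphi,\Lambda,f}$ to countably many combinatorial genericity conditions on $\varphi$; I expect to resolve this by exploiting that the maximal intervals of constancy of $D_{\varphi,\Lambda,f}$ form an at-most-countable family, each associated to a definite periodic orbit whose accumulation behavior can be controlled by a single local perturbation.
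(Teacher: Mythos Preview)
Your treatment of the first bullet (the case $t \in \mathcal{J}_{\varphi,\Lambda,f}$) is essentially the paper's own argument, and your identification of the residual set $\tilde{\mathcal{U}}$ via the identity $L_{\varphi,\Lambda,f} = 2D_{\varphi,\Lambda,f} = R_{\varphi,\Lambda,f}$ (this is exactly Theorem~\ref{c2} from \cite{GCD}) is correct.

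The second bullet, however, has a real gap. You attempt to derive $L^{\text{loc}}(t) < L(t)$ from an upper-semicontinuity property of $\epsilon \mapsto HD\bigl(k^{-1}((t-\epsilon,t+\epsilon))\bigr)$, to be enforced by a \emph{separately constructed} residual set of diffeomorphisms. This is both unnecessary and unjustified. The inequality $L^{\text{loc}}(t) \le 2\,HD(k^{-1}(t))$ does \emph{not} follow from any ``$L = 2D$ applied to short windows'': the identity $L(s) = 2D(s)$ concerns half-lines $(-\infty,s)$ and there is no mechanism that localizes it to $(t-\epsilon,t+\epsilon)$ (the map $k_{\varphi,\Lambda,f}$ is not Lipschitz, and the image of a fiber can a priori be larger or smaller in dimension than half the preimage). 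Your proposed construction of $\tilde{\mathcal{U}}$ is explicitly speculative (``I expect to resolve this''), and the reduction to countably many conditions via intervals of constancy of $D$ would still not control the Lagrange spectrum near $t$.

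The paper needs no genericity beyond Theorem~\ref{c2}. What you are missing is the \emph{structural} characterization of $\tilde{\mathcal{J}}_{\varphi,\Lambda,f}$ obtained during the proof of Theorem~\ref{T1}: a point $t$ lies in $\tilde{\mathcal{J}}_{\varphi,\Lambda,f}$ exactly when, for some $\epsilon_0>0$, the finite-type set $M(t,\epsilon_0)\supset \Lambda_{t+\epsilon_0/4}$ decomposes into subhorseshoes in such a way that every subhorseshoe $\tilde{\Lambda}_x$ with $HD(\tilde{\Lambda}_x)\ge R_{\varphi,\Lambda,f}(t)$ has its Lagrange spectrum disjoint from $(t-\epsilon_0/4,\,t+\epsilon_0/4)$. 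Hence $(t-\epsilon_0/4,\,t+\epsilon_0/4)\cap \mathcal{L}_{\varphi,\Lambda,f}$ is covered by the Lagrange spectra of finitely many subhorseshoes of dimension strictly below $R_{\varphi,\Lambda,f}(t)$, and since $\ell_{\varphi,f}(\tilde{\Lambda}_x)\subset f(\tilde{\Lambda}_x)$ one gets directly
\[
L^{\text{loc}}_{\varphi,\Lambda,f}(t)\ \le\ HD\Bigl(\bigcup_{x\in \mathcal{J}(t,\epsilon_0)} \tilde{\Lambda}_x\Bigr)\ <\ R_{\varphi,\Lambda,f}(t)\ =\ L_{\varphi,\Lambda,f}(t).
\]
No additional perturbation of $\varphi$ is required; the same residual set $\tilde{\mathcal{U}}$ from Theorem~\ref{c2} already suffices. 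Your description ``$D$ is flat on a left neighborhood of $t$'' is a consequence of $t\in\tilde{\mathcal{J}}_{\varphi,\Lambda,f}$ but is too weak to run the argument: you need the subhorseshoe decomposition, not just the flatness of $D$.
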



Finally, in section \ref{classical}, we use the dynamical characterization of the classical Lagrange spectrum to recover versions of theorems \ref{T1} and \ref{T2} in that context. 
\section{Preliminares}
\subsection{Dynamical defined Cantor sets}

Let $k\ge 1$ be an integer and $\alpha\in [0,1)$ be a real number. A set $K\subset \mathbb{R}$ is called a $C^{k+\alpha}$-{\it regular Cantor set} if there exists a collection $\mathcal{P}=\{I_1,I_2,...,I_r\}$ of compacts intervals and a $C^{k+\alpha}$-expanding map $\psi$, defined in a neighbourhood of $\bigcup \limits_{1\leq j\leq r}I_j$ such that
	
\begin{enumerate}
\item $K\subset \bigcup \limits_{1\leq j\leq r}I_j$ and $\bigcup \limits_{1\leq j\leq r}\partial I_j\subset K$,
		
\item For every $1\leq j\leq r$ we have that $\psi(I_j)$ is the convex hull of a union of $I_r$'s, for $l$ sufficiently large $\psi^l(K\cap I_j)=K$ and $$K=\bigcap_{n\geq 0}\psi^{-n}(\bigcup_{1\leq j\leq r}I_j).$$
\end{enumerate}

Even more, we say that $K$ is {\it non-essentially affine} if there is no global conjugation $h\circ\psi\circ h^{-1}$ such that all branches 
$$(h\circ\psi\circ h^{-1})|_{h(I_j)}, \ j=1, \dots,r$$
are affine maps of the real line.

Given a finite alphabet $B=\{\beta_1,\dots,\beta_r \}$ with $r\geq 2$ of finite sequences of natural numbers such that $\beta_i$ does not begin with $\beta_j$ for $i\neq j$, the Gauss-Cantor $K(B)\subset [0,1]$ associated to $B$ is 
$$K(B)=\{[0;\gamma_1,\gamma_2,\dots]: \gamma_i\in B,\ \ \forall i\in \mathbb{N} \},$$
which is dynamically defined with $I_j=I(\beta_j)=\{[0;\beta_j,a_1,a_2,\dots]:a_i\in \mathbb{N},\ \forall i\in \mathbb{N}\}$ and $\psi|_{I_j}=G^{\abs{\beta_j}}$ where $G$ is the usual Gauss map given by
\begin{eqnarray*}
  G:(0,1) &\rightarrow& [0,1) \\
   x &\mapsto& \dfrac{1}{x}-\left\lfloor \dfrac{1}{x}\right\rfloor.
\end{eqnarray*}
It follows that given $N\geq 2$ the set $C_N=\{x=[0;a_1,a_2,...]: a_i\le N, \forall i\in \mathbb{N}\}$ is a regular Cantor set. Similarly, we conclude the same for the set $\tilde{C}_N=\{1,2,...,N\}+C_N$.

It is proved in \cite{M3} that Gauss-Cantor sets are non-essentially affine. Finally, the following theorem is from \cite{M50}

\begin{theorem}\label{formula}
Let $K_1,K_2$ be regular Cantor sets such that $K_1$ is of class $C^2$ and is non-essentially affine. Then, given a $C^1$ map $g$ from a neighborhood of $K_1\times K_2$ to $\mathbb{R}$ such that in some point of $K_1\times K_2$ its gradient is not parallel to any of the two coordinate axis, we have 
$$HD(g(K_1\times K_2))=\min\{1,HD(K_1\times K_2)\}.$$
\end{theorem}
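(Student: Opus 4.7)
The upper bound $HD(g(K_1\times K_2))\leq \min\{1,HD(K_1\times K_2)\}$ is the easy half: $g$ is $C^1$ hence Lipschitz on the compact product $K_1\times K_2$, so it cannot increase Hausdorff dimension, and trivially $g(K_1\times K_2)\subset \mathbb{R}$ has dimension at most $1$. My plan is therefore to focus entirely on the lower bound. The strategy is to combine two ingredients: (i) renormalization of the dynamically defined Cantor set $K_1$ via the expanding map $\psi_1$ to turn $g$ into an essentially linear projection at small scales, and (ii) a Marstrand-type projection theorem applied to $K_1\times K_2$.

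Here is the concrete plan. Let $p=(p_1,p_2)\in K_1\times K_2$ be a point where $\nabla g(p)=(a,b)$ has both entries nonzero. Near $p$ one has $g(x,y)=g(p)+a(x-p_1)+b(y-p_2)+o(\|(x,y)-p\|)$. For every admissible cylinder $I\subset K_1$ of symbolic length $n$ containing $p_1$, and similarly a cylinder $J\subset K_2$ of length $m$ containing $p_2$, the inverse branches of $\psi_1^n\times\psi_2^m$ send $I\times J$ onto $K_1\times K_2$ with controlled distortion. Conjugating $g$ by these renormalizations and letting $n,m\to\infty$, the error term disappears in the limit and $g$ becomes, up to affine reparametrization of the target, a linear projection $\pi_t(x,y)=x+ty$ acting on a \emph{limit geometry} of $K_1\times K_2$, i.e., on a pair $(\tilde K_1,\tilde K_2)$ biLipschitz-equivalent to $(K_1,K_2)$. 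The slope $t$ is determined by the ratio $b/a$ and the Jacobians of the renormalizations, so varying the choice of cylinders produces a one-parameter family of slopes.

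The main step will be to show that this family of achievable slopes has positive Lebesgue measure in a neighborhood of $b/a$. This is precisely where the hypothesis that $K_1$ is non-essentially affine enters: if all inverse branches of $\psi_1$ were affine, the achievable ratios of Jacobians would form a discrete multiplicative subgroup and the slopes would lie in a countable set. Non-affinity produces an asymptotically dense set of values for the distortion ratios of renormalized branches of $\psi_1$, which in turn fills an interval of slopes of positive measure. This is the technical heart of the argument and the step I expect to be the main obstacle; it requires a careful analysis of the induced dynamics on the space of limit geometries of $K_1$ together with a Sard-type transversality argument to convert non-affinity into positivity of measure in the slope parameter.

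Once positivity of measure of the slope set is established, the conclusion is routine. Marstrand's projection theorem, applied to the Borel set $\tilde K_1\times \tilde K_2$, guarantees that for Lebesgue-almost every slope $t$ the projection $\pi_t(\tilde K_1\times\tilde K_2)$ has Hausdorff dimension $\min\{1,HD(\tilde K_1\times \tilde K_2)\}=\min\{1,HD(K_1\times K_2)\}$. Selecting any slope $t_0$ in the intersection of the achievable set with this full-measure set and undoing the renormalization, we obtain a biLipschitz copy of $\pi_{t_0}(\tilde K_1\times \tilde K_2)$ inside $g(K_1\times K_2)$, yielding the required lower bound $HD(g(K_1\times K_2))\geq \min\{1,HD(K_1\times K_2)\}$ and completing the proof.
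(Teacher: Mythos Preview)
The paper does not contain a proof of this theorem. Immediately before the statement the paper says ``Finally, the following theorem is from \cite{M50}'', i.e., the result is quoted from Moreira's paper \emph{Geometric properties of images of cartesian products of regular Cantor sets by differentiable real maps} (Math.\ Z., 2023) and is used here only as a black box in Section~\ref{classical}. So there is no in-paper proof to compare your attempt against.

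That said, your outline is in the spirit of the original argument in \cite{M50}: the upper bound is indeed trivial, and the lower bound there proceeds by renormalizing via inverse branches, passing to limit geometries, and invoking Marstrand's projection theorem. The role of the non-essentially affine hypothesis is exactly to ensure that the set of achievable slopes (obtained from ratios of derivatives of renormalizations) is rich enough to hit the full-measure Marstrand set. The delicate point you correctly flag --- turning non-affinity of $K_1$ into a positive-measure set of slopes --- is handled in \cite{M50} via the scale-recurrence lemma / analysis of the renormalization operator on limit geometries, and is indeed the technical core. Your sketch does not constitute a proof (the positivity-of-measure step and the passage from ``some slope in the Marstrand set is achievable'' to a genuine lower bound for $HD(g(K_1\times K_2))$ both require substantial work), but as a plan it is aligned with the actual proof in the cited reference.
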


\subsection{Unstable dimension}
Given a Markov partition $\mathcal{P}=\{R_a\}_{a\in \mathcal{A}}$, recall that the geometrical description of $\Lambda$ in terms of the Markov partition $\mathcal{P}$ has a combinatorial counterpart in terms of the Markov shift $\Sigma_{\mathcal{B}}\subset \mathcal{A}^{\mathbb{Z}}$.
Given an admissible finite sequence $\alpha=(a_0,a_1,...,a_n)\in \mathcal{A}^{n+1}$ (i.e., a factor of some sequence in $\Sigma_{\mathcal{B}}$), we define
	$$I^u(\alpha)=\{x\in K^u: \psi_u^i(x)\in I^u(a_i),\ i=0,1,...,n\}.$$
In a similar way, let $\theta=(a_{s_1},a_{s_1+1},...,a_{s_2})\in \mathcal{A}^{s_2-s_1+1}$ an admissible word where $s_1, s_2 \in \mathbb{Z}$, $s_1 < s_2$ and fix $s_1\le s\le s_2$. Define $$R(\theta;s)=\bigcap_{m=s_1-s}^{s_2-s} \varphi^{-m}(R_{a_{m+s}}).$$
Note that if $x\in R(\theta;s)\cap \Lambda$ then the symbolic representation of $x$ is in the way $\Pi(x)=(\dots,a_{s_1}\dots a_{s-1};a_{s},a_{s+1}\dots a_{s_2}\dots)$, where the letter following to $;$ is in the $0$ position of the sequence.

In our context of dynamically defined Cantor sets, we can relate the length of the unstable intervals determined by an admissible word to its length as a word in the alphabet $\mathcal{A}$ via the {\it bounded distortion property} that let us conclude that for some constant $c_1>0$, and admissible words $\alpha$ and $\beta$
\begin{equation}\label{bdp1}
e^{-c_1}|I^u(\alpha)|\cdot|I^u(\beta)|\le |I^u(\alpha\beta)|\le e^{c_1}|I^u(\alpha)|\cdot|I^u(\beta)|,
	\end{equation}
if $w_1,w_2\in I^u(\alpha)$ but $w_1$ and $w_2$ do not belong to the same $I^u(\alpha b)$ for any $b\in\mathcal{A}$, then
\begin{equation}\label{bdp2}
\abs{w_1-w_2}\geq e^{-c_1}\abs{I^u(\alpha)}
	\end{equation}
and also that for some positive constants $\lambda_1,\lambda_2<1$, one has
\begin{equation}\label{bdp3}
e^{-c_1} \lambda_1^{\abs{\alpha}}\leq\abs{I^u(\alpha)}\leq e^{c_1} \lambda_2^{\abs{\alpha}}.
	\end{equation}

Given $\alpha=(a_0,a_1,...,a_n)\in \mathcal{A}^{n+1}$ admissible write, $r^{u}(\alpha)=\lfloor \log(1/\abs{I^u(\alpha)})\rfloor$ 
and for $r\in \mathbb{N}$ define the set
$$P^{u}_r=\{(a_0,a_1,...,a_n)\in \mathcal{A}^{n+1} \ \mbox{admissible}:  r^{u}((a_0,...,a_n))\geq r \ \mbox{and} \ r^{u}((a_0,...,a_{n-1}))<r\}$$
Given $X\subset \Lambda$ compact and $\varphi$-invariant, we also set
$$\mathcal{C}_u(X,r)=\{\alpha\in P^u_r:  I^{u}(\alpha)\cap \pi^u(X)\neq \emptyset\}.$$
The following result is from \cite{C1}:
\begin{lemma}
For each $X\subset \Lambda$ compact and $\varphi$-invariant, the limit capacity of $\pi^u(X)$ is given by the limit 
$$D_u(X):=\lim_{r\to \infty}\dfrac{\log |\mathcal{C}_u(X,r)|}{r}.$$
\end{lemma}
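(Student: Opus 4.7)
The plan is to compare the combinatorial count $|\mathcal{C}_u(X,r)|$ with the $\epsilon$-covering number of $\pi^u(X)$ for $\epsilon \asymp e^{-r}$, via the bounded distortion estimates (\ref{bdp1})--(\ref{bdp3}), and then to establish the existence of the limit using the invariance of $X$.

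First I would pin down the geometry of $P^u_r$. For $\alpha=(a_0,\ldots,a_n)\in P^u_r$, the definition of $r^u$ gives $\abs{I^u(\alpha)}\le e^{-r}$ and $\abs{I^u((a_0,\ldots,a_{n-1}))}>e^{-r}$. Writing $\alpha = (a_0,\ldots,a_{n-1})\cdot a_n$ and applying (\ref{bdp1}), one obtains $\abs{I^u(\alpha)}\ge e^{-c_1}\abs{I^u((a_0,\ldots,a_{n-1}))}\cdot \min_{a\in\mathcal{A}}\abs{I^u((a))}\ge c_\ast e^{-r}$ for a uniform constant $c_\ast>0$. So every member of $P^u_r$ gives an interval of length comparable to $e^{-r}$. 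Moreover, by truncating each infinite admissible sequence at the first instant where the length of its initial cylinder falls below $e^{-r}$, one sees that $\{I^u(\alpha):\alpha\in P^u_r\}$ covers $K^u$, and its members have disjoint interiors by the Markov property. From this, the comparison with covering numbers $N(\pi^u(X),\epsilon)$ is immediate: the inclusion $\pi^u(X)\subseteq\bigcup_{\alpha\in\mathcal{C}_u(X,r)} I^u(\alpha)$ gives $N(\pi^u(X),e^{-r})\le |\mathcal{C}_u(X,r)|$.

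For the matching lower bound I would use the separation estimate (\ref{bdp2}): if $\alpha\ne\beta$ in $P^u_r$ have longest common prefix $\gamma$, then $\alpha=\gamma a \cdots$ and $\beta=\gamma b\cdots$ with $a\ne b$, so the intervals $I^u(\alpha)$ and $I^u(\beta)$ lie in distinct children of $I^u(\gamma)$, and points in them are at distance at least $e^{-c_1}\abs{I^u(\gamma)}\ge e^{-c_1}c_\ast e^{-r}$. Hence any cover of $\pi^u(X)$ by sets of diameter less than $\tfrac12 e^{-c_1}c_\ast e^{-r}$ needs at least $|\mathcal{C}_u(X,r)|$ members, so that $|\mathcal{C}_u(X,r)|\le N(\pi^u(X),c'e^{-r})$ for some $c'>0$. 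After taking logarithms and dividing by $r$, the multiplicative constants vanish, and the upper and lower limit capacities of $\pi^u(X)$ agree respectively with the $\limsup$ and $\liminf$ of $\tfrac{\log|\mathcal{C}_u(X,r)|}{r}$.

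The last, and most delicate, step is showing that the limit exists. Here I would exploit the $\varphi$-invariance of $X$, which together with the identity $\psi_u\circ\pi^u=\pi^u\circ\varphi$ yields $\psi_u(\pi^u(X))=\pi^u(X)$. Any word in $\mathcal{C}_u(X,r+s)$ can then be decomposed as $\alpha\beta$ where $\alpha\in\mathcal{C}_u(X,r)$ and, after pulling back by $\psi_u^{|\alpha|}$, the tail $\beta$ belongs to $\mathcal{C}_u(X,s)$ up to a uniform additive shift in $s$ controlled by (\ref{bdp1}) and (\ref{bdp3}). This gives a submultiplicative estimate of the form $|\mathcal{C}_u(X,r+s)|\le C\,|\mathcal{C}_u(X,r)|\cdot|\mathcal{C}_u(X,s+O(1))|$, and Fekete's lemma applied to $\log(C|\mathcal{C}_u(X,r)|)$ produces the desired limit. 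The principal obstacle is this submultiplicativity: one must verify that the admissible continuations $\beta$ after any chosen $\alpha\in\mathcal{C}_u(X,r)$ are all parameterised by words meeting a single copy of $\pi^u(X)$, and this is exactly the point at which $\psi_u$-invariance — rather than mere containment inside $K^u$ — becomes essential.
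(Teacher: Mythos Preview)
The paper does not actually prove this lemma: it is quoted from \cite{C1} with no argument given here, so there is no in-paper proof to compare against. Your approach is the standard one and is essentially correct.

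The first two steps are unproblematic. For the third, your outline is right but the bookkeeping deserves a little more care than you indicate. When you split a word $\gamma\in\mathcal{C}_u(X,r+s)$ as $\gamma=\alpha\beta$ with $\alpha$ the unique prefix in $P^u_r$, the tail $\beta$ need not itself lie in any $P^u_{s'}$ with $s'$ close to $s$; what is true is that $\beta$ has a unique \emph{prefix} $\tilde\beta\in P^u_{s-C}$ for a fixed constant $C$ coming from (\ref{bdp1}), and $\tilde\beta\in\mathcal{C}_u(X,s-C)$ by the $\psi_u$-invariance of $\pi^u(X)$. The assignment $\gamma\mapsto(\alpha,\tilde\beta)$ is then only boundedly-many-to-one (the excess length $|\gamma|-|\alpha\tilde\beta|$ is $O(1)$ by (\ref{bdp1}) and (\ref{bdp3})), giving $|\mathcal{C}_u(X,r+s)|\le C'|\mathcal{C}_u(X,r)|\cdot|\mathcal{C}_u(X,s-C)|$. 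To close the loop and get honest submultiplicativity of $r\mapsto\log(C''|\mathcal{C}_u(X,r)|)$ you still need $|\mathcal{C}_u(X,s-C)|\le C'''|\mathcal{C}_u(X,s)|$; this follows from your already-established two-sided comparison with covering numbers together with the monotonicity of $\epsilon\mapsto N(\pi^u(X),\epsilon)$. With that adjustment Fekete applies and the limit exists. None of this is a genuine gap, just detail you would want to make explicit in a full write-up.
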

\begin{remark}
    A similar result can be established for $\pi^s(X)$.
\end{remark}

\subsection{Results on dynamical spectra}
Let $r\geq2$ and define
$$\mathcal{P}^r_{\varphi,\Lambda}=\{f\in C^r(S,\mathbb{R}): \nabla f(x)\neq 0,  \ \forall \ x\in \Lambda\}.$$ 
In other words, $\mathcal{P}^r_{\varphi,\Lambda}$ is the class of functions $C^r$, $f :S \rightarrow \mathbb{R}$ such that for every $x\in \Lambda$ either $\grad f(x)$ is not perpendicular to $e^s_x$ or is not perpendicular to $e^u_x$. 

In \cite{CMM16} and \cite{GCD} are proven the following results

\begin{theorem}\label{c1}
Let $\varphi\in \text{Diff}^{2}_{\omega}(S)$ with a mixing horseshoe $\Lambda$. For every $r\ge 2$ the set $\mathcal{P}^r_{\varphi,\Lambda}$ is $C^r$-open and dense and such that for any $f\in \mathcal{P}^r_{\varphi,\Lambda}$ the function
$$t\rightarrow D_u(\Lambda_t) $$
is continuous and we have the equality
$$R_{\varphi,\Lambda,f}(t)=2D_u(\Lambda_t).$$
Even more,  $\mathcal{R}^r_{\varphi,\Lambda}$ is also $C^r$-open and dense if $HD(\Lambda)<1$.
\end{theorem}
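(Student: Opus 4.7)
The plan is to proceed in three stages corresponding to the three claims. Openness and density of $\mathcal{P}^r_{\varphi,\Lambda}$ is the easiest part: openness follows because $\nabla f\ne 0$ on the compact set $\Lambda$ is a $C^1$-open condition, and density follows by perturbing $f$ to $f+\epsilon\ell$ with a generic linear function $\ell$ in local coordinates covering $\Lambda$, so that the zero locus of $\nabla f$ is avoided by a generic constant translation of the gradient. For density of $\mathcal{R}^r_{\varphi,\Lambda}$ under the assumption $HD(\Lambda)<1$, each pointwise condition $\langle\nabla f(x),e^s_x\rangle=0$ or $\langle\nabla f(x),e^u_x\rangle=0$ is a codimension-one linear constraint on the $1$-jet of $f$ at $x$; using a finite open cover of $\Lambda$ together with bump-function perturbations and the fact that $\dim_H\Lambda<1$, a multijet transversality argument rules both conditions out $C^r$-generically.

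The heart of the theorem is the formula $R_{\varphi,\Lambda,f}(t)=2D_u(\Lambda_t)$ together with continuity of $t\mapsto D_u(\Lambda_t)$. Since $\Lambda_t=m_{\varphi,f}^{-1}((-\infty,t])$ is compact and $\varphi$-invariant, one can encode it symbolically: $\pi^u(\Lambda_t)\subset K^u$ is stable under $\psi_u$, and the lemma on unstable limit capacity identifies $D_u(\Lambda_t)$ with the exponential growth rate of $|\mathcal{C}_u(\Lambda_t,r)|$. Using the bounded-distortion inequalities \eqref{bdp1}--\eqref{bdp3}, I would match this growth rate to a Hausdorff-dimension computation for $\pi^u(\Lambda_t)$. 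Continuity then reduces to showing that the quantities $|\mathcal{C}_u(\Lambda_{t\pm\delta},r)\setminus\mathcal{C}_u(\Lambda_t,r)|$ grow strictly slower than $|\mathcal{C}_u(\Lambda_t,r)|$ in $r$ as $\delta\to 0$; the hypothesis $\nabla f\ne 0$ on $\Lambda$ makes the level sets $\{f=t\}$ transverse to the Cantor structure near $\Lambda$, so only few new cylinders are added as $t$ varies, giving the desired continuity.

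For the equality $HD(\Lambda_t)=2D_u(\Lambda_t)$ itself, the plan is to exploit the conservative hypothesis: the area form pairs stable and unstable directions so that corresponding expansion and contraction rates along $\psi_u$ and $\psi_s$ are matched, forcing $D_u(\Lambda_t)=D_s(\Lambda_t)$. Combined with the local product structure of $\Lambda$ via the Markov partition and the $C^{1+\alpha}$ regularity of $K^u$ and $K^s$, a Manning--McCluskey / Palis--Takens type argument should yield $HD(\Lambda_t)=D_u(\Lambda_t)+D_s(\Lambda_t)=2D_u(\Lambda_t)$. I expect the main obstacle to lie precisely here: $\Lambda_t$ is not in general a horseshoe, and the local inclusion $\Lambda_t\cap R_a\subseteq\pi^u(\Lambda_t\cap R_a)\times\pi^s(\Lambda_t\cap R_a)$ can be strict, so the product dimension formula does not apply directly. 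The strategy, following \cite{CMM16} and \cite{GCD}, is to approximate $\Lambda_t$ from below by genuine subhorseshoes whose unstable and stable dimensions converge to $D_u(\Lambda_t)$ and $D_s(\Lambda_t)$; the gradient hypothesis of $\mathcal{P}^r_{\varphi,\Lambda}$ is what makes these approximations controllable, since it prevents the sublevel set from degenerating along the stable or unstable leaves at the critical parameter.
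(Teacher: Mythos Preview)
The paper does not prove Theorem~\ref{c1}: it is stated as a result of \cite{CMM16} and \cite{GCD} (see the sentence immediately preceding the theorem), so there is no in-paper argument to compare against. Your outline is broadly faithful to what those references do --- approximation of $\Lambda_t$ from inside by genuine subhorseshoes, the conservative identity $D_u=D_s$ on subhorseshoes, and a cylinder count for continuity --- and you correctly isolate the main obstacle, that $\Lambda_t$ has no local product structure so the product dimension formula cannot be applied directly.

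Two places in your sketch would need real work if you intended to carry this out. First, the continuity step: the assertion that ``$\nabla f\ne 0$ makes level sets transverse to the Cantor structure, so only few new cylinders are added'' is not yet a proof. The actual mechanism in \cite{CMM16} and \cite{GCD} is a combinatorial estimate on the number of admissible words of length $n$ whose associated rectangles meet the thin strip $\{t<\max_{|k|\le n} f\circ\varphi^k\le t+\delta\}$, and it exploits that at each point $\nabla f$ is non-perpendicular to at least one of $e^s_x,e^u_x$ to obtain monotonicity along one family of leaves; this is where most of the effort in those papers goes. Second, for density of $\mathcal{R}^r_{\varphi,\Lambda}$ when $HD(\Lambda)<1$, ``multijet transversality'' is a misnomer: transversality yields intersections of the expected codimension, not avoidance, and the bad locus here sits over the fractal $\Lambda$ rather than a submanifold. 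The argument is measure-theoretic: for a local linear perturbation $f+\langle v,\cdot\rangle$, the set of bad parameters $v\in\reals^2$ for which some $x\in\Lambda$ satisfies $\langle\nabla f(x)+v,e^s_x\rangle=0$ is a union over $x\in\Lambda$ of affine lines, hence has Hausdorff dimension at most $1+HD(\Lambda)<2$ and Lebesgue measure zero; similarly for $e^u$.
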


\begin{theorem}\label{c2}

Let $\varphi_0\in \text{Diff}^{2}_{\omega}(S)$ with a mixing horseshoe $\Lambda_0$ with $HD(\Lambda_0)<1$ and $\mathcal{U}$ a $C^{2}$-sufficiently small neighbourhood of $\varphi_0$ in $\text{Diff}^{2}_{\omega}(S)$ such that $\Lambda_0$ admits a continuation $\Lambda$ with $HD(\Lambda)<1$ for every $\varphi \in \mathcal{U}$. There exists a residual set $\tilde{\mathcal{U}}\subset \mathcal{U}$ such that for every $\varphi\in \tilde{\mathcal{U}}$ and $f\in\mathcal{R}^r_{\varphi,\Lambda}$, we have the equality 
$$R_{\varphi,\Lambda,f}=L_{\varphi,\Lambda,f}=M_{\varphi,\Lambda,f}.$$
\end{theorem}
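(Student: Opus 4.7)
The plan is to prove for $\varphi$ in a suitable residual subset $\tilde{\mathcal{U}}\subset\mathcal{U}$ the chain of inequalities
$L_{\varphi,\Lambda,f}(t)\le M_{\varphi,\Lambda,f}(t)\le R_{\varphi,\Lambda,f}(t)\le L_{\varphi,\Lambda,f}(t)$
for every $t\in\mathbb{R}$. The first bound is immediate because $\mathcal{L}_{\varphi,\Lambda,f}\subset\mathcal{M}_{\varphi,\Lambda,f}$. From Theorem \ref{c1} we have $R_{\varphi,\Lambda,f}(t)=2D_u(\Lambda_t)$, and since $\varphi$ is conservative and $HD(\Lambda)<1$ this equals $HD(\pi^u(\Lambda_t))+HD(\pi^s(\Lambda_t))=HD(\Lambda_t)$.

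For $M_{\varphi,\Lambda,f}(t)\le R_{\varphi,\Lambda,f}(t)$, I would exploit the identity $\mathcal{M}_{\varphi,\Lambda,f}\cap(-\infty,t]=m_{\varphi,f}(\Lambda_t)$ together with the countable stratification $\Lambda_t=\bigcup_{n\in\mathbb{Z}}\Lambda_t^{(n)}$, where $\Lambda_t^{(n)}$ is the Borel set of points whose Markov value is attained at iterate $n$. On each stratum $m_{\varphi,f}$ coincides with the smooth map $f\circ\varphi^n$, so by countable additivity of Hausdorff dimension and the fact that Lipschitz maps do not increase it,
\[
HD\bigl(m_{\varphi,f}(\Lambda_t)\bigr)\le \sup_n HD(\Lambda_t^{(n)})\le HD(\Lambda_t)=R_{\varphi,\Lambda,f}(t).
\]

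The heart of the argument is the reverse inequality $R_{\varphi,\Lambda,f}(t)\le L_{\varphi,\Lambda,f}(t)$. Given $\varepsilon>0$, the plan is to construct a topologically transitive sub-horseshoe $\tilde{\Lambda}\subset\Lambda_{t-\delta}$, for some $\delta=\delta(\varepsilon)>0$, with $HD(\tilde{\Lambda})\ge HD(\Lambda_t)-\varepsilon$. Since every point of $\tilde{\Lambda}$ is recurrent, one has $\ell_{\varphi,f}(x)=m_{\varphi,f}(x)$ on $\tilde{\Lambda}$, so $m_{\varphi,f}(\tilde{\Lambda})\subset\mathcal{L}_{\varphi,\Lambda,f}\cap(-\infty,t)$. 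The unstable and stable projections $\pi^u(\tilde{\Lambda})$, $\pi^s(\tilde{\Lambda})$ are $C^{1+\alpha}$ dynamically defined Cantor sets (actually Gauss-Cantor-like via the Markov coding), hence non-essentially affine. On a neighborhood of a point where $m_{\varphi,f}$ equals $f\circ\varphi^n$, the local product structure of $\tilde{\Lambda}$ together with the assumption $f\in\mathcal{R}^r_{\varphi,\Lambda}$ ensures that the pullback of $f\circ\varphi^n$ has gradient non-parallel to the coordinate axes. Theorem \ref{formula} then yields
\[
HD\bigl(m_{\varphi,f}(\tilde{\Lambda})\bigr)=\min\{1,HD(\tilde{\Lambda})\}=HD(\tilde{\Lambda})\ge HD(\Lambda_t)-\varepsilon,
\]
using $HD(\Lambda)<1$. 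Letting $\varepsilon\to 0$ and combining with continuity of $t\mapsto D_u(\Lambda_t)$ (Theorem \ref{c1}) gives $L_{\varphi,\Lambda,f}(t)\ge R_{\varphi,\Lambda,f}(t)$.

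The main obstacle is producing the approximating sub-horseshoe $\tilde{\Lambda}$ strictly inside $\Lambda_{t-\delta}$ rather than merely inside $\Lambda_t$. Inner approximation of a locally maximal hyperbolic set by transitive basic pieces of near-maximal dimension is classical (Manning--McCluskey/Palis--Takens), but one must avoid the pathology in which every such basic piece meets the level set $\{m_{\varphi,f}=t\}$. This is exactly where the residual assumption is needed: for a countable dense set $\{t_k\}\subset\mathbb{R}$ one shows by arbitrarily small $C^2$-perturbations supported near the boundary of $\Lambda_{t_k}$ that the pathological situation is nowhere dense, and then defines $\tilde{\mathcal{U}}$ as the intersection over $k$ of the corresponding generic sets. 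The continuity of $D_u(\Lambda_t)$ in $t$ (Theorem \ref{c1}) propagates equality from the dense set $\{t_k\}$ to all $t\in\mathbb{R}$, concluding $R_{\varphi,\Lambda,f}=L_{\varphi,\Lambda,f}=M_{\varphi,\Lambda,f}$ on all of $\mathbb{R}$ for every $\varphi\in\tilde{\mathcal{U}}$.
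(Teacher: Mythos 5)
This theorem is quoted in the paper from \cite{CMM16} and \cite{GCD}; the paper itself contains no proof, so your proposal can only be judged against the argument in those references. The overall shape of your proof is right: show $L\le M\le R\le L$ via a set-theoretic inclusion, a Lipschitz bound, and an inner approximation by a sub-horseshoe to which Moreira's theorem on images of products of Cantor sets is applied. That is indeed the strategy of \cite{CMM16}/\cite{GCD}. However, two of your claims are wrong, and one of them hides the actual reason for the residual hypothesis.

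The serious error is the assertion that $\pi^u(\tilde\Lambda)$ and $\pi^s(\tilde\Lambda)$ are ``Gauss-Cantor-like via the Markov coding, hence non-essentially affine.'' Dynamically defined Cantor sets arising as projections of horseshoes are \emph{not} automatically non-essentially affine; Gauss--Cantor sets are a very specific family and their non-essential affineness is a theorem of Moreira about continued fractions, not a general fact. Whether $K^u$ is non-essentially affine depends on the dynamics $\varphi$, and it fails for affine horseshoes, for instance. This is precisely what the residual set $\tilde{\mathcal{U}}\subset\mathcal{U}$ of diffeomorphisms provides: generically in $\varphi$, the Cantor set $K^u(\Lambda)$ (and the relevant sub-Cantor sets) are non-essentially affine, so that Theorem~\ref{formula} can be invoked. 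Your alternative explanation for the residual hypothesis --- perturbing to avoid sub-horseshoes touching the level set $\{m_{\varphi,f}=t\}$ --- is not where genericity of $\varphi$ is used; the approximating sub-horseshoe of near-maximal dimension strictly inside a level sub-level set is constructed for \emph{every} $\varphi$ and $f\in\mathcal{P}^r_{\varphi,\Lambda}$ by a combinatorial pruning argument (Proposition~1 of \cite{GCD}), no perturbation of the dynamics needed.

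A smaller slip: ``since every point of $\tilde\Lambda$ is recurrent, one has $\ell_{\varphi,f}(x)=m_{\varphi,f}(x)$ on $\tilde\Lambda$'' is false. Transitivity gives a dense orbit, not recurrence of every point; a point $x\in\tilde\Lambda$ may have $\omega(x)$ a proper subset on which $f$ is smaller than $\sup_n f(\varphi^n(x))$. What is true and is what one needs is the set equality $m_{\varphi,f}(\tilde\Lambda)=\ell_{\varphi,f}(\tilde\Lambda)$ for a transitive locally maximal hyperbolic set, obtained by taking the point $y$ realizing a given Markov value and choosing a point of $\tilde\Lambda$ whose forward orbit accumulates on $y$. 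Also, for the bound $M\le R$, the stratification into sets where the supremum is attained at a given iterate $n$ does not a priori cover $\Lambda_t$ (the supremum may fail to be attained on the orbit); it is cleaner to note that $\mathcal{M}_{\varphi,\Lambda,f}\cap(-\infty,t]\subset f(\Lambda_t)$, since any Markov value is achieved as $f$ of an accumulation point of the orbit, and then $M(t)\le HD(f(\Lambda_t))\le HD(\Lambda_t)=R(t)$.
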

\subsection{Sets of finite type and connection of subhorseshoes}\label{tipofinito}

The following definitions and results can be found in \cite{GC}. Fix a horseshoe $\Lambda$ of some conservative diffeomorphism $\varphi:S\rightarrow S$ and $\mathcal{P}=\{R_a\}_{a\in \mathcal{A}}$ some Markov partition for $\Lambda$. Take a finite collection $X$ of finite admissible words $\theta=(a_{-n(\theta)},\dots,a_{-1},a_0,a_1,\dots,a_{n(\theta)})$, we said that the maximal invariant set 
$$M(X)=\bigcap \limits_{m \in \mathbb{Z}} \varphi ^{-m}(\bigcup \limits_{\theta \in X}  R(\theta;0))$$ 
is a \textit{hyperbolic set of finite type}. Even more, it is said to be a \textit{subhorseshoe} of $\Lambda$ if it is nonempty and $\varphi|_{M(X)}$ is transitive. Observe that a subhorseshoe need not be a horseshoe; indeed, it could be a periodic orbit in which case it will be called of trivial.

By definition, hyperbolic sets of finite type have local product structure. In fact, any hyperbolic set of finite type is a locally maximal invariant set of a neighborhood of a finite number of elements of some Markov partition of $\Lambda$.

\begin{definition}
Any $\tau \subset M(X)$ for which there are two different subhorseshoes $\Lambda(1)$ and $\Lambda(2)$ of $\Lambda$ contained in $M(X)$ with 
$$\tau=\{x\in M(X):\ \omega(x)\subset \Lambda(1)\ \text{and}\ \alpha(x)\subset \Lambda(2)  \}$$
will be called a transient set or transient component of $M(X)$.
\end{definition}

Note that by the local product structure, given a transient set $\tau$ as before,
\begin{equation}\label{transient}
    HD(\tau)=HD(K^s(\Lambda(2)))+HD(K^u(\Lambda(1)))
\end{equation}
and also, for any subhorseshoe $\tilde{\Lambda}\subset \Lambda$, being  $\varphi$ conservative, one has
\begin{equation}\label{transient2}
HD(\tilde{\Lambda})=HD(K^s(\tilde{\Lambda}))+HD(K^u(\tilde{\Lambda}))=2HD(K^u(\tilde{\Lambda})). 
\end{equation}
\begin{proposition}\label{appendix}
Any hyperbolic set of finite type $M(X)$, associated with a finite collection of finite admissible words $X$ as before, can be written as
$$M(X)=\bigcup \limits_{i\in \mathcal{I}} \tilde{\Lambda}_i $$ 
where $\mathcal{I}$ is a finite index set (that may be empty) and for $i\in \mathcal{I}$,\ $\tilde{\Lambda}_i$ is a subhorseshoe or a transient set.
\end{proposition}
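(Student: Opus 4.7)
The plan is to reduce everything to the combinatorics of the symbolic coding and carry out a Smale--type spectral decomposition at the level of the transition graph of the words in $X$. Under $\Pi$, the hyperbolic set $M(X)$ is topologically conjugate to a subshift of finite type whose allowed sequences are those whose every window of length $2\max_{\theta\in X}n(\theta)+1$ realizes some word of $X$. Build a finite directed graph $G_X$ whose vertices are the words of $X$ and whose edges $\theta\to\theta'$ record admissible overlaps (i.e., the pair $(\theta,\theta')$ occurs in some bi-infinite sequence of $\Sigma_{\mathcal{B}}$). Bi-infinite orbits in $M(X)$ correspond to bi-infinite walks in $G_X$.

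Decompose $G_X$ into strongly connected components $C_1,\dots,C_N$. The crucial observation is the \emph{no-return property}: once a walk leaves an SCC it can never re-enter, for otherwise the intermediate vertices would themselves lie in the same SCC. Because $G_X$ is finite, every bi-infinite walk has a well-defined \emph{past SCC} $C_j$ (the unique SCC its backward tail stays in from some time on) and \emph{future SCC} $C_i$. For each SCC $C_i$ which is non-trivial --- either with $\geq 2$ vertices or a single vertex with a self-loop --- the set of bi-infinite walks that stay inside $C_i$ defines, via $\Pi^{-1}$, a compact $\varphi$-invariant set $\tilde{\Lambda}_i$ with local product structure and transitive dynamics (by strong connectivity); this is a subhorseshoe (trivial, i.e.\ a single periodic orbit, when $C_i$ is a loop at a single vertex). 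Isolated vertices without self-loops carry no bi-infinite walk and thus contribute nothing to $M(X)$.

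Finally, classify an arbitrary $x\in M(X)$ by the ordered pair $(C_j,C_i)$ of its past and future SCCs. If $i=j$, the no-return property forces the whole orbit of $x$ to stay in $C_i$, so $x\in \tilde{\Lambda}_i$. If $i\neq j$, then by construction $\alpha(x)\subset \tilde{\Lambda}_j$ and $\omega(x)\subset \tilde{\Lambda}_i$, which places $x$ in the transient component $\tau_{ij}$ associated with the distinct subhorseshoes $\tilde{\Lambda}_i$ and $\tilde{\Lambda}_j$; such a $\tau_{ij}$ is nonempty exactly when $G_X$ contains a directed path from $C_j$ to $C_i$. Since $G_X$ has finitely many SCCs and thus finitely many ordered pairs, we obtain the stated decomposition indexed by a finite set $\mathcal{I}$.

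The step I expect to require the most care is the transition from walks to $\omega$- and $\alpha$-limit sets, namely showing that the two limit sets of any orbit of $\varphi|_{M(X)}$ are each contained in a single $\tilde{\Lambda}_i$. This uses compactness of $M(X)$ together with the identification of SCCs with the chain-recurrent pieces of the subshift, and is where one must be a bit careful to distinguish among trivial SCCs (no self-loop vs.\ self-loop) so that the recurrent pieces really correspond to subhorseshoes in the sense defined earlier.
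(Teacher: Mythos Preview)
The paper does not actually prove this proposition; it quotes it from \cite{GC} (see the opening sentence of \S\ref{tipofinito}). So there is no in-paper argument to compare against, and your proposal must be judged on its own.

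Your approach---pass to the symbolic model, build the higher-block transition graph $G_X$, take its strongly connected components, and read off subhorseshoes from the nontrivial SCCs and transient sets from ordered pairs of distinct SCCs joined by a directed path---is exactly the standard Smale spectral decomposition argument specialised to a subshift of finite type, and it is correct. Two small points are worth tightening. First, the words $\theta\in X$ may have different lengths $2n(\theta)+1$, so your description ``every window of length $2\max_{\theta}n(\theta)+1$ realizes some word of $X$'' is not literally the defining condition of $M(X)$; you should first replace $X$ by the equivalent collection $X'$ obtained by extending each $\theta$ to all its admissible completions of the maximal length, so that $M(X)=M(X')$ and the vertices of $G_{X'}$ are words of a common length with edges given by one-step overlaps. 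Second, to see that each $\tilde{\Lambda}_i$ is a subhorseshoe \emph{in the sense of the paper}, note that $\tilde{\Lambda}_i=M(C_i)$ (with $C_i\subset X'$), hence it is itself a hyperbolic set of finite type, and strong connectivity of $C_i$ gives irreducibility of the induced transition matrix and therefore topological transitivity of $\varphi|_{\tilde{\Lambda}_i}$. With these two clarifications, the identification of $\omega(x)$ and $\alpha(x)$ with the future and past SCCs goes through exactly as you outline, and the finitely many pieces $\tilde{\Lambda}_i$ together with the finitely many nonempty $\tau_{ij}$ cover $M(X)$.
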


Fix $f:S\rightarrow \mathbb{R}$  differentiable. A notion that plays an important role in our study of the concentration of Hausdorff dimension is the notion of \textit{connection of subhorseshoes} 

\begin{definition}\label{conection of horseshoes1}
Given $\Lambda(1)$ and $\Lambda(2)$ subhorseshoes of $\Lambda$ and $t\in \mathbb{R}$, we said that $\Lambda(1)$ \emph{connects} with $\Lambda(2)$ or that $\Lambda(1)$ and $\Lambda(2)$ \emph{connect} before $t$ if there exist a subhorseshoe $\tilde{\Lambda}\subset \Lambda$ and some $q< t$ with $\Lambda(1) \cup \Lambda(2) \subset \tilde{\Lambda}\subset \Lambda_q$.
\end{definition}

For our present purposes, the next criterion of connection will be also important

\begin{proposition}\label{connection11}
Suppose $\Lambda(1)$ and $\Lambda(2)$ are subhorseshoes of $\Lambda$ and for some $x,y \in \Lambda$ we have $x\in W^u(\Lambda(1))\cap W^s(\Lambda(2))$ and $y\in W^u(\Lambda(2))\cap W^s(\Lambda(1))$. If for some $t\in \mathbb{R}$, it is true that 
$$\Lambda(1) \cup \Lambda(2) \cup \mathcal{O}(x) \cup \mathcal{O}(y) \subset \Lambda_t,$$ then for every $\epsilon >0$,\ $\Lambda(1)$ and $\Lambda(2)$ connect before $t+\epsilon$. 
\end{proposition}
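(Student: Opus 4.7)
\textbf{Proof plan for Proposition \ref{connection11}.}

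The plan is to construct an explicit subhorseshoe $\tilde{\Lambda}$ of $\Lambda$ containing $\Lambda(1)\cup\Lambda(2)$ whose points have Markov value at most $t+\epsilon$, and to do so by using the heteroclinic orbits $\mathcal{O}(x)$ and $\mathcal{O}(y)$ as the "bridges" between the two subhorseshoes. The key inputs are the local product structure of $\Lambda$, the symbolic description of subhorseshoes via Markov partitions, and the uniform continuity of $f$ on a compact neighborhood of $\Lambda$.

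First, I would extract transition words from $x$ and $y$. Let $\Sigma_1,\Sigma_2\subset\Sigma_{\mathcal{B}}$ be the sub-shifts coding $\Lambda(1)$ and $\Lambda(2)$. Since $x\in W^u(\Lambda(1))\cap W^s(\Lambda(2))$, the symbolic sequence $\Pi(x)=(a_n)_{n\in\mathbb{Z}}$ satisfies $\lim_{n\to-\infty}d(\varphi^n(x),\Lambda(1))=0$ and $\lim_{n\to+\infty}d(\varphi^n(x),\Lambda(2))=0$. Thus for a large integer $N$, the finite word $\tau_x:=(a_{-N},\dots,a_N)$ has its initial segment extendable to a left-infinite word of $\Sigma_1$ and its final segment extendable to a right-infinite word of $\Sigma_2$. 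Analogously we obtain $\tau_y$ joining $\Sigma_2$ to $\Sigma_1$. The relevant feature is that the orbit pieces $\varphi^n(x)$ for $|n|\le N$ and $\varphi^n(y)$ for $|n|\le N$ lie in $\Lambda_t$ by hypothesis.

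Second, I would define $\tilde{\Lambda}$ as the maximal invariant set of $\varphi$ inside the union of rectangles visited by any bi-infinite admissible concatenation of the form
\[
\cdots\,w_1^{(k)}\,\tau_x\,w_2^{(k)}\,\tau_y\,w_1^{(k+1)}\,\tau_x\,w_2^{(k+1)}\,\tau_y\,\cdots
\]
where each $w_1^{(k)}$ is a finite admissible word of $\Sigma_1$ and each $w_2^{(k)}$ is a finite admissible word of $\Sigma_2$, together with the two pure sequences lying entirely in $\Sigma_1$ or $\Sigma_2$. This set is a hyperbolic set of finite type because it is cut out by a finite collection of forbidden transitions, and by construction $\Lambda(1)\cup\Lambda(2)\cup\mathcal{O}(x)\cup\mathcal{O}(y)\subset\tilde{\Lambda}$. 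Transitivity follows from the mixing of $\Lambda(1)$ and $\Lambda(2)$ combined with the fact that both transitions $\tau_x$ and $\tau_y$ are available: to join any two cylinders of $\tilde{\Lambda}$ one can step into $\Lambda(1)$, use transitivity there, traverse $\tau_x$, use transitivity in $\Lambda(2)$, etc. Hence $\tilde{\Lambda}$ is a subhorseshoe of $\Lambda$.

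Third, I would bound $m_{\varphi,f}$ on $\tilde{\Lambda}$. Given $p\in\tilde{\Lambda}$ and $n\in\mathbb{Z}$, the point $\varphi^n(p)$ either lies in a rectangle coded by a letter appearing in $\Sigma_1$, in $\Sigma_2$, or in $\tau_x$ or $\tau_y$. By the bounded distortion/shadowing property of the Markov partition, if $\varphi^n(p)$ is in the $j$-th position of a block $w_i^{(k)}$ then it is exponentially close (in $\min(j,|w_i^{(k)}|-j)$) to a point of $\Lambda(i)$; if it lies in a $\tau_x$ or $\tau_y$ block then it is exponentially close to the corresponding point of $\mathcal{O}(x)$ or $\mathcal{O}(y)$, with error independent of the surrounding blocks thanks to local product structure. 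Using the uniform continuity of $f$ on a neighborhood of $\Lambda$ and the fact that $f\le t$ on $\Lambda(1)\cup\Lambda(2)\cup\mathcal{O}(x)\cup\mathcal{O}(y)$, one obtains $f(\varphi^n(p))\le t+\epsilon/2$ for all $n$, provided the Markov partition has been taken sufficiently fine (equivalently, $N$ large enough). Consequently $\tilde{\Lambda}\subset\Lambda_{t+\epsilon/2}$, so with $q=t+\epsilon/2<t+\epsilon$ the definition of connection is satisfied.

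The main obstacle is the bookkeeping in step three: ensuring the shadowing error is uniform over all admissible concatenations, so that the transition segments $\tau_x,\tau_y$ inside an arbitrary itinerary do not produce $f$-values larger than on $\mathcal{O}(x),\mathcal{O}(y)$ by more than $\epsilon/2$. This is where the local product structure of $\Lambda$ and the freedom to take the refinement parameter $N$ arbitrarily large are essential; the mixing hypothesis on $\Lambda(1),\Lambda(2)$ plays the complementary role of guaranteeing the transitivity of $\tilde{\Lambda}$.
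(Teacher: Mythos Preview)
The paper does not prove Proposition \ref{connection11}; it is quoted from \cite{GC}, so there is no in-paper argument to compare against. Your plan is the standard construction and is essentially correct: extract finite transition words from the heteroclinic orbits, build a subshift by concatenation, and control $f$ via uniform continuity together with the hypothesis $\Lambda(1)\cup\Lambda(2)\cup\mathcal{O}(x)\cup\mathcal{O}(y)\subset\Lambda_t$.

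Two small points deserve tightening. First, subhorseshoes in this paper are only assumed \emph{transitive}, not mixing; your argument for the transitivity of $\tilde{\Lambda}$ goes through verbatim under that weaker hypothesis, so simply replace ``mixing'' by ``transitive'' (and if either $\Lambda(i)$ happens to be a periodic orbit, the concatenation scheme still works). Second, the set of bi-infinite concatenations you describe is a priori only a sofic shift, whereas a subhorseshoe must arise as $M(X)$ for a finite word collection $X$. The clean fix is to let $X$ be the set of all $(2N'+1)$-words occurring in your concatenations, with $N'$ chosen from the uniform continuity of $f$ so that agreement on a central $(2N'+1)$-block forces $|f(\cdot)-f(\cdot)|<\epsilon/2$. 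Your shadowing argument then gives $M(X)\subset\Lambda_{t+\epsilon/2}$ directly, and by Proposition \ref{appendix} the transitive piece of $M(X)$ containing $\Lambda(1)$ also contains $\Lambda(2)$ (since you have exhibited orbits passing from each to the other inside $M(X)$); that piece is the required subhorseshoe.
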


 \begin{corollary}\label{connection3}
 Let $\Lambda(1)$,\ $\Lambda(2)$ and $\Lambda(3)$ subhorseshoes of $\Lambda$ and $t\in \mathbb{R}$. If $\Lambda(1)$\ connects with $\Lambda(2)$ before $t$ and $\Lambda(2)$\ connects with $\Lambda(3)$ before $t$. Then also $\Lambda(1)$\ connects with $\Lambda(3)$ before $t$.
 \end{corollary}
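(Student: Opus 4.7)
The plan is to verify the hypotheses of Proposition~\ref{connection11} applied to $\Lambda(1)$ and $\Lambda(3)$. By hypothesis there exist subhorseshoes $\tilde{\Lambda}_{12}, \tilde{\Lambda}_{23} \subset \Lambda$ and levels $q_{12}, q_{23} < t$ with $\Lambda(1) \cup \Lambda(2) \subset \tilde{\Lambda}_{12} \subset \Lambda_{q_{12}}$ and $\Lambda(2) \cup \Lambda(3) \subset \tilde{\Lambda}_{23} \subset \Lambda_{q_{23}}$. Set $q := \max\{q_{12}, q_{23}\} < t$ and fix $\delta > 0$ with $q + 2\delta < t$. Since each $\tilde{\Lambda}_{ij}$ is transitive and contains $\Lambda(i)$ and $\Lambda(j)$ as invariant subsets, its symbolic description as a subshift of finite type provides admissible paths in both directions between the cylinders representing $\Lambda(i)$ and $\Lambda(j)$; from these I extract heteroclinic points $a_1 \in W^u(\Lambda(1)) \cap W^s(\Lambda(2)) \cap \tilde{\Lambda}_{12}$, $b_1 \in W^u(\Lambda(2)) \cap W^s(\Lambda(1)) \cap \tilde{\Lambda}_{12}$, and analogously $a_2 \in W^u(\Lambda(2)) \cap W^s(\Lambda(3)) \cap \tilde{\Lambda}_{23}$, $b_2 \in W^u(\Lambda(3)) \cap W^s(\Lambda(2)) \cap \tilde{\Lambda}_{23}$. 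The orbits of $a_i, b_i$ all lie in $\Lambda_q$.

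Next, fix a periodic point $p \in \Lambda(2)$. I would form a bi-infinite $\eta$-pseudo-orbit by concatenating, in order, a periodic orbit in $\Lambda(1)$, a long finite segment of $\mathcal{O}(a_1)$ running from a neighborhood of $\Lambda(1)$ into a neighborhood of $\Lambda(2)$, many iterates of $p$, a long finite segment of $\mathcal{O}(a_2)$ running from a neighborhood of $\Lambda(2)$ into a neighborhood of $\Lambda(3)$, and finally a periodic orbit in $\Lambda(3)$; choosing the transition segments long enough makes $\eta$ arbitrarily small. Since $\Lambda$ is a locally maximal hyperbolic set, the shadowing lemma produces a true orbit $\mathcal{O}(x)$ with $x \in \Lambda$ that $\eta$-shadows this pseudo-orbit. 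By uniform continuity of $f$ on $S$, I choose $\eta$ small enough that $|f(\varphi^n(x)) - f(y_n)| < \delta$ at every step, where $(y_n)$ denotes the pseudo-orbit; since each $y_n \in \tilde{\Lambda}_{12} \cup \tilde{\Lambda}_{23} \subset \Lambda_q$, this yields $\mathcal{O}(x) \subset \Lambda_{q+\delta}$. By construction $\alpha(x) \subset \Lambda(1)$ and $\omega(x) \subset \Lambda(3)$, so $x \in W^u(\Lambda(1)) \cap W^s(\Lambda(3))$. The symmetric construction using $b_2$ and $b_1$ in place of $a_1$ and $a_2$ yields $y \in W^u(\Lambda(3)) \cap W^s(\Lambda(1))$ with $\mathcal{O}(y) \subset \Lambda_{q+\delta}$.

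With these two heteroclinic points in hand, $\Lambda(1) \cup \Lambda(3) \cup \mathcal{O}(x) \cup \mathcal{O}(y) \subset \Lambda_{q+\delta}$, so Proposition~\ref{connection11} gives, for every $\varepsilon > 0$, that $\Lambda(1)$ and $\Lambda(3)$ connect before $q + \delta + \varepsilon$. Choosing $\varepsilon < t - q - \delta$ completes the argument. The main technical obstacle is the quantitative balancing in the second paragraph: one must calibrate the shadowing precision $\eta$ (which in turn forces the lengths of the central $\mathcal{O}(p)$ segment and of the heteroclinic transition segments) against the modulus of uniform continuity of $f$ needed to keep the shadowing orbit inside $\Lambda_{q+\delta}$, while ensuring that the shadowing point's asymptotics are genuinely in $W^u(\Lambda(1))$ and $W^s(\Lambda(3))$ rather than merely near them. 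Both points are standard consequences of hyperbolicity and local maximality of $\Lambda$, but together they form the real content of the proof.
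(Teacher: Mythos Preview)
Your argument is correct in outline, and the shadowing construction you sketch does produce the required heteroclinic points (there is a small ordering issue: you should fix the periodic point $p\in\Lambda(2)$ \emph{before} constructing $a_1,a_2$, choosing $a_1\in W^u(\Lambda(1))\cap W^s(p)$ and $a_2\in W^u(p)\cap W^s(\Lambda(3))$, so that the jumps in the pseudo-orbit at the middle splice are genuinely small; otherwise $\varphi^N(a_1)$ lands near $\Lambda(2)$ but not necessarily near $p$). With that adjustment everything goes through.

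However, the paper records this as a \emph{corollary} of Proposition~\ref{connection11} (the proof is deferred to \cite{GC}), and there is a much shorter route that explains the ``corollary'' label: apply Proposition~\ref{connection11} not to $\Lambda(1)$ and $\Lambda(3)$ but directly to the two connecting subhorseshoes $\tilde{\Lambda}_{12}$ and $\tilde{\Lambda}_{23}$. Any point $p\in\Lambda(2)\subset\tilde{\Lambda}_{12}\cap\tilde{\Lambda}_{23}$ trivially lies in $W^u(\tilde{\Lambda}_{12})\cap W^s(\tilde{\Lambda}_{23})$ and in $W^u(\tilde{\Lambda}_{23})\cap W^s(\tilde{\Lambda}_{12})$, and $\tilde{\Lambda}_{12}\cup\tilde{\Lambda}_{23}\cup\mathcal{O}(p)\subset\Lambda_q$ with $q=\max\{q_{12},q_{23}\}<t$. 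Proposition~\ref{connection11} then yields, for any $\epsilon\in(0,t-q)$, a subhorseshoe $\tilde{\Lambda}$ with $\Lambda(1)\cup\Lambda(3)\subset\tilde{\Lambda}_{12}\cup\tilde{\Lambda}_{23}\subset\tilde{\Lambda}\subset\Lambda_{q'}$ for some $q'<q+\epsilon<t$. This bypasses the entire shadowing/pseudo-orbit construction: the heteroclinic points are supplied for free by the common piece $\Lambda(2)$. Your approach manufactures heteroclinic points between $\Lambda(1)$ and $\Lambda(3)$ by hand; the shorter argument exploits that Definition~\ref{conection of horseshoes1} already packages such points inside $\tilde{\Lambda}_{12}$ and $\tilde{\Lambda}_{23}$, and that these two subhorseshoes overlap.
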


\section{Proof of the main theorems}
For Theorem \ref{T1}, given $\eta\in(0,\frac{1}{2}HD(\Lambda)]$, we show that  $\eta^-\in \mathcal{L}_{\varphi,\Lambda,f}$ satisfies $\eta=D_{\varphi,\Lambda,f}(\eta^-)=HD(k_{\varphi,\Lambda,f}^{-1}(\eta^-))$. Then we prove that (modulus some countable subset), these are all the points $t\in \mathcal{L}_{\varphi,\Lambda,f}$ that satisfy $D_{\varphi,\Lambda,f}(t)=HD(k_{\varphi,\Lambda,f}^{-1}(t))>0$. For Theorem \ref{T2}, we use the previous theorem and that generically one has the equality $L_{\varphi,\Lambda,f}=2D_{\varphi,\Lambda,f}$.

\subsection{Relation between dimensions}
Fix $\varphi\in \text{Diff}^{2}_{\omega}(S)$ with a mixing horseshoe $\Lambda$. We start the proof by relating the functions $D_{\varphi,\Lambda,f}$ and $R_{\varphi,\Lambda,f}$ for $f\in\mathcal{P}^r_{\varphi,\Lambda}$
\begin{proposition}\label{R}
Given $f\in\mathcal{P}^r_{\varphi,\Lambda}$ and $t\in \mathbb{R}$, one has
$$D_{\varphi,\Lambda,f}(t)=HD(k^{-1}_{\varphi,\Lambda,f}(-\infty,t])=\frac{1}{2}R_{\varphi,\Lambda,f}(t).$$
\end{proposition}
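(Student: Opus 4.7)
The plan is to pass through Theorem~\ref{c1}, which furnishes both $R_{\varphi,\Lambda,f}(t)=2D_u(\Lambda_t)$ and the continuity of $s\mapsto D_u(\Lambda_s)$; it then suffices to prove that $HD(k^{-1}_{\varphi,\Lambda,f}(-\infty,t))$ and $HD(k^{-1}_{\varphi,\Lambda,f}(-\infty,t])$ both coincide with $D_u(\Lambda_t)$. The lower bound comes from the inclusions $\pi^u(\Lambda_s)\subset k^{-1}_{\varphi,\Lambda,f}(-\infty,t)$ for every $s<t$ and $\pi^u(\Lambda_t)\subset k^{-1}_{\varphi,\Lambda,f}(-\infty,t]$: indeed if $\tilde x\in\Lambda_s$ then $f(\varphi^n(\tilde x))\le s$ for every $n\in\mathbb{Z}$, so $k_{\varphi,\Lambda,f}(\pi^u(\tilde x))\le s$. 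Since $HD(\pi^u(\Lambda_s))=D_u(\Lambda_s)$, letting $s\nearrow t$ and invoking continuity of $D_u(\Lambda_\cdot)$ yields $HD(k^{-1}_{\varphi,\Lambda,f}(-\infty,t))\ge D_u(\Lambda_t)$, and monotonicity gives the analogous inequality for the half-closed interval.

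For the upper bound, fix $\epsilon>0$ and set
$$W^+_{t+\epsilon}:=\{\tilde y\in\Lambda : f(\varphi^n(\tilde y))\le t+\epsilon\ \forall n\ge 0\},\qquad B_{N,\epsilon}:=\pi^u(\varphi^{-N}(W^+_{t+\epsilon})).$$
If $k_{\varphi,\Lambda,f}(x)\le t$, then unwrapping the $\limsup$ shows that for every $\epsilon>0$ some $N$ works and $x\in B_{N,\epsilon}$, so $k^{-1}_{\varphi,\Lambda,f}(-\infty,t]\subset\bigcap_{\epsilon>0}\bigcup_{N\ge 0}B_{N,\epsilon}$. The semiconjugacy $\psi_u\circ\pi^u=\pi^u\circ\varphi$ forces $B_{N,\epsilon}\subset\psi_u^{-N}(B_{0,\epsilon})$, and since each branch of $\psi_u^{-N}$ is a contraction, $HD(B_{N,\epsilon})\le HD(B_{0,\epsilon})$. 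The whole task thus reduces to proving $HD(B_{0,\epsilon})\le D_u(\Lambda_{t+2\epsilon})$ and then letting $\epsilon\to 0^+$.

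The heart of the argument is this last inequality, for which I would use Proposition~\ref{appendix}. Select a hyperbolic set of finite type $M(X)$ with $\Lambda_{t+\epsilon}\subset M(X)\subset\Lambda_{t+2\epsilon}$---taking the admissible words in $X$ long enough that the uniform continuity of $f$ pushes the $f$-values on each associated rectangle below $t+2\epsilon$ whenever the central orbit piece lies in $f^{-1}(-\infty,t+\epsilon]$---and decompose $M(X)=\bigcup_{i\in\mathcal{I}}\tilde\Lambda_i$ into subhorseshoes and transient components. For every $\tilde x\in W^+_{t+\epsilon}$, $\omega(\tilde x)$ is a compact, chain-transitive, $\varphi$-invariant subset of $\Lambda_{t+\epsilon}\subset M(X)$; since transient components fail chain-transitivity, $\omega(\tilde x)$ sits inside a single subhorseshoe $\tilde\Lambda_{i(\tilde x)}$. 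The forward orbit of $\tilde x$ eventually enters a local product neighborhood of $\tilde\Lambda_{i(\tilde x)}$, producing some $n_0=n_0(\tilde x)\ge 0$ with $\varphi^{n_0}(\tilde x)$ on a local stable leaf of a point of $\tilde\Lambda_{i(\tilde x)}$. Points on a common local stable leaf share future symbols and so are identified by $\pi^u$; hence $\psi_u^{n_0}(\pi^u(\tilde x))\in\pi^u(\tilde\Lambda_{i(\tilde x)})$, and
$$B_{0,\epsilon}\subset\bigcup_{i\in\mathcal{I}}\bigcup_{n\ge 0}\psi_u^{-n}(\pi^u(\tilde\Lambda_i)),$$
a countable union of Lipschitz preimages of sets of Hausdorff dimension at most $D_u(M(X))\le D_u(\Lambda_{t+2\epsilon})$. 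Letting $\epsilon\to 0^+$ and appealing to continuity of $D_u(\Lambda_\cdot)$ at $t$ then closes the argument.

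The principal obstacle, I expect, is the construction of the finite-type enclosure with the double-sided $f$-control $\Lambda_{t+\epsilon}\subset M(X)\subset\Lambda_{t+2\epsilon}$, together with the verification that $\omega(\tilde x)$ really concentrates on a single subhorseshoe rather than drifting through a transient component of $M(X)$; the remaining ingredients---bounded distortion, continuity of $D_u(\Lambda_\cdot)$, and the Lipschitz behavior of each branch of $\psi_u^{-n}$---are already at hand.
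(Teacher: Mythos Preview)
Your argument is correct, but it is considerably heavier than the paper's. The paper bypasses the finite-type machinery entirely with a direct splicing trick: given $\tilde x$ with $\ell_{\varphi,f}(\tilde x)=\eta<t$, one extracts a limit point $y$ of $(\varphi^{n_k}(\tilde x))$, observes that $m_{\varphi,f}(y)=\eta$ (any larger value along the orbit of $y$ would propagate back to the forward orbit of $\tilde x$ by continuity, contradicting the $\limsup$), and then concatenates the past of $y$ with the future of $\varphi^{n_N}(\tilde x)$ for suitably large $N$. The resulting point $\tilde y$ lies in $\Lambda_{(t+\eta)/2}$ and satisfies $\pi^u(\tilde y)=\psi_u^{n_N}(\pi^u(\tilde x))$, which gives the clean inclusion $k^{-1}_{\varphi,\Lambda,f}(-\infty,t)\subset\bigcup_{n}\psi_u^{-n}(\pi^u(\Lambda_t))$ directly, without any $\epsilon$ and without invoking Proposition~\ref{appendix} or the chain-transitivity of $\omega$-limit sets. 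Your route via $W^+_{t+\epsilon}$, the enclosure $\Lambda_{t+\epsilon}\subset M(X)\subset\Lambda_{t+2\epsilon}$, and the subhorseshoe/transient decomposition does work and has the virtue of rehearsing exactly the machinery the paper deploys later (in the analysis of $\mathcal{J}^0_{\varphi,\Lambda,f}$ and the proof of Theorem~\ref{T2}), but for this proposition it is overkill: the ``principal obstacle'' you flag is real but avoidable, since the single-point splice produces a point of $\Lambda_s$ for some $s<t$ in one stroke.
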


\begin{proof}
Let $x\in \Lambda$ with $\ell_{\varphi,f}(x)=\eta < t$, then there exist a sequence $\{ n_k \}_{k\in\mathbb{N}}$ such that $\lim \limits_{k \to \infinity} f(\varphi^{n_k}(x))=\eta$. By compactness, without loss of generality, we can also suppose that $\lim \limits_{k \to \infinity} \varphi^{n_k}(x)=y$ for some $y\in \Lambda$ and so that $f(y)=\eta$. We affirm that $m_{\varphi, f}(y)=\eta$: in other case we would have for some $\tilde{k} \in \mathbb{Z}$ and $q \in \mathbb{R}$, $f(\varphi^{\tilde{k}}(y))>q>\eta$ and then for $k$ big enough by continuity $f(\varphi^{\tilde{k}+n_k}(x))>\eta$ that contradicts the definition of $\eta$.

Now, consider $\tilde{N}$ big enough such that if for two elements $a, b\in \Lambda$ their kneading sequences coincide in the central block (centered at the zero position) of size $2\tilde{N}+1$ then $\abs{f(a)-f(b)}<(t-\eta)/4$ and $N$ big enough such that for $k\geq N$ one has $f(\varphi^{k}(x))<\eta+(t-\eta)/4$ and the kneading sequences of $\varphi^{n_k}(x)$ and $y$ coincide in the central block of size $2\tilde{N}+1$. Suppose $\Pi(x)= (x_n)_{n\in \mathbb{Z}}$ and $\Pi(y)= (y_n)_{n\in \mathbb{Z}}$, then the point
$$\tilde{y}=\Pi^{-1}(\dots,y_{-n},\dots,y_{-1};x_{n_N},x_{n_N+1},\dots)$$
satisfies $\tilde{y}\in \Lambda_{(t+\eta)/2}$ and $\ell_{\varphi,f}(\tilde{y})=\eta.$ 

Therefore, $w=\pi^u(x)$ satisfies that 
$$\psi^{n_N}_u(w)=\pi^u(\tilde{y})\in \pi^u(\Lambda_{(t+\eta)/2}) \subset \bigcup\limits_{s<t}\pi^u(\Lambda_s)\subset \pi^u(\Lambda_t).$$ 
This implies that $k^{-1}_{\varphi,\Lambda,f}(-\infty,t)\subset \bigcup\limits_{n\in \mathbb{N}}\psi_u^{-n}(\pi^u(\Lambda_t))$ and then $HD(k^{-1}_{\varphi,\Lambda,f}(-\infty,t))\leq HD(\pi^u(\Lambda_t))\leq D_u(\Lambda_t)$. As always is true that $\bigcup\limits_{s<t}\pi^u(\Lambda_s)\subset k^{-1}_{\varphi,\Lambda,f}(-\infty,t)$, because theorem \ref{c1}, one has
\begin{eqnarray*}
\frac{1}{2}R_{\varphi,\Lambda,f}(t)&=&D_u(\Lambda_t)=\sup\limits_{s<t}D_u(\Lambda_s)\leq HD(k^{-1}_{\varphi,\Lambda,f}(-\infty,t))\\ &\leq& HD(k^{-1}_{\varphi,\Lambda,f}(-\infty,t])\leq \inf\limits_{s>t}HD(k^{-1}_{\varphi,\Lambda,f}(-\infty,s))\\&\leq&\inf\limits_{s>t}\frac{1}{2}R_{\varphi,\Lambda,f}(s)=\frac{1}{2}R_{\varphi,\Lambda,f}(t), 
\end{eqnarray*}
as we wanted to see.
\end{proof}
\begin{remark}\label{r1}
From the proof of the proposition we get for $f\in\mathcal{P}^r_{\varphi,\Lambda}$ and any $t\in \mathbb{R}$ that
$$k^{-1}_{\varphi,\Lambda,f}(-\infty,t)\subset \bigcup\limits_{n\in \mathbb{N}}\psi_u^{-n}(\pi^u(\Lambda_t)).$$
\end{remark}
\begin{corollary}
If $f\in\mathcal{P}^r_{\varphi,\Lambda}$ and $t\in \mathbb{R}$ is such that $R_{\varphi,\Lambda,f}(t)=0$, then 
$$D_{\varphi,\Lambda,f}(t)=HD(k_{\varphi,f}^{-1}(t))=0.$$
\end{corollary}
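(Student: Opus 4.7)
The proof of this corollary should be essentially immediate from Proposition \ref{R}. The plan is to read off both statements directly from the chain of equalities established there, without having to redo any dimension calculation.

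More precisely, Proposition \ref{R} asserts not only that $D_{\varphi,\Lambda,f}(t) = \frac{1}{2} R_{\varphi,\Lambda,f}(t)$ but in fact the stronger equality
$$D_{\varphi,\Lambda,f}(t) = HD(k^{-1}_{\varphi,\Lambda,f}(-\infty,t]) = \tfrac{1}{2} R_{\varphi,\Lambda,f}(t).$$
So first I would invoke the hypothesis $R_{\varphi,\Lambda,f}(t) = 0$ to conclude immediately that $D_{\varphi,\Lambda,f}(t) = 0$ and also that $HD(k^{-1}_{\varphi,\Lambda,f}(-\infty,t]) = 0$.

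Then, to pass from the closed half-line to the level set, I would use the trivial inclusion $k^{-1}_{\varphi,\Lambda,f}(t) \subset k^{-1}_{\varphi,\Lambda,f}(-\infty,t]$ together with the monotonicity of Hausdorff dimension under inclusion. This gives $HD(k^{-1}_{\varphi,\Lambda,f}(t)) \le HD(k^{-1}_{\varphi,\Lambda,f}(-\infty,t]) = 0$, and hence equality to zero, which finishes the proof.

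There is no real obstacle here: the key conceptual content is already packed into Proposition \ref{R}, whose proof uses the symbolic sliding trick to compare $k^{-1}_{\varphi,\Lambda,f}(-\infty,t)$ with the backward $\psi_u$-orbit of $\pi^u(\Lambda_t)$ together with Theorem \ref{c1}. The only thing worth emphasizing in the write-up is that the statement of Proposition \ref{R} is genuinely needed in its stronger form (with the closed interval $(-\infty,t]$), since the mere definition $D_{\varphi,\Lambda,f}(t) = HD(k^{-1}(-\infty,t))$ would not by itself bound $HD(k^{-1}(t))$.
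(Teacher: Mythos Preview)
Your argument is correct and is exactly the intended one: the paper states the corollary immediately after Proposition~\ref{R} without proof, precisely because it follows from $HD(k^{-1}_{\varphi,\Lambda,f}(-\infty,t])=\tfrac{1}{2}R_{\varphi,\Lambda,f}(t)=0$ together with the trivial inclusion $k^{-1}_{\varphi,\Lambda,f}(t)\subset k^{-1}_{\varphi,\Lambda,f}(-\infty,t]$. Your remark that one needs the closed half-line version of Proposition~\ref{R} is also on point.
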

\subsection{Sequence of subhorseshoes}\label{horse}

First, observe that $D_{\varphi,\Lambda,f}(\mathbb{R})=\frac{1}{2}R_{\varphi,\Lambda,f}(\mathbb{R})=[0,\frac{1}{2}HD(\Lambda)]$. As in the statement of Theorem \ref{T1}, let us consider the set 
$$\mathcal{J}_{\varphi,\Lambda,f}=\{\eta^-:\eta\in(0,\frac{1}{2}HD(\Lambda)]\},$$
where $\eta^-=\min\{t\in \mathbb{R}:D_{\varphi,\Lambda,f}(t)=\eta\}$ for $\eta\in(0,\frac{1}{2}HD(\Lambda)]$ . Clearly, $D_{\varphi,\Lambda,f}(\mathcal{J}_{\varphi,\Lambda,f})=(0,\frac{1}{2}HD(\Lambda)]$ and $D_{\varphi,\Lambda,f}|_{\mathcal{J}_{\varphi,\Lambda,f}}$ is strictly increasing.

Fix $\eta$, as before, and $\epsilon>0$ such that $D_{\varphi,\Lambda,f}(\eta^--\epsilon)>0.999D_{\varphi,\Lambda,f}(\eta^-)$. By definition, one can find a strictly increasing sequence $\{t_n\}_{n\geq 0}$ such that  $\lim\limits_{n\rightarrow\infty}t_n=\eta^-$, $t_0=\eta^--\epsilon$ and $D_{\varphi,\Lambda,f}$ is injective on $\{t_n:n\geq 0\}$. 

Now, proposition $1$ of \cite{GCD} applied to $t_{n+1}$ and $\eta_n=1-R_{\varphi,\Lambda,f}(t_n)/R_{\varphi,\Lambda,f}(t_{n+1})$ let us find $\delta_n>0$ and some subhorseshoe $\Lambda^n\subset \Lambda_{t_{n+1}-\delta_n}$ with
\begin{eqnarray*}
HD(\Lambda^n)=2HD(K^u(\Lambda^n))&>&2(1-\eta_n)D_u(\Lambda_{t_{n+1}})=(1-\eta_n)HD(\Lambda_{t_{n+1}})\\&=&(1-\eta_n)R_{\varphi,\Lambda,f}(t_{n+1})=R_{\varphi,\Lambda,f}(t_n)
\end{eqnarray*}
and in particular $\max f|_{\Lambda^n}>t_n$. Also, for $n\geq 0$
\begin{equation}\label{estima1}
D_u(\Lambda^n)=\frac{1}{2}HD(\Lambda^n)>\frac{1}{2}R_{\varphi,\Lambda,f}(t_n)\geq\frac{1}{2}R_{\varphi,\Lambda,f}(t_0)=D_u(\Lambda_{t_0})>0.999D_u(\Lambda_{\eta^-}).
\end{equation}
Now, take $r_0$ big enough such that $2^{2024}<\abs{\mathcal{C}_u(\Lambda_{\eta^-},r_0)}$ and
\begin{equation}\label{estima3}
  \frac{\log \abs{\mathcal{C}_u(\Lambda_{\eta^-},r_0)}}{r_0-c_1}<1.001 D_u(\Lambda_{\eta^-}). 
\end{equation}
We set $\mathcal{B}_0=\mathcal{C}_u(\Lambda_{\eta^-},r_0)$, $N_0=\abs{\mathcal{C}_u(\Lambda_{\eta^-},r_0)}$ and for $n\geq 0$ and $M\in \mathbb{N}$ define the set
$$\mathcal{B}_M(\Lambda^n)=\{\beta=\beta_1\dots\beta_M :\forall \, 1\leq j\leq M, \,\, \beta_j\in\mathcal{B}_0 \,\,  \textrm{ and } \,\, \pi^u(\Lambda^n)\cap I^u(\beta)\neq\emptyset\}.$$

Before continuing, we introduce some notation. Consider $\beta=\beta_{k_1}\beta_{k_2}...\beta_{k_ \ell}=a_1...a_p \in \mathcal{A}^{p}, \ \beta_{k_i}\in \mathcal{B}_0, \ 1\le i\le \ell$. We say that $n\in \{1,...,p\}$ is the nth position of $\beta$. If $\beta_{k_i}\in \mathcal{A}^{n_{k_{i}}} $ we write $|\beta_{k_i}|=n_{k_i}$ for its length and $P(\beta_{k_i})=\{1,2,...,n_{k_i}\}$ for its set of positions as a word in the alphabet $\mathcal{A}$ and given $s \in P(\beta_{k_i})$ we call  $P(\beta,k_i;s)=n_{k_1}+...+n_{k_{i-1}}+s$ the position in $\beta$ of the position $s$ of $\beta_{k_i}$.

Recall that the sizes of the intervals $I^u(\alpha)$ behave essentially submultiplicatively due the bounded distortion property of $\psi_u$ (equation (\ref{bdp1})) so that, one has   
$$|I^u(\beta)|\leq \exp(-M(r_0-c_1))$$
for any $\beta\in\mathcal{B}_M(\Lambda^n)$, and thus, $\{I^u(\beta):\beta\in \mathcal{B}_M(\Lambda^n)\}$ is a covering of $\pi^u(\Lambda^n)$ by intervals of sizes $\leq \exp(-M(r_0-c_1))$. In particular for $M(\Lambda^n)=M_n$ sufficiently large 

\begin{eqnarray*}
\dfrac{\log\abs{\mathcal{B}_{M_n}(\Lambda^n)}}{\log N_0^{M_n}}&=&\dfrac{\dfrac{\log\abs{\mathcal{B}_{M_n}(\Lambda^n)}}{-\log \exp (-M_n (r_0-c_1))}}{\dfrac{M_n\cdot \log N_0}{M_n(r_0-c_1)}}\\ &\geq& \frac{\dfrac{\log\abs{\mathcal{B}_{M_n}(\Lambda^n)}}{-\log \exp (-M_n (r_0-c_1))}}{1.001 D_u(\Lambda_{\eta^-})} \quad (\textrm{by equation \ref{estima3}}) \\&\geq& \frac{0.999D_u(\Lambda^n)}{1.001 D_u(\Lambda_{\eta^-})} \quad (\textrm{$M_n$ is big}) \\&\geq& \dfrac{0.999\cdot0.999D_u(\Lambda_{\eta^-})}{1.001 D_u(\Lambda_{\eta^-})}\quad (\textrm{by equation \ref{estima1}})\\&\geq& \dfrac{0.999\cdot0.999}{1.001} \\&>& \frac{991}{1000}.
\end{eqnarray*}
Then we have proved the next result 
\begin{lemma}\label{Btam}
Given $n\geq 0$ and $M_n$ large 
$$ \abs{\mathcal{B}_{M_n}(\Lambda^n)}\geq N_0^{\frac{991}{1000}\cdot M_n}.$$
\end{lemma}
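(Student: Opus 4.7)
The plan is to formalize the chain of inequalities displayed immediately before the lemma statement; it is already a blueprint, and what is left is to fill in the logical skeleton that makes it work. The core idea is that $\{I^u(\beta):\beta\in\mathcal{B}_{M_n}(\Lambda^n)\}$ is a cover of $\pi^u(\Lambda^n)$ by intervals of mesh $\leq\exp(-M_n(r_0-c_1))$, so its cardinality is controlled from below by the limit capacity $D_u(\Lambda^n)$ at that scale.

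First I would verify the covering property. Since $\Lambda^n\subset\Lambda_{t_{n+1}-\delta_n}\subset\Lambda_{\eta^-}$ (because $t_{n+1}<\eta^-$), one has $\pi^u(\Lambda^n)\subset\pi^u(\Lambda_{\eta^-})\subset\bigcup_{\beta_1\in\mathcal{B}_0}I^u(\beta_1)$. Pulling back the cover $\mathcal{B}_0$ of $\pi^u(\Lambda_{\eta^-})$ under the inverse branches of $\psi_u^{|\beta_1|}$ and iterating $M_n$ times, every $x\in\pi^u(\Lambda^n)$ lies in $I^u(\beta)$ for some admissible word $\beta=\beta_1\cdots\beta_{M_n}$ with each $\beta_j\in\mathcal{B}_0$, hence $\beta\in\mathcal{B}_{M_n}(\Lambda^n)$. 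Next, the bounded distortion estimate \eqref{bdp1}, iterated, together with $|I^u(\beta_j)|\leq e^{-r_0}$ (from $\beta_j\in P^u_{r_0}$), gives
$$|I^u(\beta)|\;\leq\; e^{c_1(M_n-1)}\prod_{j=1}^{M_n}|I^u(\beta_j)|\;\leq\;\exp(-M_n(r_0-c_1))$$
for every $\beta\in\mathcal{B}_{M_n}(\Lambda^n)$, as asserted in the text.

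The crux of the argument is the lower bound
$$\frac{\log|\mathcal{B}_{M_n}(\Lambda^n)|}{M_n(r_0-c_1)}\;\geq\; 0.999\,D_u(\Lambda^n)\qquad(M_n\text{ large}).$$
I would derive this by invoking the fact that $\pi^u(\Lambda^n)$ is a $C^{1+\alpha}$ dynamically defined Cantor set, so its upper and lower limit capacities agree with $D_u(\Lambda^n)$. Concretely, I would compare $\mathcal{B}_{M_n}(\Lambda^n)$ to the canonical family $\mathcal{C}_u(\Lambda^n,\lfloor M_n(r_0-c_1)\rfloor)$: each word in $\mathcal{C}_u(\Lambda^n,r)$ at the scale $r=M_n(r_0-c_1)$ is a prefix of at most a uniformly bounded number of words in $\mathcal{B}_{M_n}(\Lambda^n)$, with the constant coming from the uniform bounded distortion of $\psi_u$ (equations \eqref{bdp1}--\eqref{bdp3}). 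This multiplicative loss is absorbed into the factor $0.999$ as $M_n\to\infty$. This is the step I expect to be most delicate, and is where one must be careful that $M_n$ is taken sufficiently large.

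Finally I would substitute into the identity
$$\frac{\log|\mathcal{B}_{M_n}(\Lambda^n)|}{M_n\log N_0}\;=\;\frac{\dfrac{\log|\mathcal{B}_{M_n}(\Lambda^n)|}{M_n(r_0-c_1)}}{\dfrac{\log N_0}{r_0-c_1}},$$
bounding the denominator above by $1.001\,D_u(\Lambda_{\eta^-})$ via \eqref{estima3}, and bounding the numerator below by $0.999\cdot 0.999\,D_u(\Lambda_{\eta^-})$ via the previous step and \eqref{estima1}. The resulting ratio exceeds $991/1000$, which is equivalent to the claim $|\mathcal{B}_{M_n}(\Lambda^n)|\geq N_0^{(991/1000)M_n}$.
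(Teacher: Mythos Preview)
Your proposal is correct and follows essentially the same route as the paper: the displayed chain of inequalities preceding the lemma \emph{is} the paper's proof, and you have simply supplied the logical scaffolding the paper leaves implicit (why $\{I^u(\beta)\}_{\beta\in\mathcal{B}_{M_n}(\Lambda^n)}$ actually covers $\pi^u(\Lambda^n)$, and why the covering bound at scale $\exp(-M_n(r_0-c_1))$ yields $\frac{\log|\mathcal{B}_{M_n}(\Lambda^n)|}{M_n(r_0-c_1)}\geq 0.999\,D_u(\Lambda^n)$ for large $M_n$). One small wording issue: your prefix comparison between $\mathcal{B}_{M_n}(\Lambda^n)$ and $\mathcal{C}_u(\Lambda^n,\lfloor M_n(r_0-c_1)\rfloor)$ is stated in the wrong direction; the clean statement is that each $I^u(\beta)$, having length $\leq e^{-r}$, can meet at most a bounded number of the pairwise disjoint intervals $I^u(\alpha)$ with $\alpha\in\mathcal{C}_u(\Lambda^n,r)$, which gives $|\mathcal{B}_{M_n}(\Lambda^n)|\geq C^{-1}|\mathcal{C}_u(\Lambda^n,r)|$ and hence the desired lower bound.
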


Given $n\geq0$ consider $N_n$ large enough such that for two elements $x,y\in \Lambda$ if their kneading sequences coincide in the central block (centered at the zero position) of size $2N_n+1$ then $\abs{f(x)-f(y)}<\delta_n/2$. Now, for $M\in \mathbb{N}$ and $\beta=\beta_1\dots\beta_M \in \mathcal{B}_M(\Lambda^n) $ with $\beta_i\in\mathcal{B}_0$ for all $1\leq i\leq M$, in \cite{C1} is defined the notion of \emph{M-good positions} of $\beta$ for index $j\in\{1,\dots,M\}$ that allows us to have some control over the values that $f$ takes in some rectangles. 

More specifically, in the mentioned paper is defined $k\in \mathbb{N}$ which does not depend on $n$ in such a way that most positions (more than 98\%) of some word $\beta_n \in \mathcal{B}_{5N_n k}(\Lambda^{n})$ are $5N_n k$-good and the next proposition holds 
\begin{proposition}\label{control}
If $\beta_n=\beta^n_1\beta^n_2\dots\beta^n_{5N_nk}$ with $\beta^n_r\in \mathcal{B}_0$ for $i=1, \dots, 5N_nk$ and for some $1<i<j<5N_nk$, the positions $i-1,i,j,j+1$ are $5N_nk$-good positions of $\beta_n$ and $j-i \geq k/(8N_0^2)$. Then for each $i\leq s \leq j$ and $\bar{n} \in P(\beta^n_s)$ if $\zeta=\beta^n_{i-1}\beta^n_{i}\dots\beta^n_j\beta^n_{j+1}$ and $x\in R(\zeta;P(\zeta,s;\bar{n}))\cap \Lambda$, we have $f(x)<t_{n+1}$.
\end{proposition}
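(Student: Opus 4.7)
The plan is to bound $f(x)$ by comparing it, via continuity, to the $f$-value of a point $y\in\Lambda^n$. Since $\Lambda^n\subset\Lambda_{t_{n+1}-\delta_n}$, every such $y$ satisfies $f(y)\le t_{n+1}-\delta_n$. By the very choice of $N_n$, once the kneading sequences of $x$ and $y$ agree in the central block of length $2N_n+1$ we obtain $|f(x)-f(y)|<\delta_n/2$, and hence
\[
f(x)<f(y)+\delta_n/2\le t_{n+1}-\delta_n/2<t_{n+1}.
\]
The whole argument thus reduces to producing a point $y\in\Lambda^n$ whose itinerary $\Pi(y)$ matches $\Pi(x)$ on the central block of length $2N_n+1$.

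To construct such a $y$, I would use the hypothesis $x\in R(\zeta;P(\zeta,s;\bar n))\cap\Lambda$. This places $x$ so that $\Pi(x)$ contains the long word $\zeta=\beta^n_{i-1}\beta^n_i\cdots\beta^n_j\beta^n_{j+1}$ in the prescribed location, with the $0$-th coordinate of $\Pi(x)$ lying at the $\bar n$-th letter of $\beta^n_s$. Since $i\le s\le j$ and $j-i\ge k/(8N_0^2)$, there are at least $\sim k/(8N_0^2)$ blocks $\beta^n_{s'}$ of $\zeta$ to each side of position $s$. Each block has length (as a word in $\mathcal{A}$) bounded below by a multiple of $r_0$ because of \eqref{bdp3}, so by taking $k$ sufficiently large—uniformly in $n$—the stretch of letters of $\Pi(x)$ dictated by $\zeta$ on each side of position $0$ exceeds $N_n$ comfortably, up to the bounded-distortion losses from \eqref{bdp1}.

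The remaining task is to promote this local agreement of $\Pi(x)$ with the \emph{word} $\zeta$ to agreement with the itinerary of an actual point of $\Lambda^n$. This is exactly the purpose of the $5N_nk$-good position property defined in \cite{C1}: a good position in $\beta_n$ marks an index where $\beta_n$ can be bi-infinitely extended to a sequence lying in the symbolic representation of $\Lambda^n$. Applying this to the good positions $i-1$ (to produce a left-extension) and $j+1$ (to produce a right-extension), and using local product structure of $\Lambda^n$ to splice the two extensions at some intermediate position inside $[i,j]$, yields a point $y\in\Lambda^n$ whose itinerary coincides with $\Pi(x)$ on a block containing the whole prescribed central part. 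Since this central part already covers $[-N_n,N_n]$ by the previous paragraph, the desired agreement on a window of length $2N_n+1$ follows.

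The main technical obstacle is the careful bookkeeping: one has to track two different length scales—positions in $\{1,\dots,5N_nk\}$ (counted in subwords of $\mathcal{B}_0$) and positions in the alphabet $\mathcal{A}$—and verify that the combinatorial gap $j-i\ge k/(8N_0^2)$ is large enough, in the $\mathcal{A}$-scale, to simultaneously absorb the $N_n$-sized window required by the choice of $N_n$, the bounded-distortion constants $c_1,\lambda_1,\lambda_2$ from \eqref{bdp1}--\eqref{bdp3}, and the length needed at the extremal good positions $i-1$ and $j+1$ to perform the symbolic extension into $\Lambda^n$. This is where the constant $k$ from \cite{C1}, independent of $n$, gets calibrated, and is the delicate quantitative core of the argument.
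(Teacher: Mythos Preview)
First, note that the paper does not prove this proposition; it is quoted from \cite{C1}, so there is no ``paper's own proof'' to compare against. I therefore assess your sketch on its merits.

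Your high-level strategy---produce $y\in\Lambda^n$ with $\Pi(y)$ matching $\Pi(x)$ on $[-N_n,N_n]$, then use $\Lambda^n\subset\Lambda_{t_{n+1}-\delta_n}$ together with the defining property of $N_n$---is the natural one and is almost certainly what \cite{C1} does. However, the quantitative justification you give has a genuine gap.

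You assert that, since $j-i\ge k/(8N_0^2)$, there are $\sim k/(8N_0^2)$ blocks of $\zeta$ on \emph{each} side of position $s$. This is false: the hypothesis bounds the \emph{total} number of blocks between $i$ and $j$, not the number on each side of $s$. When $s=i$ and $\bar n=1$, the portion of $\zeta$ to the left of position $0$ consists solely of the single block $\beta^n_{i-1}$, whose length in the alphabet $\mathcal A$ is $O(r_0)$ by bounded distortion (since $\beta^n_{i-1}\in\mathcal B_0=\mathcal C_u(\Lambda_{\eta^-},r_0)$). This bound is fixed, independent of both $n$ and $k$; enlarging $k$ does nothing for this side. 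On the other hand, $N_n$ is chosen so that $(2N_n{+}1)$-agreement forces $|f(\cdot)-f(\cdot)|<\delta_n/2$, and since $\max f|_{\Lambda^n}>t_n$ together with $\Lambda^n\subset\Lambda_{t_{n+1}-\delta_n}$ yields $\delta_n<t_{n+1}-t_n\to 0$, one has $N_n\to\infty$. Hence for large $n$ the $\zeta$-window does \emph{not} cover $[-N_n,N_n]$ in the boundary cases $s\in\{i,j\}$; outside $\zeta$ the itinerary of $x$ is completely arbitrary, so no point $y\in\Lambda^n$ built from $\zeta$ alone can be guaranteed to match $\Pi(x)$ on $[-N_n,N_n]$.

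The repair must therefore use the definition of ``$M$-good position'' from \cite{C1} more substantively than you indicate. In particular, the hypothesis demands that \emph{both} $i-1$ and $i$ are good (and both $j$ and $j+1$), whereas your argument only invokes the outer positions $i-1$ and $j+1$; the pairing is presumably what supplies the missing $N_n$ letters of control on the short side. Without that definition in hand, the sketch as written does not close.
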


Using these results, in \cite{C1} is constructed a function
$$O: \mathbb{N}\cup \{0\} \rightarrow \bigcup\limits_{j=2}^{k-1}\mathcal{B}_0^j,$$
with the property that if for some $m,n\in \mathbb{N}\cup \{0\}$ one has $O(m)=O(n)$ then it is possible to go from $\Lambda^{m}$ to $\Lambda^{n}$ (and also from $\Lambda^{n}$ to $\Lambda^{m}$), without leaving $\Lambda_{\max\{t_{n+1},t_{m+1}\}}$ and staying arbitrarily close to the orbit of the periodic point $p_{n,m}=\Pi^{-1}(\overline{O(n)})=\Pi^{-1}(\overline{O(m)})$ for arbitrarily long times, where $\overline{O(n)}$ is the infinite sequence with period $O(n)$. Then, proposition \ref{connection11} let us conclude

\begin{proposition}\label{contradiction}
 Let $m,n\in \mathbb{N}\cup \{0\}$ such that $O(m)=O(n)$. Then $\Lambda^m$ connects with $\Lambda^n$ before $\max\{t_{n+1},t_{m+1}\}$.
 \end{proposition}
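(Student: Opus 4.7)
The plan is to apply Proposition \ref{connection11} with $(\Lambda(1),\Lambda(2))=(\Lambda^m,\Lambda^n)$. What has to be produced are two heteroclinic points $x \in W^u(\Lambda^m)\cap W^s(\Lambda^n)$ and $y \in W^u(\Lambda^n)\cap W^s(\Lambda^m)$, together with a strict bound $\Lambda^m \cup \Lambda^n \cup \mathcal{O}(x) \cup \mathcal{O}(y) \subset \Lambda_s$ for some $s<\max\{t_{n+1},t_{m+1}\}$; Proposition \ref{connection11} will then yield a subhorseshoe containing $\Lambda^m\cup\Lambda^n$ inside $\Lambda_{s+\epsilon}$ for every $\epsilon>0$, and choosing $\epsilon<\max\{t_{n+1},t_{m+1}\}-s$ gives the desired connection.

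The hypothesis $O(m)=O(n)$ is exactly the input needed to build $x$ and $y$: the common finite word in $\mathcal{B}_0$ determines the periodic point $p_{n,m}=\Pi^{-1}(\overline{O(n)})$ whose orbit, as noted just before the statement, serves as a bridge between $\Lambda^m$ and $\Lambda^n$ without leaving $\Lambda_{\max\{t_{n+1},t_{m+1}\}}$. Concretely, I would take $\Pi(x)$ to be a symbolic sequence whose negative tail is realized in $\Lambda^m$, whose positive tail is realized in $\Lambda^n$, and whose central block is a long power $(O(n))^N$ of the common word; this places $x\in W^u(\Lambda^m)\cap W^s(\Lambda^n)$, and swapping the roles of $m$ and $n$ produces $y$.

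The strict bound $s<\max\{t_{n+1},t_{m+1}\}$ comes from splitting each orbit into three phases. On the $\Lambda^m$-tail and the $\Lambda^n$-tail of the orbit one has, by construction of the subhorseshoes, $f \le \max\{t_{m+1}-\delta_m,t_{n+1}-\delta_n\}$, with uniform slack coming from continuity of $f$ and the choice of sufficiently long tails. On the central bridge, Proposition \ref{control}, applied at the $5N_nk$-good positions built into the definition of $O$, yields $f<t_{n+1}$ on each of the relevant rectangles, and by compactness this strict inequality upgrades to a uniform bound $f\le t_{n+1}-\delta'$ for some $\delta'>0$. The main obstacle I expect is the bookkeeping at the splicing junctions between the three phases: one must check that the splicings are done at $5N_nk$-good positions so that Proposition \ref{control} applies, and that the constants from the bounded-distortion estimate (\ref{bdp1}) are absorbed by the gap $\min\{\delta_n,\delta_m,\delta'\}$, so that the $f$-bound at the junctions does not ruin the strict inequality $s<\max\{t_{n+1},t_{m+1}\}$.
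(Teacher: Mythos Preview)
Your proposal is correct and follows essentially the same route as the paper: build the two heteroclinic points using the common word $O(m)=O(n)$ as a bridge (shadowing the periodic orbit $p_{n,m}$), and then invoke Proposition~\ref{connection11}. The paper's own argument is just the sentence preceding the statement together with ``Then, proposition~\ref{connection11} let us conclude'', so your write-up is in fact more detailed than the paper's. One minor remark: the junction control you worry about does not come from the bounded-distortion estimate~(\ref{bdp1}) (that governs interval lengths, not $f$-values) but from the choice of $N_n$ made just before Proposition~\ref{control}, which guarantees $|f(x)-f(y)|<\delta_n/2$ once the central block of length $2N_n+1$ agrees; that is what buys the uniform slack at the splices.
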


As the function $O$ takes only a finite number of different values, by proposition \ref{contradiction}, without loss of generality, we can suppose that $\Lambda^n$ connects with $\Lambda^m$ before $\max\{t_{n+1},t_{m+1}\}$, for any $n,m\in \mathbb{N}\cup \{0\}$. Corollary \ref{connection3} let us construct inductively a sequence of subhorseshoes $\{\tilde{\Lambda}^n\}_{n\geq0}$ such that for any $n\geq0$ 
\begin{itemize}
    \item $\Lambda^0\cup \dots\cup \Lambda^n\subset \tilde{\Lambda}^n\subset \Lambda_{q_n}$ for some $q_n<t_{n+1}$,
    \item $\tilde{\Lambda}^n\subset \tilde{\Lambda}^{n+1}$.
\end{itemize}
As a consequence 
$$R_{\varphi,\Lambda,f}(t_n)<HD(\Lambda^n)\leq HD(\tilde{\Lambda}^n)\leq R_{\varphi,\Lambda,f}(t_{n+1})$$
and 
$$t_n<\max f|_{\Lambda^n}\leq \max f|_{\tilde{\Lambda}^n}<t_{n+1}$$
we resume our conclusions in the following proposition
\begin{proposition}\label{horseshoes}
Given $\eta\in(0,\frac{1}{2}HD(\Lambda)]$ and $\epsilon>0$ small, one can find a strictly increasing sequence $\{t_n\}_{n\geq 0}$ with $t_0=\eta^--\epsilon$ and a sequence $\{\tilde{\Lambda}^n\}_{n\geq0}$ of subhorseshoes of $\Lambda$ with the following properties 
\begin{itemize}
    \item $\lim\limits_{n\rightarrow\infty}t_n=\eta^-$,
    \item $\tilde{\Lambda}^n\subset \tilde{\Lambda}^{n+1}$,
    \item $t_n<\max f|_{\tilde{\Lambda}^n}<t_{n+1}$,
    \item $R_{\varphi,\Lambda,f}(t_n)<HD(\tilde{\Lambda}^n)\leq R_{\varphi,\Lambda,f}(t_{n+1})$.
\end{itemize}
\end{proposition}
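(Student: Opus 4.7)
The plan is to assemble the pieces set up in the preceding discussion and verify the four bullets one at a time.

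First I would fix $\epsilon>0$ small enough that $D_{\varphi,\Lambda,f}(\eta^--\epsilon)>0.999\,D_{\varphi,\Lambda,f}(\eta^-)$, which is available because, by Proposition \ref{R} combined with Theorem \ref{c1}, $D_{\varphi,\Lambda,f}=\tfrac{1}{2}R_{\varphi,\Lambda,f}$ is continuous. Then I would select a strictly increasing sequence $\{t_n\}_{n\geq 0}$ with $t_0=\eta^--\epsilon$, $t_n\to\eta^-$, and $D_{\varphi,\Lambda,f}$ injective on $\{t_n:n\geq 0\}$. Applying Proposition 1 of \cite{GCD} to each $t_{n+1}$ produces $\delta_n>0$ and a subhorseshoe $\Lambda^n\subset\Lambda_{t_{n+1}-\delta_n}$ with $HD(\Lambda^n)>R_{\varphi,\Lambda,f}(t_n)$; the strict inequality forces $\max f|_{\Lambda^n}>t_n$, since otherwise $\Lambda^n\subset\Lambda_{t_n}$ and monotonicity of $R_{\varphi,\Lambda,f}$ would give the opposite inequality.

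Next I would carry out the combinatorial step that yields the connections among the $\Lambda^n$. The bounded distortion estimates together with the dimension lower bounds give Lemma \ref{Btam}, which in turn is used (following \cite{C1}) to define a fixed integer $k$ and a function $O:\mathbb{N}\cup\{0\}\to\bigcup_{j=2}^{k-1}\mathcal{B}_0^j$ of finite range. For any $n,m$ with $O(n)=O(m)$, Proposition \ref{control} furnishes orbits joining $\Lambda^n$ to $\Lambda^m$ while staying in $\Lambda_{\max\{t_{n+1},t_{m+1}\}}$; Proposition \ref{contradiction} then certifies that $\Lambda^n$ and $\Lambda^m$ connect before $\max\{t_{n+1},t_{m+1}\}$. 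Since $O$ has finite codomain, the pigeonhole principle yields an infinite subset of indices on which $O$ is constant; relabeling by restriction preserves all properties already established (one is free to thin $\{t_n\}$), so I may assume every pair $(\Lambda^n,\Lambda^m)$ connects before $\max\{t_{n+1},t_{m+1}\}$.

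The nested sequence $\{\tilde{\Lambda}^n\}$ is then built by induction. Set $\tilde{\Lambda}^0=\Lambda^0\subset\Lambda_{q_0}$ with $q_0=\max f|_{\Lambda^0}<t_1$. Assuming $\tilde{\Lambda}^n$ has been constructed with $\Lambda^0\cup\cdots\cup\Lambda^n\subset\tilde{\Lambda}^n\subset\Lambda_{q_n}$ and $q_n<t_{n+1}$, a repeated use of Corollary \ref{connection3} merges $\tilde{\Lambda}^n$ and $\Lambda^{n+1}$ into a single subhorseshoe $\tilde{\Lambda}^{n+1}\supset\tilde{\Lambda}^n\cup\Lambda^{n+1}$ contained in $\Lambda_{q_{n+1}}$ for some $q_{n+1}<t_{n+2}$. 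The four bullets are then immediate: $t_n\to\eta^-$ by construction; $\tilde{\Lambda}^n\subset\tilde{\Lambda}^{n+1}$ by design; $t_n<\max f|_{\tilde{\Lambda}^n}<t_{n+1}$ follows from $\Lambda^n\subset\tilde{\Lambda}^n\subset\Lambda_{q_n}$, $\max f|_{\Lambda^n}>t_n$ and $q_n<t_{n+1}$; and $R_{\varphi,\Lambda,f}(t_n)<HD(\Lambda^n)\leq HD(\tilde{\Lambda}^n)\leq HD(\Lambda_{q_n})\leq R_{\varphi,\Lambda,f}(t_{n+1})$ by monotonicity.

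The delicate step, and the one that does the real work, is the construction of the finite-range coding function $O$ together with the application of Proposition \ref{control}. This rests on the $98\%$ good position estimate imported from \cite{C1}, which in turn depends on the bounded distortion inequalities and the lower bound $\abs{\mathcal{B}_{M_n}(\Lambda^n)}\geq N_0^{(991/1000)M_n}$ of Lemma \ref{Btam}; without this combinatorial input, one cannot guarantee that the orbits joining $\Lambda^n$ to $\Lambda^m$ stay below the threshold $\max\{t_{n+1},t_{m+1}\}$, and the inductive merging of the $\tilde{\Lambda}^n$ collapses.
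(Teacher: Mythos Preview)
Your proposal is correct and follows essentially the same route as the paper: choose $\{t_n\}$ with $D_{\varphi,\Lambda,f}$ injective on it, extract the $\Lambda^n$ from Proposition~1 of \cite{GCD}, invoke the combinatorial machinery of \cite{C1} (Lemma~\ref{Btam}, the good-position estimate, and the finite-range function $O$) to get Proposition~\ref{contradiction}, thin by pigeonhole so that all pairs connect, and then merge inductively via Corollary~\ref{connection3}. Your verification of the four bullets, including the chain $R_{\varphi,\Lambda,f}(t_n)<HD(\Lambda^n)\leq HD(\tilde{\Lambda}^n)\leq HD(\Lambda_{q_n})\leq R_{\varphi,\Lambda,f}(t_{n+1})$, matches the paper's reasoning exactly.
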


\begin{remark}
Note that unless passing to the sequence $\{t_{2n}\}_{n\in\mathbb{N}}$ we can suppose that $R_{\varphi,\Lambda,f}(t_n)<HD(\tilde{\Lambda}^n)< R_{\varphi,\Lambda,f}(t_{n+1})$.
\end{remark}

\subsection{Putting unstable Cantor sets into $k^{-1}_{\varphi,\Lambda,f}(\eta^-)$}

Now, we will construct a homeomorphism $\Theta:K^u(\tilde{\Lambda}^0)\rightarrow k^{-1}_{\varphi,\Lambda,f}(\eta^-)$ with H\"older inverse with exponent arbitrarily close to one. By the spectral theorem, we can suppose without loss of generality that the subhorseshoes $\tilde{\Lambda}^n$ of proposition \ref{horseshoes} are mixing.

The argument is similar to that of \cite{GC}, we write all the details here for completeness. Given $n\geq 0$, there exits $c(n)\in \mathbb{N}$ such that given two letters $a$ and $\bar{a}$ in the alphabet $\mathcal{A}(\tilde{\Lambda}^n)$ of $\tilde{\Lambda}^n$ one can find some word $(a_1,\dots ,a_{c(n)})$ in the same alphabet such that $(a,a_1,\dots ,a_{c(n)},\bar{a})$ is admissible. Given $a$ and $\bar{a}$ we will consider always a fixed $(a_1,\dots ,a_{c(n)})$ as before. 

Now, given $n\geq 1$, consider the kneading sequence $\{x^n_r\}_{r\in \mathbb{Z}}$ of some point $x_n\in \tilde{\Lambda}^n$ such that $f(x_n)=\max f|_{\tilde{\Lambda}^n}$. As $\tilde{\Lambda}^n$ is a subhorseshoe of $\Lambda$, it is the invariant set in some rectangles determined for a set of words of size $2p(n)+1$ for some $p(n)\in \mathbb{N}$. Take then $r(n)>p(n+1)+p(n)+p(n-1)$ big enough such that for any $\alpha=(a_0, a_{1} \cdots, a_{2r(n)})\in {\mathcal{A}}^{2r(n)+1}$ and $z,y\in R(\alpha;r(n))$ one has $\abs{f(x)-f(y)}<\min \{(t_{n+1}-f(x_n))/2,(f(x_n)-t_n)/2\}.$ Finally, define 
$$s(n)=\sum_{k=1}^{n}(2r(k)+2c(k)+1).$$

Given $a\in K^u(\tilde{\Lambda}^0)$ with kneading sequence $(a_0,a_1,a_2,\dots )$ (write $a\sim (a_0,a_1,a_2,\dots )$) for $n\geq 1$ set $a^{(n)}=(a_{s(n)!+1},\dots, a_{s(n+1)!})$, so one has
$$a\sim (a_0,a_1,a_2,\dots )=(a_0,a_1,\dots,a_{s(1)!},a^{(1)}, a^{(2)},\dots ,a^{(n)}, \dots).$$ 
Define then $\Theta(a)\in K^u$ by
$$\Theta(a)\sim (a_0,a_1,\dots,a_{s(1)!},h_{1}, a^{(1)},h_{2},a^{(2)},\dots ,h_{n},a^{(n)},h_{n+1}, \dots)$$
where
$$h_n=(c_1^n, x^n_{-r(n)},\dots ,x^n_{-1},x^n_0,x^n_1,\dots,x^n_{r(n)},c_2^n)$$
and $c_1^n$ and $c_2^n$ are words in the original alphabet $\mathcal{A}$ of $\Lambda$ with $\abs{c_1^n}=\abs{c_2^n}=c(n)$ such that $(a_0,a_1,\dots,a_{s(1)!},h_{1}, a^{(1)},h_{2},\dots ,h_{n},a^{(n)})$ appears in the kneading sequence of some point of $\Lambda^n$.

It follows from the construction of $\Theta$ that $k_{\varphi,\Lambda,f}(\Theta(a))=\eta^-$ for every $a\in K^u(\tilde{\Lambda}^0)$, so we have defined the map
\begin{eqnarray*}
  \Theta:K^u(\tilde{\Lambda}^0) &\rightarrow& k^{-1}_{\varphi,\Lambda,f}(\eta^-) \\
   a &\rightarrow& \Theta(a)
\end{eqnarray*}
that is clearly continuous and injective.

On the other hand, if $\tilde{a}_1$ and $\tilde{a}_2$ are such that their kneading sequences are equal up to the $s$-nth letter and $n \in \mathbb{N}$ is maximal such that $s(n)!<s$ then, because $\abs{h_k}=2r(k)+2c(k)+1$, $\Theta(\tilde{a}_1)$ and $\Theta(\tilde{a}_2)$ coincide exactly in their first 
$$s+\sum_{k=1}^{n}2r(k)+2c(k)+1=s+s(n)$$
letters.

So, given $\rho>0$ small, if $s$ is big such that $s(n)/(s+s(n))<\frac{\rho \log \lambda_2}{\log \lambda_1-4c_1}$, using equations \ref{bdp1}, \ref{bdp2} and \ref{bdp3}, we have 

\begin{eqnarray*}
 && \abs{\Theta(\tilde{a}_1)-\Theta(\tilde{a}_2)}^{1-\rho}\\ &\geq& e^{-(1-\rho)c_1}\cdot\abs{I^u(a_1,\dots,a_{s(1)!},h_{1}, a^{(1)},\dots ,a^{(n-1)},h_{n},a_{s(n)!+1},\dots, a_s)}^{1-\rho}\\
  &=& e^{-(1-\rho)c_1}\cdot\abs{I^u(a_1,\dots,a_{s(1)!},h_{1}, a^{(1)},\dots ,a^{(n-1)},h_{n},a_{s(n)!+1},\dots, a_s)}\cdot\\ && \abs{I^u(a_1,\dots,a_{s(1)!},h_{1}, a^{(1)},\dots ,a^{(n-1)},h_{n},a_{s(n)!+1},\dots, a_s)}^{-\rho}\\ &\geq& \frac{e^{-(1-\rho)c_1}}{e^{2nc_1}}\cdot\abs{I^u(a_1,\dots,a_{s(1)!})}\cdot\abs{I^u(a^{(1)})} \dots  
 \abs{I^u(a^{(n-1)})}\cdot \\ &&\abs{I^u(a_{s(n)!+1},\dots, a_s)}\cdot\abs{I^u(h_1)}\dots \abs{I^u(h_n)}\cdot \\&& \abs{I^u(a_1,\dots,a_{s(1)!},h_{1}, a^{(1)},\dots ,a^{(n-1)},h_{n},a_{s(n)!+1},\dots, a_s)}^{-\rho} \\ &\geq&\frac{e^{-(1-\rho)c_1}}{e^{3nc_1}}\cdot\abs{I^u(a_1,a_2,\dots,a_s)}\cdot\abs{I^u(h_1)}\dots \abs{I^u(h_n)}\cdot 
 \\ &&\abs{I^u(a_1,\dots,a_{s(1)!},h_{1}, a^{(1)},\dots a^{(n-1)},h_{n},a_{s(n)!+1},\dots, a_s)}^{-\rho}\\ 
 &\geq&e^{-(1-\rho)c_1}\cdot\abs{I^u(a_1,a_2,\dots,a_s)}\cdot e^{-4nc_1}e^{s(n)\cdot\log \lambda_1}\cdot \\ &&\abs{I^u(a_1,\dots,a_{s(1)!},h_{1}, a^{(1)},\dots a^{(n-1)},h_{n},a_{s(n)!+1},\dots, a_s)}^{-\rho}\\ &\geq&e^{-(1-\rho)c_1}\cdot\abs{I^u(a_1,a_2,\dots,a_s)}\cdot e^{(\log \lambda_1-4c_1)s(n)}\cdot\\ &&\abs{I^u(a_1,\dots,a_{s(1)!},h_{1}, a^{(1)},\dots a^{(n-1)},h_{n},a_{s(n)!+1},\dots, a_s)}^{-\rho}\\ &\geq&e^{-(1-\rho)c_1}\cdot\abs{I^u(a_1,a_2,\dots,a_s)}\cdot e^{\rho  (s+s(n))\log \lambda_2}\cdot\\ &&\abs{I^u(a_1,\dots,a_{s(1)!},h_{1}, a^{(1)},\dots a^{(n-1)},h_{n},a_{s(n)!+1},\dots, a_s)}^{-\rho}\\  
\end{eqnarray*}
  
\begin{eqnarray*}
  &\geq&\frac{e^{-(1-\rho)c_1}}{e^{\rho c_1}}\cdot\abs{I^u(a_1,a_2,\dots,a_s)}\cdot\\
  &&\abs{I^u(a_1,\dots,a_{s(1)!},h_{1}, a^{(1)},\dots ,a^{(n-1)},h_{n},a_{s(n)!+1},\dots, a_s)}^{\rho}\cdot\\ &&\abs{I^u(a_1,\dots,a_{s(1)!},h_{1}, a^{(1)},\dots a^{(n-1)},h_{n},a_{s(n)!+1},\dots, a_s)}^{-\rho}\\ 
  &\geq&e^{-c_1}\cdot \abs{\tilde{a}_1-\tilde{a}_2}.
\end{eqnarray*}

Therefore the map $\Theta^{-1}:\Theta(K^u(\tilde{\Lambda}^0)) \rightarrow K^u(\tilde{\Lambda}^0)$ is a H\"older map with exponent $1-\rho$ and then
\begin{eqnarray*}
   HD(K^u(\tilde{\Lambda}^0))=HD(\Theta^{-1}(\Theta(K^u(\tilde{\Lambda}^0))))&\leq& \frac{1}{1-\rho}\cdot HD(\Theta(K^u(\tilde{\Lambda}^0)))\\ &\leq& \frac{1}{1-\rho}\cdot HD(k^{-1}_{\varphi,\Lambda,f}(\eta^-)).
\end{eqnarray*}
Letting $\rho$ go to zero, we obtain
$$HD(K^u(\tilde{\Lambda}^0))\leq HD(k^{-1}_{\varphi,\Lambda,f}(\eta^-)).$$
Therefore, by proposition \ref{horseshoes}
$$D_{\varphi,\Lambda,f}(\eta^--\epsilon)=\frac{1}{2}R_{\varphi,\Lambda,f}(t_0) < \frac{1}{2}HD(\tilde{\Lambda}^0)= HD(K^u(\tilde{\Lambda}^0))\leq HD(k^{-1}_{\varphi,\Lambda,f}(\eta^-)).$$
Letting $\epsilon$ tend to zero, by proposition \ref{R} we have 
$$HD(k_{\varphi,\Lambda,f}^{-1}(-\infty,\eta^-])=D_{\varphi,\Lambda,f}(\eta^-)\leq HD(k^{-1}_{\varphi,\Lambda,f}(\eta^-))$$
and as the other inequality always is true
$$\eta=D_{\varphi,\Lambda,f}(\eta^-)=HD(k^{-1}_{\varphi,\Lambda,f}(\eta^-)).$$

\begin{remark}
 Given $\eta\in(0,\frac{1}{2}HD(\Lambda)]$ define $\eta^+=\max\{t\in \mathbb{R}:D_{\varphi,\Lambda,f}(t)=\eta\}$. We can ask for a result for $\eta^+$ similar to the result proved for $\eta^-$. However, note that given different $\eta_1,\eta_2\in [0,\frac{1}{2}HD(\Lambda)]$ the intervals $[\eta_1^-,\eta_1^+]$ and $[\eta_2^-,\eta_2^+]$ are disjoint. Then, we can find a countable set $F$ such that for $\eta\in B=(0,\frac{1}{2}HD(\Lambda)] \setminus F$, $\eta^-=\eta^+$. That is, $D_{\varphi,f}|_B$ is injective and for $\eta\in B$
$$\eta=HD(k_{\varphi,\Lambda,f}^{-1}(-\infty,D^{-1}_{\varphi,\Lambda,f}(\eta)])=HD(k_{\varphi,\Lambda,f}^{-1}(D^{-1}_{\varphi,\Lambda,f}(\eta))).$$   
\end{remark}

\subsection{Another presentation for $\mathcal{J}_{\varphi,\Lambda,f}$}

In this subsection, we characterize (modulus some countable subset $\mathcal{F}_{\varphi,\Lambda,f}\subset \mathcal{L}_{\varphi,\Lambda,f}$) the set of elements $t\in \mathcal{L}_{\varphi,\Lambda,f}$ with $D_{\varphi,\Lambda,f}(t)>0$ and such that
\begin{equation}\label{concentration1}
 D_{\varphi,\Lambda,f}(t)=HD(k^{-1}_{\varphi,\Lambda,f}(t)).   
\end{equation}
In fact, we will show that it coincides with the set $\mathcal{J}_{\varphi,\Lambda,f}$ of the subsection \ref{horse}.

Given $\epsilon>0$, we can take $\ell(\epsilon)\in\mathbb{N}$ sufficiently large such that if $\alpha=(a_0, a_{1} \cdots, a_{2\ell(t,\epsilon)})\\ \in \mathcal{A}^{2\ell(\epsilon)+1}$ and $x,y\in R(\alpha;\ell(t,\epsilon))$ then $\abs{f(x)-f(y)}<\epsilon/4$. If $t\in \mathcal{L}_{\varphi,\Lambda,f}$, let 
$$C(t,\epsilon)=\{\alpha=(a_0, a_{1} \cdots, a_{2\ell(t,\epsilon)})\in \mathcal{A}^{2\ell(t,\epsilon)+1}:R(\alpha;\ell(\epsilon))\cap \Lambda_{t+\epsilon/4}\neq \emptyset \}.$$
Define
$$M(t,\epsilon):=M(C(t,\epsilon))=\bigcap \limits_{n \in \mathbb{Z}} \varphi ^{-n}(\bigcup \limits_{\alpha \in C(t,\epsilon)}  R(\alpha;\ell(\epsilon))).$$
Note that by construction, $\Lambda_{t+\epsilon/4}\subset M(t,\epsilon)\subset \Lambda_{t+\epsilon/2}$ and being $M(t,\epsilon)$ a hyperbolic set of finite type (see subsection \ref{tipofinito} for the corresponding definitions and results), it admits a decomposition 
$$M(t,\epsilon)=\bigcup \limits_{x\in \mathcal{X}(t,\epsilon)} \tilde{\Lambda}_x $$
where $\mathcal{X}(t,\epsilon)$ is a finite index set and for $x\in \mathcal{X}(t,\epsilon)$,\ $\tilde{\Lambda}_x$ is a subhorseshoe or a transient set. Now, because of equalities (\ref{transient}) and (\ref{transient2}) we have
\begin{equation}
  R_{\varphi,\Lambda,f}(t)=HD(\Lambda_t)\leq HD(M(t,\epsilon))=\max\limits_{x\in \mathcal{X}(t,\epsilon)} HD(\tilde{\Lambda}_{x})=\max \limits_{\substack{x\in \mathcal{X}(t,\epsilon): \ \tilde{\Lambda}_x \ is\\ subhorseshoe }}HD(\tilde{\Lambda}_{x}).  
\end{equation}
Write
$$\tilde{M}(t,\epsilon)=\bigcup\limits_{\substack{x\in \mathcal{X}(t,\epsilon): \ \tilde{\Lambda}_x \ is\\ subhorseshoe }}\tilde{\Lambda}_x= \bigcup \limits_{x\in \mathcal{I}(t,\epsilon)} \tilde{\Lambda}_x \cup \bigcup \limits_{x\in \mathcal{J}(t,\epsilon)} \tilde{\Lambda}_x$$ where
 $$\mathcal{I}(t,\epsilon)=\{x\in \mathcal{X}(t,\epsilon): \tilde{\Lambda}_x \ \mbox{is a subhorseshoe and}\ \ HD(\tilde{\Lambda}_x)\geq R_{\varphi,\Lambda,f}(t) \}$$
and 
 $$\mathcal{J}(t,\epsilon)=\{x\in \mathcal{X}(t,\epsilon): \tilde{\Lambda}_x \ \mbox{is a subhorseshoe and}\ \ HD(\tilde{\Lambda}_x)< R_{\varphi,\Lambda,f}(t) \}.$$

Also, remember that for any subhorseshoe $\tilde{\Lambda} \subset \Lambda$, being locally maximal, we have 
\begin{equation}\label{omega}
 \bigcup \limits_{y\in \tilde{\Lambda}}W^s(y)=W^s(\tilde{\Lambda})= \{y\in S: \lim \limits_{n \to \infty}d(\varphi^n(y),\tilde{\Lambda})=0\}.   
\end{equation}
In particular, given $x_1\in \mathcal{X}(t,\epsilon)$ we can find $x_2\in \mathcal{X}(t,\epsilon)$ such that $\tilde{\Lambda}_{x_2}$ is a subhorseshoe with $\omega(x)\subset \tilde{\Lambda}_{x_2}$ for every $x\in\tilde{\Lambda}_{x_1}$; and from this and \ref{omega}, it follows that $\ell_{\varphi,f}(x)=\ell_{\varphi,f}(y)$ for some $y\in\tilde{\Lambda}_{x_2}$. 
Using this, we have 
$$\ell_{\varphi,f}(M(t,\epsilon))=\ell_{\varphi,f}(\tilde{M}(t,\epsilon))=\bigcup \limits_{x\in \mathcal{I}(t,\epsilon)} \ell_{\varphi,f}(\tilde{\Lambda}_x)\cup\bigcup \limits_{x\in \mathcal{J}(t,\epsilon)} \ell_{\varphi,f}(\tilde{\Lambda}_x).$$
Moreover, as $k^{-1}_{\varphi,\Lambda,f}(-\infty,t+\epsilon/5)\subset \bigcup\limits_{n\in \mathbb{N}}\psi_u^{-n}(\pi^u(\Lambda_{t+\epsilon/5}))$ (see remark \ref{r1}), we also conclude from (\ref{omega}) that
\begin{equation}\label{kmenos1}
   k^{-1}_{\varphi,\Lambda,f}(t)\subset k^{-1}_{\varphi,\Lambda,f}(-\infty,t+\epsilon/5)\subset \bigcup\limits_{n\in \mathbb{N}}\psi_u^{-n}(\bigcup\limits_{\substack{x\in \mathcal{I}(t,\epsilon)\cup \mathcal{J}(t,\epsilon)}}K^u(\tilde{\Lambda}_x)). 
\end{equation}
We are ready to define the set of nontrivial points in $\mathcal{L}_{\varphi,\Lambda,f}$ such that equation \ref{concentration1} holds:
$$\mathcal{J}^0_{\varphi,\Lambda,f}:=\{t\in \mathcal{L}_{\varphi,\Lambda,f}:\forall \epsilon>0,\ \  (t-\epsilon/4,t+\epsilon/4)\cap \bigcup \limits_{x\in \mathcal{I}(t,\epsilon)} \ell_{\varphi,f}(\tilde{\Lambda}_x)\neq \emptyset \ \ \text{and} \ \ D_{\varphi,\Lambda,f}(t)>0 \}.$$

Observe that, given $t\in \mathcal{L}_{\varphi,\Lambda,f}\setminus \mathcal{J}^0_{\varphi,\Lambda,f}$ with $D_{\varphi,\Lambda,f}(t)>0$, one can find $\epsilon_0>0$ such that 
$$(t-\epsilon_0/4,t+\epsilon_0/4)\cap \bigcup \limits_{x\in \mathcal{I}(t,\epsilon_0)} \ell_{\varphi,f}(\tilde{\Lambda}_x)=\emptyset$$
using (\ref{kmenos1}) and the definition of $k_{\varphi,\Lambda,f}$, we get
$$k^{-1}_{\varphi,\Lambda,f}(t)\subset \bigcup\limits_{n\in \mathbb{N}}\psi_u^{-n}(\bigcup\limits_{\substack{x\in \mathcal{J}(t,\epsilon_0)}}K^u(\tilde{\Lambda}_x)),$$
and then 
$$HD(k^{-1}_{\varphi,\Lambda,f}(t))\leq \max\limits_{x\in\mathcal{J}(t,\epsilon_0)} HD(K^u(\tilde{\Lambda}_x))=\max\limits_{x\in\mathcal{J}(t,\epsilon_0)} \frac{1}{2}HD(\tilde{\Lambda}_x)<\frac{1}{2}R_{\varphi,\Lambda,f}(t)=D_{\varphi,\Lambda,f}(t).$$
We conclude in this case that $D_{\varphi,\Lambda,f}(t)\neq HD(k^{-1}_{\varphi,\Lambda,f}(t)).$

On the other hand, given $t\in \mathcal{J}^0_{\varphi,\Lambda,f}$ and $\epsilon>0$ we can find $x_0\in \mathcal{I}(t,\epsilon)$ and $r_0\in \tilde{\Lambda}_{x_0}$ such that 
$\ell_{\varphi,f}(r_0)\in (t-\epsilon/4,t+\epsilon/4)$. Also, as 
$$\ell_{\varphi,f}(\tilde{\Lambda}_{x_0})\subset f(\tilde{\Lambda}_{x_0})\subset f(M(t,\epsilon))\subset (-\infty,t+\epsilon/2],$$
we can conclude that $t-\epsilon/4<\max f|_{\tilde{\Lambda}_{x_0}}\leq t+\epsilon/2$ and by definition $HD(\tilde{\Lambda}_{x_0})\geq R_{\varphi,\Lambda,f}(t)$.

Define $\mathcal{J}^*_{\varphi,\Lambda,f}=\mathcal{J}^0_{\varphi,\Lambda,f}\setminus \mathcal{F}_{\varphi,\Lambda,f}$, where 
$$\mathcal{F}_{\varphi,\Lambda,f}=\{t\in \mathcal{J}^0_{\varphi,\Lambda,f}:\exists r>0 \ \text{such that}\ (t-r,t)\cap \mathcal{J}^0_{\varphi,\Lambda,f}= \emptyset \}$$
is the enumerable set of points of $\mathcal{J}^0_{\varphi,\Lambda,f}$ isolated on the left. Note that $\mathcal{J}^*_{\varphi,\Lambda,f}\subset \mathcal{L}^{'}_{\varphi,\Lambda,f}$.

Given $t\in \mathcal{J}^*_{\varphi,\Lambda,f}$ we can find a strictly increasing sequence $\{t_n\}_{n\in\mathbb{N}}$ of elements of $\mathcal{J}^0_{\varphi,\Lambda,f}$ converging to $t$ and then, a sequence of subhorseshoes $\{\Lambda_n\}_{n\geq2}$ with the property that
\begin{itemize}
    \item $R_{\varphi,\Lambda,f}(t_{n-1})\leq HD(\Lambda^n)$,
    \item $t_{n-1}<\max f|_{\Lambda^n}<t_n$,
\end{itemize}
Using the arguments of the previous subsections, we can construct a sequence $\{\tilde{\Lambda}^{n_k}\}_{k\in \mathbb{N}}$ of subhorseshoes of $\Lambda$ with the following properties 
\begin{itemize}
    \item $\tilde{\Lambda}^{n_k}\subset \tilde{\Lambda}^{n_{k+1}}$,
    \item $t_{n_{k-1}}<\max f|_{\tilde{\Lambda}^{n_k}}<t_{n_k}$,
    \item $R_{\varphi,\Lambda,f}(t_{n_{k-1}})\leq HD(\tilde{\Lambda}^{n_k})\leq R_{\varphi,\Lambda,f}(t_{n_k})$,
\end{itemize}
and conclude, as before, that in this case $D_{\varphi,\Lambda,f}(t)= HD(k^{-1}_{\varphi,\Lambda,f}(t)).$

Now, the spectral decomposition theorem and the corollary 3.9 of \cite{GC} let us conclude the following proposition
\begin{proposition}
  Given two subhorseshoes $\tilde{\Lambda}_1$ and $\tilde{\Lambda}_2$ of $\Lambda$ such that $\tilde{\Lambda}_1\nsubseteq \tilde{\Lambda}_2$, we have
    $$HD(\tilde{\Lambda}_1)<HD(\tilde{\Lambda}_2).$$  
\end{proposition}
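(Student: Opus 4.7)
The written hypothesis $\tilde{\Lambda}_1\nsubseteq\tilde{\Lambda}_2$ appears to be a typographical slip for the proper containment $\tilde{\Lambda}_1\subsetneq\tilde{\Lambda}_2$ (under the literal reading the conclusion would be broken by symmetry). Under that interpretation, the plan is to transfer the strict inclusion down to the symbolic level on the unstable Cantor sets and invoke the Bowen-style rigidity packaged as Corollary 3.9 of \cite{GC}. First, apply the spectral decomposition theorem to $\tilde{\Lambda}_2$: since $\varphi|_{\tilde{\Lambda}_2}$ is transitive (as $\tilde{\Lambda}_2$ is a subhorseshoe), some iterate $\varphi^N$ decomposes $\tilde{\Lambda}_2$ into finitely many mixing basic sets cyclically permuted by $\varphi$. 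The proper sub-horseshoe $\tilde{\Lambda}_1$ inherits its own spectral decomposition for the same $N$; passing to the mixing piece that realizes the supremum of Hausdorff dimension on each side, one reduces to the case in which both $\tilde{\Lambda}_i$ are mixing horseshoes of the conservative diffeomorphism $\varphi^N$, still with $\tilde{\Lambda}_1\subsetneq\tilde{\Lambda}_2$.

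Next, invoke conservativity in the form \eqref{transient2}: $HD(\tilde{\Lambda}_i)=2\cdot HD(K^u(\tilde{\Lambda}_i))$, so it suffices to prove $HD(K^u(\tilde{\Lambda}_1))<HD(K^u(\tilde{\Lambda}_2))$. Refine the Markov partition of $\Lambda$ so that each $\tilde{\Lambda}_i$ is the maximal invariant set of a finite union of rectangles, with the collection of admissible words defining $\tilde{\Lambda}_1$ being a proper $\sigma$-invariant subfamily of the one defining $\tilde{\Lambda}_2$. This strict symbolic inclusion is exactly what local maximal invariance buys from the strict containment: any point of $\tilde{\Lambda}_2\setminus\tilde{\Lambda}_1$ produces an admissible word that survives refinement and is absent from the subshift of $\tilde{\Lambda}_1$. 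Projecting on the unstable direction gives $C^{1+\alpha}$ dynamically defined Cantor sets $K^u(\tilde{\Lambda}_1)\subsetneq K^u(\tilde{\Lambda}_2)$ governed by the restrictions of $\psi_u$, and the defining expanding dynamics differ on a $\sigma$-invariant transitive piece.

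At this stage Corollary 3.9 of \cite{GC} applies directly: for mixing $C^{1+\alpha}$ dynamically defined Cantor sets, a proper transitive invariant sub-Cantor set has strictly smaller Hausdorff dimension. The mechanism behind that corollary is Bowen's pressure formula, which characterizes $HD(K^u(\tilde{\Lambda}_i))$ as the unique $s$ with $P(-s\log|\psi_u'|)=0$ on the corresponding subshift; strictly enlarging the admissible alphabet while preserving topological mixing strictly increases the topological pressure at each $s$, hence strictly raises its zero. Applying this yields $HD(K^u(\tilde{\Lambda}_1))<HD(K^u(\tilde{\Lambda}_2))$, and doubling via conservativity gives the claimed strict inequality.

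I expect the main obstacle to be the first reduction together with the strictness bookkeeping: one must argue that the proper containment of subhorseshoes is not washed out by the cyclic shuffling of spectral components (so that after restricting to a mixing iterate one still has a \emph{proper} inclusion, not an equality inside one component and mismatched cycles elsewhere), and that the surviving symbolic inclusion continues to satisfy the mixing and transitivity hypotheses required by Corollary 3.9. Once that is handled, Bowen's formula and the doubling identity $HD=2HD(K^u)$ are black-box inputs already used throughout the paper.
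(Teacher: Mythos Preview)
Your reading of the hypothesis as a typo for $\tilde{\Lambda}_1\subsetneq\tilde{\Lambda}_2$ is correct (the application immediately following the proposition confirms this), and your proof is exactly the route the paper indicates: the paper's entire argument is the one-line remark that ``the spectral decomposition theorem and the corollary 3.9 of \cite{GC} let us conclude the following proposition.'' Your proposal simply unpacks those two ingredients in more detail, including the reduction to the mixing case, the doubling identity \eqref{transient2}, and the Bowen pressure mechanism underlying Corollary 3.9.
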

As $\tilde{\Lambda}^{n_k}\nsubseteq \tilde{\Lambda}^{n_{k+1}}$ for $k\in \mathbb{N}$, this proposition let us conclude that
$$R_{\varphi,\Lambda,f}(t_{n_{k-1}})\leq HD(\tilde{\Lambda}^{n_k})<HD(\tilde{\Lambda}^{n_{k+1}})\leq R_{\varphi,\Lambda,f}(t_{n_{k+1}})$$
that is, $D_{\varphi,\Lambda,f}(t_{n_{k-1}})<D_{\varphi,\Lambda,f}(t_{n_{k+1}})$. From this we get that $D_{\varphi,\Lambda,f}|_{\mathcal{J}^*_{\varphi,\Lambda,f}}$ is strictly increasing. Indeed, if $t^{'}\in \mathcal{J}^*_{\varphi,\Lambda,f}$ is less than $t$, consider $k\in\mathbb{N}$ large such that $t^{'}<t_{n_{k-1}}$, therefore $$D_{\varphi,\Lambda,f}(t^{'})\leq D_{\varphi,\Lambda,f}(t_{n_{k-1}})<D_{\varphi,\Lambda,f}(t_{n_{k+1}})\leq D_{\varphi,\Lambda,f}(t).$$

Moreover, in the context of proposition \ref{horseshoes}, given $\eta\in(0,\frac{1}{2}HD(\Lambda)]$ and $\epsilon>0$, there exist some subhorseshoe 
$\tilde{\Lambda}^{\epsilon}\subset M(\eta^-,\epsilon)$ such that $\bigcup\limits_{n\geq0}\tilde{\Lambda}^n\subset \tilde{\Lambda}^{\epsilon}$. As $R_{\varphi,\Lambda,f}(t_n)<HD(\tilde{\Lambda}^n)$, in particular
$$R_{\varphi,\Lambda,f}(\eta^{-})=\sup \limits_{n\geq 0}R_{\varphi,\Lambda,f}(t_n)\leq \sup \limits_{n\geq 0}HD(\tilde{\Lambda}^n)\leq HD(\tilde{\Lambda}^{\epsilon}).$$

Remember that maximums of subhorseshoes are always elements of the Lagrange spectrum which is a closed set (for any subhorseshoe). Then, as $\eta^{-}$ is the limit of the sequence $\{\max f|_{\tilde{\Lambda}^n} \}_{n\in\mathbb{N}}$, one has $\eta^- \in \ell_{\varphi,\Lambda,f}(\tilde{\Lambda}^{\epsilon})$. From this, we conclude that $\mathcal{J}_{\varphi,\Lambda,f}\subset\mathcal{J}^0_{\varphi,\Lambda,f}$  and as  
$\{ D_{\varphi,\Lambda,f}(t_n)^{-} \}_{n\in\mathbb{N}}$ also converges to $\eta^-$, it follows that $\eta^-\notin \mathcal{F}_{\varphi,\Lambda,f}$ and then $\mathcal{J}_{\varphi,\Lambda,f}\subset \mathcal{J}^*_{\varphi,\Lambda,f}$. Finally, as $D_{\varphi,\Lambda,f}(\mathcal{J}_{\varphi,\Lambda,f})=(0,\frac{1}{2}HD(\Lambda)]$ and $D_{\varphi,\Lambda,f}|_{\mathcal{J}^*_{\varphi,\Lambda,f}}$ is injective, we get the equality $\mathcal{J}_{\varphi,\Lambda,f}=\mathcal{J}^*_{\varphi,\Lambda,f}.$ Define also 
$$\tilde{\mathcal{J}}_{\varphi,\Lambda,f}:=\{t\in \mathcal{L}_{\varphi,\Lambda,f}: D_{\varphi,\Lambda,f}(t)>0 \}\setminus \mathcal{J}^0_{\varphi,\Lambda,f}.$$
\subsection{Proof of theorem \ref{T2}}
In this subsection we will work in the setting of Theorem \ref{c1}. Specifically, let $\varphi_0\in \text{Diff}^{2}_{\omega}(S)$ with a mixing horseshoe $\Lambda_0$ with $HD(\Lambda_0)<1$ and $\tilde{\mathcal{U}}\subset \mathcal{U}$ a residual set of some neighbourhood $\mathcal{U}$ of $\varphi_0$ in $\text{Diff}^{2}_{\omega}(S)$ such that $\Lambda_0$ admits a continuation $\Lambda$ with $HD(\Lambda)<1$ for every $\varphi \in \mathcal{U}$ and such that given $\varphi \in \tilde{\mathcal{U}}$ and $f\in\mathcal{R}^r_{\varphi,\Lambda}$, we have the equality $R_{\varphi,\Lambda,f}=L_{\varphi,\Lambda,f}.$

Fix then, $\varphi \in \tilde{\mathcal{U}}$, $f\in\mathcal{R}^r_{\varphi,\Lambda}$ and let $\eta\in (0,\frac{1}{2}HD(\Lambda)]$. By proposition \ref{R} one has
$$\eta^{-}=\min\{t:D_{\varphi,\Lambda,f}(t)=\eta\}=\min\{t:R_{\varphi,\Lambda,f}(t)=2\eta\}=\min\{t:L_{\varphi,\Lambda,f}(t)=2\eta\}.$$
Then, given $\epsilon>0$, because $L_{\varphi,\Lambda,f}(\eta^{-}-\epsilon)<L_{\varphi,\Lambda,f}(\eta^{-})$ we have
\begin{eqnarray*}
  L_{\varphi,\Lambda,f}(\eta^{-}+\epsilon)&=& HD(\mathcal{L}_{\varphi,\Lambda,f}\cap (-\infty,\eta^{-}+\epsilon))\\ &=&
  \max \{HD(\mathcal{L}_{\varphi,\Lambda,f}\cap (-\infty,\eta^{-}-\epsilon]),HD(\mathcal{L}_{\varphi,\Lambda,f}\cap (\eta^{-}-\epsilon,\eta^{-}+\epsilon)) \}\\ &=& \max \{L_{\varphi,\Lambda,f}(\eta^{-}-\epsilon),HD(\mathcal{L}_{\varphi,\Lambda,f}\cap (\eta^{-}-\epsilon,\eta^{-}+\epsilon)) \}\\ &=& HD(\mathcal{L}_{\varphi,\Lambda,f}\cap (\eta^{-}-\epsilon,\eta^{-}+\epsilon)).
\end{eqnarray*}
then, by continuity
$$L_{\varphi,\Lambda,f}(\eta^{-})=\lim\limits_{\epsilon\rightarrow0^+}L_{\varphi,\Lambda,f}(\eta^{-}+\epsilon)=\lim\limits_{\epsilon\rightarrow0^+}HD(\mathcal{L}_{\varphi,\Lambda,f}\cap (\eta^{-}-\epsilon,\eta^{-}+\epsilon))=L^{\text{loc}}_{\varphi,\Lambda,f}(\eta^{-}).$$
That is $L_{\varphi,\Lambda,f}=L^{\text{loc}}_{\varphi,\Lambda,f}$ on $\mathcal{J}_{\varphi,\Lambda,f}$.

On the other hand, given $t\in \tilde{\mathcal{J}}_{\varphi,\Lambda,f}$, one can find $\epsilon_0>0$ such that 
$$(t-\epsilon_0/4,t+\epsilon_0/4)\cap \mathcal{L}_{\varphi,\Lambda,f}\subset \bigcup \limits_{x\in \mathcal{J}(t,\epsilon_0)} \ell_{\varphi,f}(\tilde{\Lambda}_x)$$
and then 
$$HD((t-\epsilon_0/4,t+\epsilon_0/4)\cap \mathcal{L}_{\varphi,\Lambda,f})<HD(\bigcup \limits_{x\in \mathcal{J}(t,\epsilon_0)} \ell_{\varphi,f}(\tilde{\Lambda}_x))\leq HD(\bigcup \limits_{x\in \mathcal{J}(t,\epsilon_0)} \tilde{\Lambda}_x)<R_{\varphi,\Lambda,f}(t),$$
therefore, in this case 
$$L^{\text{loc}}_{\varphi,\Lambda,f}(t)=\lim\limits_{\epsilon\rightarrow0^+}HD(\mathcal{L}_{\varphi,\Lambda,f}\cap (t-\epsilon,t+\epsilon))\leq HD((t-\epsilon_0/4,t+\epsilon_0/4)\cap \mathcal{L}_{\varphi,\Lambda,f})<L_{\varphi,\Lambda,f}(t).$$
This ends the proof of the theorem.
\section{Theorems \ref{T1} and \ref{T2} for the classical Lagrange spectrum}\label{classical}
In this section, we use the dynamical characterization of the classical Lagrange spectrum to prove theorems \ref{T1} and \ref{T2} in this setting. 

Let $N\geq 4$ be an integer. In \cite{LM2} is proved that the portion of the classical Lagrange spectrum $\mathcal{L}$ up to $\sqrt{N^2+4N}$ i.e, $\mathcal{L}\cap (-\infty, \sqrt{N^2+4N}]$ is the dynamically defined Lagrange spectrum $\mathcal{L}_{\varphi.\Lambda(N),f}$ associated with some $\varphi$, $\Lambda(N)$ and $f$.
More specifically, if $C_N=\{x=[0;a_1,a_2,...]: a_i\le N, \forall i\ge 1\}$ and $\tilde{C}_N=\{1,2,...,N\}+C_N$, we set $\Lambda(N)=C_N\times \tilde{C}_N$ and
then consider $\varphi:\Lambda(N) \rightarrow \Lambda(N)$ given by
$$\varphi([0;a_1,a_2,...],[a_0;a_{-1},a_{-2},...])=([0;a_2,a_3,...],[a_1;a_0,a_{-1},...]),$$
that can be extended to a $C^{\infty}$ conservative diffeomorphism on a diffeomorphic copy of the 2-dimensional sphere $\mathbb{S}^2$. Also, the real map is given by $f(x,y)=x+y$. Finally, note that in this context $K^u(\Lambda(N))$ can be identified with $\tilde{C}_N$ and that $k|_{K^u(\Lambda(N))}=k_{\varphi,\Lambda(N),f}.$ 

 \begin{theorem}\label{T3}
   For the continuous and surjective function $D:\mathbb{R} \rightarrow [0,1)$ one can associate a decomposition 
   $$\mathcal{L}\cap (3,\infinity)=\mathcal{J}\cup \mathcal{F}\cup \tilde{\mathcal{J}}$$ 
that satisfies 
\begin{itemize}
    \item $\mathcal{J}\subset \mathcal{L}^{'}$
    \item $D|_{\mathcal{J}}$ is strictly increasing,
    \item $D(\mathcal{J})=(0,1),$
    \item $D(t)=HD(k^{-1}(t))$ for every $t\in \mathcal{J}$,
    \item $\mathcal{F}$ is countable,
    \item $D(t)\neq HD(k^{-1}(t))$ for every $t\in \tilde{\mathcal{J}}$.
\end{itemize}
Indeed, given $\eta\in (0,1)$, define $\eta^-=\min\{t\in \mathbb{R}:D(t)=\eta\}$. Then we can set 
$$\mathcal{J}=\{\eta^-:\eta\in (0,1)\}.$$

\end{theorem}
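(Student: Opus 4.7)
The strategy is to deduce Theorem \ref{T3} from Theorem \ref{T1} applied to the family of mixing horseshoes $\{\Lambda(N)\}_{N\geq 4}$, using the dynamical characterization recalled in the passage: for each $N$, $\mathcal{L}\cap(-\infty,\sqrt{N^2+4N}]=\mathcal{L}_{\varphi,\Lambda(N),f}$ with $\Lambda(N)=C_N\times\tilde{C}_N$, $f(x,y)=x+y$, and $\varphi$ the shift. Since $f$ is linear, one checks that $f\in\mathcal{R}^r_{\varphi,\Lambda(N)}$, so Theorem \ref{T1} yields for each $N$ a decomposition $\mathcal{J}_N\cup\mathcal{F}_N\cup\tilde{\mathcal{J}}_N$ of the positive-dimension part of $\mathcal{L}_{\varphi,\Lambda(N),f}$, together with the identity $\mathcal{J}_N=\{\eta^-_N:\eta\in(0,HD(C_N)]\}$, where $\eta^-_N=\min\{t:D_{\varphi,\Lambda(N),f}(t)=\eta\}$ and $HD(\Lambda(N))/2=HD(C_N)$.

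The first step is to match classical and dynamical invariants below the threshold: for every $t<\sqrt{N^2+4N}$ one has $D(t)=D_{\varphi,\Lambda(N),f}(t)$ and $HD(k^{-1}(t))=HD(k^{-1}_{\varphi,\Lambda(N),f}(t))$. This reduces to showing that the classical level sets $k^{-1}(-\infty,t)$ and $k^{-1}(t)$ are countable unions of iterated Gauss preimages of the corresponding subsets of $\tilde{C}_N\simeq K^u(\Lambda(N))$. Concretely, if $k(\alpha)<t<\sqrt{N^2+4N}$, then eventually every partial quotient window of $\alpha$ produces a value bounded by $t$, forcing the partial quotients from some index onward to be compatible with membership in $\tilde{C}_N$; hence $\alpha\in G^{-m}(k^{-1}_{\varphi,\Lambda(N),f}(-\infty,t))$ for some $m\geq 0$, where $G$ is the Gauss map. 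Bounded distortion of $G$ on each cylinder makes each branch of $G^{-m}$ bi-Lipschitz, so Hausdorff dimension is preserved under this countable union, and similarly for $k^{-1}(t)$.

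The second step glues the decompositions. Because $D$ and $D_{\varphi,\Lambda(N),f}$ agree on $(-\infty,\sqrt{N^2+4N})$, the left-endpoint $\eta^-$ coincides with $\eta^-_N$ for every $\eta\in(0,HD(C_N)]$, and the dynamical decomposition for $\Lambda(N)$ restricted to $(3,\sqrt{N^2+4N})$ embeds consistently into that for $\Lambda(N+1)$. Define
\[
\mathcal{J}=\bigcup_{N\geq 4}\mathcal{J}_N\cap(3,\sqrt{N^2+4N}),\quad \mathcal{F}=\bigcup_{N\geq 4}\mathcal{F}_N\cap(3,\sqrt{N^2+4N}),\quad \tilde{\mathcal{J}}=\bigcup_{N\geq 4}\tilde{\mathcal{J}}_N\cap(3,\sqrt{N^2+4N}).
\]
Each property of the theorem transfers from Theorem \ref{T1}: $\mathcal{J}\subset\mathcal{L}'$ and strict monotonicity of $D|_\mathcal{J}$ follow termwise; $D(\mathcal{J})=(0,1)$ because $D(\mathcal{J}_N)=(0,HD(C_N)]$ and Jarnik's theorem gives $HD(C_N)\nearrow 1$ with the supremum unattained; $\mathcal{F}$ is countable as a countable union of countables; the equality $D(t)=HD(k^{-1}(t))$ on $\mathcal{J}$ and the strict inequality on $\tilde{\mathcal{J}}$ follow from the identification of step one together with the corresponding dynamical statements. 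Finally, the identification $\mathcal{J}=\{\eta^-:\eta\in(0,1)\}$ is immediate from the matching $\eta^-=\eta^-_N$ for $N$ large enough.

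The main technical obstacle is step one, namely controlling what part of the classical Lagrange level set lies outside the dynamical Cantor set. One must verify that an irrational with $k(\alpha)<t<\sqrt{N^2+4N}$ can only have finitely many initial partial quotients exceeding $N$, so that $\alpha$ is forced into one of countably many Gauss preimages of a subset of $\tilde{C}_N$; any extra dimension that irrationals with large early partial quotients might have is killed by this finite-prefix flexibility together with bounded distortion. Once this is in place, the rest of the argument is a direct bookkeeping exercise using Theorem \ref{T1}.
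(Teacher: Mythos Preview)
Your proposal is correct and follows essentially the same route as the paper: reduce to Theorem \ref{T1} applied to the horseshoes $\Lambda(N)$, match $D$ with $D_{\varphi,\Lambda(N),f}$ and $HD(k^{-1}(\cdot))$ with $HD(k^{-1}_{\varphi,\Lambda(N),f}(\cdot))$ below the threshold, and take increasing unions of $\mathcal{J}_N$, $\mathcal{F}_N$, $\tilde{\mathcal{J}}_N$. The paper invokes \cite{M3} for the first matching and says the second ``follows easily arguing as in the proof of Proposition~\ref{R}''; your Gauss-preimage argument makes that step explicit.

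One correction to your sketch of step one: the assertion that $k(\alpha)<t<\sqrt{N^2+4N}$ forces the partial quotients of $\alpha$ to eventually lie in $\{1,\dots,N\}$ is false as stated. From $a_n<f(\sigma^n\theta)<t<N+2$ you only get $a_n\leq N+1$ eventually; for instance $\alpha=[0;\overline{5,3}]$ has $k(\alpha)\approx 5.627<\sqrt{32}$ yet $a_n=5>4$ infinitely often. The fix is immediate and harmless for the union argument: use $N'$ with $t<N'+1$ (equivalently, replace $\tilde{C}_N$ by $\tilde{C}_{N+1}$), which still satisfies $t<\sqrt{(N')^2+4N'}$, so that $k^{-1}(t)\subset\bigcup_m G^{-m}(k^{-1}_{\varphi,\Lambda(N'),f}(t))$ and bounded distortion gives the dimension equality you need.
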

and we also have the 

\begin{theorem}\label{T4}
If $\mathcal{J}$ and $\tilde{\mathcal{J}}$ are as in theorem \ref{T3}, then one has 
\begin{itemize}
    \item $L^{\text{loc}}(t)=L(t)$ for every $t\in \mathcal{J}$,
    \item $L^{\text{loc}}(t)<L(t)$ for every $t\in \tilde{\mathcal{J}}$.
\end{itemize}
\end{theorem}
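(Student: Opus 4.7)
The plan is to replicate the structure of the proof of Theorem \ref{T2} in the classical setting, using the dynamical identification $\mathcal{L}\cap(-\infty,\sqrt{N^2+4N}]=\mathcal{L}_{\varphi,\Lambda(N),f}$ from \cite{LM2} (for the specific $\varphi$, $\Lambda(N)$, and $f(x,y)=x+y$ recalled at the start of this section) together with Moreira's classical identity \eqref{e2}, $L(\eta)=\min\{1,2D(\eta)\}$. Given $t\in\mathcal{J}\cup\tilde{\mathcal{J}}$, first pick $N\ge 4$ large enough that $t+1<\sqrt{N^2+4N}$, so that the classical functions $D$, $L$, $L^{\text{loc}}$ agree with their $\Lambda(N)$-dynamical counterparts in a neighborhood of $t$. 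Since the gradient of $f$ is the constant vector $(1,1)$, we have $f\in\mathcal{R}^r_{\varphi,\Lambda(N)}$, and the constructions developed for Theorems \ref{T1} and \ref{T2} carry over to this setting.

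For $t=\eta^-\in\mathcal{J}$ with $2\eta\le 1$, the identity \eqref{e2} localizes to $L(s)=2D(s)$ near $t$, and strict monotonicity of $D$ on $\mathcal{J}$ (Theorem \ref{T3}) yields $L(t-\epsilon)<L(t)\le L(t+\epsilon)$ for all small $\epsilon>0$. The dimension bookkeeping from the proof of Theorem \ref{T2} then gives $HD(\mathcal{L}\cap(t-\epsilon,t+\epsilon))=L(t+\epsilon)$, and continuity of $L$ forces $L^{\text{loc}}(t)=L(t)$. For $t=\eta^-\in\mathcal{J}$ with $2\eta>1$, where $L\equiv 1$ locally and the monotonicity argument degenerates, I would instead produce dimension $1$ directly in every neighborhood of $t$. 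Applying the construction of subsection \ref{horse} to an increasing sequence $\{t_n\}\subset\mathcal{J}$ with $t_n\nearrow t$ gives mixing subhorseshoes $\tilde{\Lambda}^n\subset\Lambda(N)_{t_n}$ satisfying $HD(\tilde{\Lambda}^n)\ge 2D(t_{n-1})$, which exceeds $1$ for $n$ large. Theorem \ref{formula}, applied to $K^s(\tilde{\Lambda}^n)\times K^u(\tilde{\Lambda}^n)$ (non-essentially affine because these Cantor sets contain Gauss--Cantor sets), then forces $HD(f(\tilde{\Lambda}^n))=1$; combined with the identity $HD(\mathcal{L}\cap I)=HD(\mathcal{M}\cap I)$ on intervals $I$ (implicit in \eqref{e2}), this yields $HD(\mathcal{L}\cap(t-\epsilon,t+\epsilon))=1$ for every $\epsilon>0$, hence $L^{\text{loc}}(t)=1=L(t)$.

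For $t\in\tilde{\mathcal{J}}$, I would mirror the last step of the proof of Theorem \ref{T2}. By definition of $\tilde{\mathcal{J}}$ there exists $\epsilon_0>0$ such that
\[
(t-\epsilon_0/4,t+\epsilon_0/4)\cap\mathcal{L}\subset\bigcup_{x\in\mathcal{J}(t,\epsilon_0)}\ell_{\varphi,f}(\tilde{\Lambda}_x),
\]
with every $HD(\tilde{\Lambda}_x)<R_{\varphi,\Lambda(N),f}(t)=2D(t)$. Bounding $HD(\ell_{\varphi,f}(\tilde{\Lambda}_x))\le\min\{1,HD(\tilde{\Lambda}_x)\}$ via Theorem \ref{formula} and taking the maximum over the finite family then yields $L^{\text{loc}}(t)<L(t)$.

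The main obstacle is the saturated regime $D(t)>1/2$: the direct translation of the Theorem \ref{T2} argument fails there because $L$ is locally constant equal to $1$, so strict monotonicity cannot be used in the $\mathcal{J}$ case and the comparison $\min\{1,HD(\tilde{\Lambda}_x)\}<\min\{1,R(t)\}$ requires more care in the $\tilde{\mathcal{J}}$ case. Both issues are resolved by systematic use of the Marstrand-type Theorem \ref{formula} together with the classical equality $HD(\mathcal{L}\cap I)=HD(\mathcal{M}\cap I)$, which transfers the dimension-boosting behavior of subhorseshoes of dimension greater than $1$ to the Lagrange spectrum.
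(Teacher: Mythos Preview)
Your treatment of the unsaturated regime $2D(t)\le 1$ is essentially the paper's: the same monotonicity argument for $t\in\mathcal{J}$ and the same $\mathcal{J}(t,\epsilon_0)$ decomposition for $t\in\tilde{\mathcal{J}}$.

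The gaps are in the saturated regime $2D(t)>1$, and your proposed resolution does not close them. For $t\in\mathcal{J}$, applying Theorem~\ref{formula} to $K^s(\tilde{\Lambda}^n)\times K^u(\tilde{\Lambda}^n)$ computes $HD(f(\tilde{\Lambda}^n))$, the Hausdorff dimension of the \emph{range} of $f$ on the horseshoe. This set is not a subset of the Markov or Lagrange spectrum, so $HD(f(\tilde{\Lambda}^n))=1$ does not yield $HD(\mathcal{M}\cap(t-\epsilon,t+\epsilon))=1$; and the identity $HD(\mathcal{L}\cap I)=HD(\mathcal{M}\cap I)$ you invoke is only stated in \eqref{e2} for half-lines, not bounded intervals. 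The paper does something substantially more delicate: it builds a word-insertion diffeomorphism $\Phi$ that forces $m_{\varphi,f}(\Phi(x))=f(\varphi^{\tilde{n}}(\Phi(x)))$ on a rectangle $B_{t,\epsilon,\tilde{n}}$, so that $g=f\circ\varphi^{\tilde{n}}\circ\Phi$ actually parametrizes Markov values; Theorem~\ref{formula} then gives $HD(\mathcal{M}\cap(t-\epsilon,t+\epsilon))=1$, and the passage to $\mathcal{L}$ uses the Matheus--Moreira result $HD(\mathcal{M}\setminus\mathcal{L})<1$ from \cite{MM}, not \eqref{e2}.

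For $t\in\tilde{\mathcal{J}}$ with $2D(t)>1$, your bound $HD(\ell_{\varphi,f}(\tilde{\Lambda}_x))\le\min\{1,HD(\tilde{\Lambda}_x)\}$ is useless: the hypothesis $x\in\mathcal{J}(t,\epsilon_0)$ gives only $HD(\tilde{\Lambda}_x)<R_{\varphi,\Lambda(N),f}(t)$, which can exceed $1$, so the minimum equals $1$ and no strict inequality follows. The paper avoids this by replacing the decomposition $\mathcal{I}(t,\epsilon)\cup\mathcal{J}(t,\epsilon)$ with one based on whether a subhorseshoe connects with the fixed orbit $\xi=\overline{(1)}$ before $t+\epsilon$, and then imports from \cite{GC} the absolute bound $HD(\tilde{\Lambda}_j)<0.99$ for every component that does \emph{not} connect with $\xi$. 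A short contradiction argument shows that every $\tilde{\Lambda}_x$ contributing to $\mathcal{L}\cap(t-\tilde{\epsilon}/4,t+\tilde{\epsilon}/4)$ must fail to connect, and the $0.99$ bound gives $L^{\text{loc}}(t)<1=L(t)$. Neither Theorem~\ref{formula} nor \eqref{e2} supplies this absolute bound; it is a separate input that your plan is missing.
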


Note that for $N_1,N_2\geq 4$ integers if $N_1\leq N_2$ then $\Lambda(N_1)\subset \Lambda(N_2)$. Now, in \cite{M3} was proved for $t\in \mathbb{R}$ that, if $N\in \mathbb{N}$ is arbitrary such that $t< \sqrt{N^2+4N}$ then
\begin{equation}\label{e1}
D(t)=HD(k^{-1}(-\infty,t])=HD(D^u(\Lambda(N)_t))=\frac{1}{2}HD(\Lambda(N)_t)=\frac{1}{2}R_{\varphi,\Lambda(N),f}(t)
\end{equation}
and by proposition \ref{R}, we have 
$$D(t)=D_{\varphi,\Lambda(N),f}(t).$$
Arguing as in the proof of proposition \ref{R}, for $t$ and $N$ as before, it follows easily also that 
$$HD(k^{-1}(t))=HD(k_{\varphi,\Lambda(N),f}^{-1}(t)).$$

We are ready to prove Theorem \ref{T3}. Observe that the sequences of sets $\{\mathcal{J}_{\varphi,\Lambda(N),f}\}_{N\geq4}$, $\{\mathcal{F}_{\varphi,\Lambda(N),f}\}_{N\geq4}$ and $\{\tilde{\mathcal{J}} _{\varphi,\Lambda(N),f}\}_{N\geq4}$  are all increasing and that
$$\mathcal{J}=\{\eta^-:\eta\in (0,1)\}=\bigcup\limits_{N\geq4} \mathcal{J}_{\varphi,\Lambda(N),f}.$$
Clearly, $D(\mathcal{J})=D(\{\eta^-:\eta\in (0,1)\})=(0,1)$ and $D|_{\mathcal{J}}$ is strictly increasing. As for $N\geq 4$, $\mathcal{J}_{\varphi,\Lambda(N),f}\subset \mathcal{L}_{\varphi,\Lambda(N),f}^{'}\subset \mathcal{L}^{'}$ then $\mathcal{J}\subset \mathcal{L}^{'}$. Define also  
$$\mathcal{F}:=\bigcup\limits_{N\geq4}\mathcal{F}_{\varphi,\Lambda(N),f}\ \ \text{and} \ \ \tilde{\mathcal{J}}:=\bigcup\limits_{N\geq4}\tilde{\mathcal{J}}_{\varphi,\Lambda(N),f}$$
Therefore, we have

\begin{eqnarray*}
\mathcal{L}\cap (3,\infinity)=\{t\in \mathcal{L}:D(t)>0\}&=&\bigcup\limits_{N\geq4}\{t\in \mathcal{L}_{\varphi,\Lambda(N),f}:D_{\varphi,\Lambda(N),f}(t)>0\}\\&=& \bigcup\limits_{N\geq4}\mathcal{J}_{\varphi,\Lambda(N),f}\cup \mathcal{F}_{\varphi,\Lambda(N),f}\cup \tilde{\mathcal{J}}_{\varphi,\Lambda(N),f}\\ &=&\mathcal{J}\cup \mathcal{F}\cup \tilde{\mathcal{J}}. 
\end{eqnarray*}

Note that $\mathcal{F}$ is countable because $\mathcal{F}_{\varphi,\Lambda(N),f}$ is countable for every $N\geq 4$. That $D(t)=HD(k^{-1}(t))$ for every $t\in \mathcal{J}$ and $D(t)\neq HD(k^{-1}(t))$ for every $t\in \tilde{\mathcal{J}}$ follow from the corresponding properties for $\mathcal{J}_{\varphi,\Lambda(N),f}$ and $\tilde{\mathcal{J}}_{\varphi,\Lambda(N),f}$ and our previous comments on $D$ and $HD(k^{-1}(\cdot))$. This finishes the proof of the theorem \ref{T3}.


For theorem \ref{T4}, a little bit more has to be done. Given $t\in \mathcal{L}$ such that $D(t)>0$, set $\Lambda=\Lambda(N)$ where $N\geq 4$ is fixed such that $t< \sqrt{N^2+4N}$. First, observe that by (\ref{e2}) and (\ref{e1})
$$L_{\varphi,\Lambda,f}(t)=L(t)=\min \{1,2D(t)\}=\min\{1,R_{\varphi,\Lambda,f}(t)\}.$$

We will consider some cases: Suppose that $t<t_1$, where $t_1:=\sup \{s\in \mathbb{R}:L(s)<1\}=3.334384...$ (see \cite{MMPV}). Then, for $\tilde{t}$ close to $t$ one has $L_{\varphi,\Lambda,f}(\tilde{t})=\min\{1,R_{\varphi,\Lambda,f}(\tilde{t})\}=R_{\varphi,\Lambda,f}(\tilde{t})$.

If we suppose also that $t\in \tilde{\mathcal{J}}$, the same proof as before let us conclude that 
$$L^{\text{loc}}(t)=L^{\text{loc}}_{\varphi,\Lambda,f}(t)<R_{\varphi,\Lambda,f}(t)=L_{\varphi,\Lambda,f}(t)=L(t).$$

If $t\in \mathcal{J}$, then 
\begin{eqnarray*}
  t=\min\{\tilde{t}:R_{\varphi,\Lambda,f}(\tilde{t})=L(t)\}&=&\min\{\tilde{t}:\min\{1,R_{\varphi,\Lambda,f}(\tilde{t})\}=L(t)\}\\ &=&\min\{\tilde{t}:L_{\varphi,\Lambda,f}(\tilde{t})= L(t)\} \end{eqnarray*}
and proceeding as in the previous subsection, for small $\epsilon>0$ we conclude that
\begin{eqnarray*}
L(t+\epsilon)=L_{\varphi,\Lambda,f}(t+\epsilon)&=&HD(\mathcal{L}_{\varphi,\Lambda,f}\cap (t-\epsilon,t+\epsilon))\\ &=&HD(\mathcal{L}\cap (t-\epsilon,t+\epsilon))
\end{eqnarray*}
and then
$L(t)=L^{\text{loc}}(t).$ Note that the same argument also works for $t=t_1$ and then one also has $L^{\text{loc}}(t_1)=L(t_1).$

Now, suppose that $t>t_1$ and take $\epsilon>0$ small. If $\tilde{M}(t,\epsilon)$ is as before and $\xi$ is the fixed orbit given by the kneading sequence $(1)_{i\in \mathbb{Z}}$, we can consider the alternative decomposition 

\begin{equation}\label{M}
\tilde{M}(t,\epsilon)=\bigcup\limits_{\substack{x\in \mathcal{X}(t,\epsilon): \ \tilde{\Lambda}_x \ is\\ subhorseshoe }}\tilde{\Lambda}_x= \bigcup \limits_{i\in \tilde{\mathcal{I}}(t,\epsilon)} \tilde{\Lambda}_i \cup \bigcup \limits_{i\in \tilde{\mathcal{J}}(t,\epsilon)} \tilde{\Lambda}_j
\end{equation}
where
 $$\tilde{\mathcal{I}}(t,\epsilon)=\{i\in \mathcal{X}(t,\epsilon): \tilde{\Lambda}_i \ \mbox{is a subhorseshoe and it connects with}\ \xi\ \mbox{before}\ t+\epsilon \}$$
and 
 $$\tilde{\mathcal{J}}(t,\epsilon)=\{j\in \mathcal{X}(t,\epsilon): \tilde{\Lambda}_j \ \mbox{is a subhorseshoe and it does not connect with}\ \xi\ \mbox{before}\ t+\epsilon \}.$$

In \cite{GC} is showed that for any $j\in\tilde{\mathcal{J}}(t,\epsilon)$ one has $HD(\tilde{\Lambda}_j)<0.99$. As, in our present case
$$1=L_{\varphi,\Lambda,f}(t_1)=L_{\varphi,\Lambda,f}(t)=\min\{1,R_{\varphi,\Lambda,f}(t)\},$$
one has $R_{\varphi,\Lambda,f}(t)\geq 1$ and then $HD(\tilde{M}(t,\epsilon))\geq R_{\varphi,\Lambda,f}(t)\geq 1$, that let us conclude that $\mathcal{I}(t,\epsilon)\neq \emptyset$ and $\mathcal{I}(t,\epsilon)\subset \tilde{\mathcal{I}}(t,\epsilon)$. 

Suppose $t\in \tilde{\mathcal{J}}$, then one can find $\epsilon_0>0$ such that for $0<\tilde{\epsilon}\leq \epsilon_0$ one has
$$(t-\tilde{\epsilon}/4,t+\tilde{\epsilon}/4)\cap\bigcup \limits_{i\in \mathcal{I}(t,\tilde{\epsilon})} \ell_{\varphi,f}(\tilde{\Lambda}_i)=\emptyset$$
and 
$$(t-\tilde{\epsilon}/4,t+\tilde{\epsilon}/4)\cap \mathcal{L}= (t-\tilde{\epsilon}/4,t+ \tilde{\epsilon}/4)\cap \mathcal{L}_{\varphi,\Lambda,f}\subset \bigcup \limits_{j\in \mathcal{J}(t,\tilde{\epsilon})} \ell_{\varphi,f}(\tilde{\Lambda}_j).$$


Fix $0<\tilde{\epsilon}<\epsilon_0/4$ and $x\in \mathcal{I}(t,\tilde{\epsilon})$. Consider $j_0\in \mathcal{J}(t,\tilde{\epsilon})$ such that $(t-\tilde{\epsilon}/4,t+\tilde{\epsilon}/4)\cap\ell_{\varphi,f}(\tilde{\Lambda}_{j_0})\neq \emptyset$ and suppose $j_0\in\tilde{\mathcal{I}}(t,\tilde{\epsilon})$. As $\tilde{\Lambda}_{j_0}$ and $\tilde{\Lambda}_x$ are subhorseshoes that connect with $\xi$ before $t+\tilde{\epsilon}$, then by corollary \ref{connection3} there exist some subhorseshoe $\tilde{\Lambda}$ such that $\tilde{\Lambda}_{j_0}\cup\tilde{\Lambda}_x \subset\tilde{\Lambda}\subset \Lambda_{t+\epsilon_0/4}$. But this is a contradiction because in this case, $ HD(\tilde{\Lambda})\geq HD(\tilde{\Lambda}_x)\geq R_{\varphi,\Lambda,f}(t)$ and then, one can find some $i_0\in \mathcal{I}(t,\epsilon_0)$ such that $\tilde{\Lambda}\subset\tilde{\Lambda}_{i_0}$ and

$$\emptyset\neq(t-\tilde{\epsilon}/4,t+ \tilde{\epsilon}/4)\cap \ell_{\varphi,f}(\tilde{\Lambda}_{j_0})\subset(t-\epsilon_0/4,t+ \epsilon_0/4)\cap \ell_{\varphi,f}(\tilde{\Lambda}_{i_0}).$$
It follows that 
$$\bar{\mathcal{J}}(t,\tilde{\epsilon})=\{j\in \mathcal{J}(t,\tilde{\epsilon}):(t-\tilde{\epsilon}/4,t+\tilde{\epsilon}/4)\cap\ell_{\varphi,f}(\tilde{\Lambda}_j)\neq \emptyset\}\subset\tilde{\mathcal{J}}(t,\tilde{\epsilon})$$
and then
$$HD((t-\tilde{\epsilon}/4,t+\tilde{\epsilon}/4)\cap \mathcal{L}_{\varphi,\Lambda,f})<HD(\bigcup \limits_{x\in \bar{\mathcal{J}}(t,\tilde{\epsilon})} \tilde{\Lambda}_x)<0.99$$
that let us conclude that
\begin{eqnarray*}
    L^{\text{loc}}(t)= L^{\text{loc}}_{\varphi,\Lambda,f}(t)&=&\lim\limits_{\epsilon\rightarrow0^+}HD(\mathcal{L}_{\varphi,\Lambda,f}\cap (t-\epsilon,t+\epsilon))\\&\leq& HD((t-\tilde{\epsilon}/4,t+\tilde{\epsilon}/4)\cap \mathcal{L}_{\varphi,\Lambda,f})\\ &<&1=L_{\varphi,\Lambda,f}(t)=L(t).
\end{eqnarray*}
Now, let us come back to (\ref{M}). Using corollary \ref{connection3} at most $\abs{\tilde{\mathcal{I}}(t,\epsilon)}-1$ times, we see that there exists a subhorseshoe $\tilde{\Lambda}(t,\epsilon)\subset \Lambda$ and some $q(t,\epsilon)<t+\epsilon$ such that 
$$\bigcup \limits_{i\in \tilde{\mathcal{I}}(t,\epsilon)} \tilde{\Lambda}_i\subset \tilde{\Lambda}(t,\epsilon)\subset \Lambda_{q(t,\epsilon)}.$$
Note also that
$$R_{\varphi,\Lambda,f}(t+\epsilon/4)\leq HD(M(t,\epsilon))=HD(\tilde{M}(t,\epsilon))\leq HD(\tilde{\Lambda}(t,\epsilon))\leq  R_{\varphi,\Lambda,f}(q(t,\epsilon)).$$
Suppose $t\in \mathcal{J}$, then because $t$ is accumulated on the left by points of $\mathcal{J}$, one has for small $\epsilon$ 
$$1< HD(\tilde{\Lambda}(t-\epsilon,\epsilon/4))\leq R_{\varphi,\Lambda,f}(q(t-\epsilon,\epsilon/4))<R_{\varphi,\Lambda,f}(t)\leq HD(\tilde{\Lambda}(t,\epsilon)).$$
Observe that in this situation 
$$\max f|_{\tilde{\Lambda}(t-\epsilon,\epsilon/4) }<t-3\epsilon/4<t \leq \max f|_{\tilde{\Lambda}(t,\epsilon)}.$$ 

Take some point $x_{\epsilon}\in \tilde{\Lambda}(t,\epsilon)$ such that $f(x_{\epsilon})=\max f|_{\tilde{\Lambda}(t,\epsilon)}$ and $n_0\in \mathbb{N}$ sufficiently large such that for any $x$ and $y$ if their kneading sequences coincide in the central block (centered at the zero position) of size $2n_0+1$ then $\abs{f(x)-f(y)}<\epsilon/4$. If the kneading sequence of $x_{\epsilon}$ is $\{x_n\}_{n\in \mathbb{Z}}$ and $\alpha=(x_{-n_0},\dots,x_0, \dots, x_{n_0})$ then one has 
$$\max f|_{\tilde{\Lambda}(t-\epsilon,\epsilon/4)}< t-3\epsilon/4 < t-\epsilon/4 \leq \min f|_{R(\alpha;0)}.$$ 

Now, as $\tilde{\Lambda}(t,\epsilon)$ is transitive and contains the periodic fixed orbit $\xi$, we can find some $n_1>4n_0+1$ and some words $\alpha_1$ and $\alpha_2$ such that  
$$\tilde{\alpha}=(\tilde{\alpha}_{-n_1},\dots,\tilde{\alpha}_0,\dots, \tilde{\alpha}_{n_1})=(\underbrace{1, \dots ,1}_{n_0+1 \ times},\alpha_1,\alpha,\alpha_2,\underbrace{1, \dots ,1}_{n_0 \ times})$$
appears in the kneading sequence of some point in $\tilde{\Lambda}(t,\epsilon)$ and $\alpha=(\tilde{\alpha}_{-n_0},\dots,\tilde{\alpha}_0,\dots, \tilde{\alpha}_{n_0}).$

Consider the neighborhood $P_{t,\epsilon}=R(\beta;0)$ of $(1)_{i\in \mathbb{Z}}$ where $\beta=(\beta_{-n_0},\dots,\beta_{n_0})=(1,\dots,1)$ and define on $P_{t,\epsilon}$ the diffeomorphism  
$$\Phi([0;a_1,a_2,...],[a_0;a_{-1},a_{-2},...])=([0;\tilde{\alpha}_1,\dots,\tilde{\alpha}_{n_1},a_1,a_2,...],[\tilde{\alpha}_0;\tilde{\alpha}_{-1},\dots,\tilde{\alpha}_{-n_1},a_0,a_{-1},...]),$$
which in sequences corresponds to $(\eta^{-};\eta_0,\eta^{+}) \rightarrow (\eta^{-},\eta_0,\tilde{\alpha},\eta^{+})$, where the zero position of $(\eta^{-},\eta_0,\tilde{\alpha},\eta^{+})$ is the zero position of $\tilde{\alpha}.$

Note that, by construction, for every $x\in P_{t,\epsilon}\cap \tilde{\Lambda}(t-\epsilon,\epsilon/4)$ one has $m_{\varphi,\Lambda,f}(\Phi(x))=f(\varphi^n(\Phi(x)))$, where $n\in \{-n_1,\dots,n_1\}$ corresponds to some position of the word $\tilde{\alpha}$. Write
 $$P_{t,\epsilon}=\bigcup_{n=-n_1}^{n_1} P_{t,\epsilon,n}$$
 where 
 $$P_{t,\epsilon,n}=\{x\in P_{t,\epsilon}\cap \tilde{\Lambda}(t-\epsilon,\epsilon/4):m_{\varphi,\Lambda,f}(\Phi(x))=f(\varphi^n(\Phi(x))) \}$$ 
 is a closed subset and then one can find some $\tilde{n}\in \{-n_1,\dots,n_1\}$ such that $P_{t,\epsilon,\tilde{n}}$ contain the intersection of some rectangle $B_{t,\epsilon,\tilde{n}}$ of some Markov partition of $\tilde{\Lambda}(t-\epsilon,\epsilon/4)$ with $\tilde{\Lambda}(t-\epsilon,\epsilon/4)$. By proposition $1$ of \cite{GCD} we can find some complete subshift 
$\Sigma(\mathcal{B})$, associated to a finite set $\mathcal{B}$ of big words in the alphabet $\mathcal{A}(\tilde{\Lambda}(t-\epsilon,\epsilon/4))$ of $\tilde{\Lambda}(t-\epsilon,\epsilon/4)$, and then in the alphabet $\{1,2,...,N\}$, such that the subhorseshoe $\Lambda(\Sigma(\mathcal{B}))=\Pi^{-1}(\bigcup_{i\in \mathbb{Z}} \sigma ^i(\Sigma(\mathcal{B})))$ determined by $\Sigma(\mathcal{B})$ has Hausdorff dimension close to $HD(\tilde{\Lambda}(t-\epsilon,\epsilon/4))>1$ and is contained in $\tilde{\Lambda}(t-\epsilon,\epsilon/4)$.

As Gauss-Cantor are non-essentially affine and because of the transitivity of $\tilde{\Lambda}(t-\epsilon,\epsilon/4)$, we conclude that  $B_{t,\epsilon,\tilde{n}}\cap \tilde{\Lambda}(t-\epsilon,\epsilon/4)$ contains the product of two non-essentially affine Cantor sets $K_1$ and $K_2$ with $HD(K_1\times K_2)=HD(\Lambda(\Sigma(\mathcal{B})))$. Note also that $g=f\circ\varphi^{\tilde{n}}\circ \Phi$ has non-zero partial derivatives in $P_{t,\epsilon}$ and then we can apply theorem \ref{formula} and conclude that 
$$1=\min \{1, HD(\Lambda(\Sigma(\mathcal{B})))\}=\min \{1, HD(K_1\times K_2) \}=HD(g(K_1\times K_2))$$
therefore 
\begin{eqnarray*}
 1=HD(g(K_1\times K_2))&=&HD(g(B_{t,\epsilon,\tilde{n}}\cap \tilde{\Lambda}(t-\epsilon,\epsilon/4)))\leq HD(m_{\varphi,\Lambda,f}(\Phi(P_{t,\epsilon,\tilde{n}})))\\ &\leq& HD(\mathcal{M}_{\varphi,\Lambda,f}\cap (t-5\epsilon/4,t+5\epsilon/4))\leq 1. \end{eqnarray*}

Finally, as was proved in \cite{MM}, $HD(\mathcal{M}\setminus \mathcal{L})<1$, then one conclude that for small $\epsilon$
$$1=HD(\mathcal{M}_{\varphi,\Lambda,f}\cap (t-\epsilon,t+\epsilon))=\max \{HD(\mathcal{L}_{\varphi,\Lambda,f}\cap (t-\epsilon,t+\epsilon)),HD((\mathcal{M}\setminus \mathcal{L})\cap (t-\epsilon,t+\epsilon))\}$$
$$\ \ \ \  =HD(\mathcal{L}_{\varphi,\Lambda,f}\cap (t-\epsilon,t+\epsilon))$$
and then in this case also holds that 
$$L^{\text{loc}}(t)=L^{\text{loc}}_{\varphi,\Lambda,f}(t)=\lim\limits_{\epsilon\rightarrow0^+}HD(\mathcal{L}_{\varphi,\Lambda,f}\cap (t-\epsilon,t+\epsilon))=1=L_{\varphi,\Lambda,f}(t)=L(t).$$

\begin{corollary}
If $L^{\text{loc}}|_{\mathcal{L}^{'}}$ is non-decreasing, then $\tilde{\mathcal{J}}\cap 
 \mathcal{L}^{'}=\emptyset$.   
\end{corollary}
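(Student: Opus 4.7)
The plan is a short contrapositive argument that exploits the coexistence of $\mathcal{J}$ and $\tilde{\mathcal{J}}$ with the same dimension function $D$ but different local behaviours of $L$. I would argue by contradiction: assume some $t\in\tilde{\mathcal{J}}\cap\mathcal{L}^{'}$ exists, and produce a strictly smaller accumulation point of $\mathcal{L}$ at which $L^{\text{loc}}$ is strictly larger, violating monotonicity.

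First I would set $\eta:=D(t)$. Since $t\in\tilde{\mathcal{J}}\subset\{t\in\mathcal{L}:D(t)>0\}$ and the image $D(\mathbb{R})=[0,1)$, we have $\eta\in(0,1)$. Define $\eta^{-}:=\min\{s\in\mathbb{R}:D(s)=\eta\}$; by the explicit description of $\mathcal{J}$ in Theorem \ref{T3}, $\eta^{-}\in\mathcal{J}$, and in particular $\eta^{-}\in\mathcal{L}^{'}$. Because $\mathcal{J}\cap\tilde{\mathcal{J}}=\emptyset$ (they are distinct pieces of the decomposition) we have $\eta^{-}\neq t$, and the minimality of $\eta^{-}$ among level sets of $D$ forces $\eta^{-}<t$.

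Next I would invoke Moreira's identity $L(s)=\min\{1,2D(s)\}$ recalled in (\ref{e2}) to conclude $L(\eta^{-})=\min\{1,2\eta\}=L(t)$. Combining this with Theorem \ref{T4} applied to the two kinds of points: at $\eta^{-}\in\mathcal{J}$ we get $L^{\text{loc}}(\eta^{-})=L(\eta^{-})$, while at $t\in\tilde{\mathcal{J}}$ we get $L^{\text{loc}}(t)<L(t)$. Stringing these together,
\[
L^{\text{loc}}(\eta^{-}) \;=\; L(\eta^{-}) \;=\; L(t) \;>\; L^{\text{loc}}(t),
\]
with $\eta^{-}<t$ and both points in $\mathcal{L}^{'}$. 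This directly contradicts the hypothesis that $L^{\text{loc}}|_{\mathcal{L}^{'}}$ is non-decreasing, finishing the proof.

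There is no substantial obstacle; all the ingredients are already in place. The only point that deserves care is the disjointness $\mathcal{J}\cap\tilde{\mathcal{J}}=\emptyset$ (to ensure $\eta^{-}\neq t$ strictly), but this is immediate from the definitions $\mathcal{J}=\bigcup_{N}\mathcal{J}_{\varphi,\Lambda(N),f}$ and $\tilde{\mathcal{J}}=\bigcup_{N}\tilde{\mathcal{J}}_{\varphi,\Lambda(N),f}$ together with the way $\tilde{\mathcal{J}}_{\varphi,\Lambda(N),f}$ was defined as the complement of $\mathcal{J}^{0}_{\varphi,\Lambda(N),f}\supset\mathcal{J}_{\varphi,\Lambda(N),f}$ in $\{D_{\varphi,\Lambda(N),f}>0\}$.
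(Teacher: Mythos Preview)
Your proof is correct and follows essentially the same route as the paper's own argument: take $t\in\tilde{\mathcal{J}}\cap\mathcal{L}'$, pass to $\eta^{-}=D(t)^{-}\in\mathcal{J}\subset\mathcal{L}'$, use $L=\min\{1,2D\}$ to get $L(\eta^{-})=L(t)$, and then apply Theorem~\ref{T4} at both points to obtain $L^{\text{loc}}(\eta^{-})=L(\eta^{-})=L(t)>L^{\text{loc}}(t)$ with $\eta^{-}<t$. The only extra content you supply is the explicit justification that $\eta^{-}\neq t$ via disjointness of $\mathcal{J}$ and $\tilde{\mathcal{J}}$; this is fine, though the quickest way to see it is directly from Theorem~\ref{T3}: points of $\mathcal{J}$ satisfy $D(t)=HD(k^{-1}(t))$ while points of $\tilde{\mathcal{J}}$ satisfy the strict inequality.
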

\begin{proof}
 
Suppose $t\in \tilde{\mathcal{J}}\cap 
 \mathcal{L}^{'}$. In particular,
$$D(t)^-=\min\{s\in \mathbb{R}:D(s)=D(t)\}<t$$
and
$$L(D(t)^-)=\min\{1,2D(D(t)^-)\}=\min\{1,2D(t)\}=L(t).$$
Finally, it follows from theorem \ref{T4} that 
$$L^{\text{loc}}(t)<L(t)=L(D(t)^-)=L^{\text{loc}}(D(t)^-).$$
\end{proof}

It is convenient to point here that Theorem \ref{T3} does not imply the main theorem of \cite{GC} because our result does not allow us to conclude, for example, that $D|_{T}$ is injective, where $T$ is the interior of the Lagrange spectrum. On the other hand, as $\max f|_{\Lambda(4)}=\sqrt{32}=5.65685$ and $(5,\infty)\subset T$ (see \cite{F}), one has
$$\mathcal{F}=\bigcup\limits_{N\geq4}\mathcal{F}_{\varphi,\Lambda(N),f}=\mathcal{F}_{\varphi,\Lambda(4),f}.$$

\subsection*{Acknowledgements:} I would like to thank Davi Lima and Carlos Moreira for helpful comments and suggestions which substantially improved this work.

\end{document}